\renewcommand\theequation{\thesection.\arabic{equation}}
\newcommand{\BA}{{\mathbb {A}}}
\newcommand{\BC}{{\mathbb {C}}}
\newcommand{\CA}{{\mathcal {A}}}
\newcommand{\CB}{{\mathcal {B}}}
\newcommand{\CE}{{\mathcal {E}}}
\newcommand{\CH}{{\mathcal {H}}}
\newcommand{\CI}{{\mathcal {I}}}
\newcommand{\CJ}{{\mathcal {J}}}
\newcommand{\CL}{{\mathcal {L}}}
\newcommand{\CN}{{\mathcal {N}}}
\newcommand{\CP}{{\mathcal {P}}}
\newcommand{\CS}{{\mathcal {S}}}
\newcommand{\CU}{{\mathcal {U}}}
\newcommand{\CY}{{\mathcal {Y}}}
\newcommand{\CZ}{{\mathcal {Z}}}
\newcommand{\Fo}{{\mathfrak {o}}}
\newcommand{\RU}{{\mathrm {U}}}
\newcommand{\GL}{{\mathrm{GL}}}
\newcommand{\Hom}{{\mathrm{Hom}}}
\newcommand{\Ind}{{\mathrm{Ind}}}
\newcommand{\ind}{{\mathrm{ind}}}
\newcommand{\I}{{\mathrm{I}}}
\renewcommand{\Re}{{\mathrm{Re}}}
\newcommand{\Res}{{\mathrm{Res}}}
\newcommand{\SO}{{\mathrm{SO}}}
\newcommand{\Sym}{{\mathrm{Sym}}}
\newcommand{\sgn}{{\mathrm{sgn}}}
\newcommand{\Span}{{\mathrm{Span}}}
\newcommand{\tr}{{\mathrm{tr}}}
\newcommand{\ud}{\,\mathrm{d}}
\newcommand{\wi}{{\mathrm{Witt}}}
\newcommand{\udl}{\underline}
\newcommand{\apair}[1]{\left\langle {#1} \right\rangle}
\newcommand{\bpair}[1]{\left[{#1}\right]}
\newcommand{\cpair}[1]{\left\{{#1}\right\}}
\newcommand{\ppair}[1]{\left( {#1} \right)}
\def\bks{{\backslash}}
\def\diag{{\rm diag}}
\def\eps{{\epsilon}}
\def\lam{{\lambda}}
\def\Lam{{\Lambda}}
\def\ome{{\omega}}
\def\sig{{\sigma}}
\newtheorem{thm}{Theorem}[section]
\newtheorem{cor}[thm]{Corollary}
\newtheorem{lem}[thm]{Lemma}
\newtheorem{prop}[thm]{Proposition}
\newtheorem {ques/conj}[thm]{Question/Conjecture}
\newcommand{\Rmnum}[1]{\expandafter\@slowromancap\romannumeral #1@}
\begin{document}
\renewcommand{\theequation}{\arabic{equation}}
\numberwithin{equation}{section}

\title[$L$-functions of Classical Groups of Hermitian Type]
{A Product of Tensor Product $L$-functions of Quasi-split Classical Groups of Hermitian Type}

\author{Dihua Jiang}
\address{School of Mathematics\\
University of Minnesota\\
Minneapolis, MN 55455, USA}
\email{dhjiang@math.umn.edu}

\author{Lei Zhang}
\address{Department of Mathematics,\\
Boston College, Chestnut Hill, MA 02467, USA}
\email{lei.zhang.2@bc.edu}

\subjclass[2010]{Primary 11F70, 22E50; Secondary 11F85, 22E55}

\date{March, 2013}


\keywords{Bessel Periods of Eisenstein Series, Global Zeta Integrals, Tensor Product $L$-functions, Classical Groups of Hermitian Type}

\thanks{The work of the first named author is supported in part by the NSF Grant DMS--1001672}

\begin{abstract}
A family of global integrals representing a product of tensor product (partial) $L$-functions:
$$
L^S(s,\pi\times\tau_1)L^S(s,\pi\times\tau_2)\cdots L^S(s,\pi\times\tau_r)
$$
are established in this paper, where $\pi$ is an irreducible cuspidal automorphic representation of a quasi-split classical group
of Hermitian type and $\tau_1,\cdots,\tau_r$ are irreducible unitary cuspidal automorphic representations of $\GL_{a_1},\cdots,\GL_{a_r}$,
respectively. When $r=1$ and the classical group is an orthogonal group, this was studied by Ginzburg, Piatetski-Shapiro and Rallis
in 1997 and when $\pi$ is generic and $\tau_1,\cdots,\tau_r$ are not isomorphic to each other, this is considered by Ginzburg, Rallis and Soudry
in 2011. In this paper, we prove that the global integrals
are eulerian and finish the explicit calculation of unramified local $L$-factors in general. The remaining local and global theory
for this family of global integrals will be considered in our future work.
\end{abstract}

\maketitle


\section{Introduction}

Let $F$ be a number field and $E$ be a quadratic extension of $F$ when we discuss unitary groups and $E$ be equal to $F$ when we
discuss orthogonal groups. Let $G_n$ be a quasi-split group, which is either $\RU_{n,n}$, $\RU_{n+1,n}$, $\SO_{2n+1}$, or
$\SO_{2n}$, defined over $F$. Let $\BA_E$ be the ring of adeles of $E$ and $\BA$ be the ring of adeles of $F$.
Take $\tau$ to be an irreducible generic automorphic representation of $\Res_{E/F}(\GL_a)(\BA)=\GL_a(\BA_E)$ of isobaric type,
i.e.
\begin{equation}
\tau=\tau_1\boxplus\tau_2\boxplus\cdots\boxplus\tau_r
\end{equation}
where $a=\sum_{i=1}^ra_i$ is a partition of $a$ and $\tau_i$ is an irreducible unitary cuspidal automorphic representation of
$\GL_{a_i}(\BA_E)$. Let $\pi$ be an irreducible cuspidal automorphic
representation of $G_n(\BA)$. We consider a family of global zeta integrals (see Section 3 for definition),
which represent the family of the tensor product (partial)
$L$-functions  $L^S(s,\pi\times\tau)$, which is expressed as follows:
\begin{equation}
L^S(s,\pi\times\tau)=L^S(s,\pi\times\tau_1)L^S(s,\pi\times\tau_2)\cdots L^S(s,\pi\times\tau_r).
\end{equation}
It is often
interesting and important in Number Theory and Arithmetic to consider certain simultaneous behavior at a certain given point $s=s_0$.
For instance, the nonvanishing at $s=\frac{1}{2}$, the center of the symmetry of the $L$-functions
$L^S(s,\pi\times\tau_1)$, $L^S(s,\pi\times\tau_2)$, $\cdots$, $L^S(s,\pi\times\tau_r)$, or particularly, taking
$\tau_1=\tau_2=\cdots=\tau_r$, which yields the $r$-th power $L^S(s,\pi\times\tau_1)^r$ for all positive integers $r$. As remarked
at the end of this paper, the arguments and the methods still work if one replaces the single variable $s$ by multi-variable
$(s_1,\cdots,s_r)$. However, we focus on the case of single variable $s$ in this paper.

We use a family of the Bessel periods (discussed in Section 2) to define the family of global zeta integrals, following closely
the formulation of Ginzburg, Piatetski-Shapiro and Rallis in \cite{GPSR97}, where the case when $r=1$ and $G_n$ is an orthogonal group
was considered. When $\pi$ is generic, i.e. has a nonzero Whittaker-Fourier coefficient, and $\tau_1,\cdots,\tau_r$ are not isomorphic to each other,
this family of tensor
product $L$-functions were studied by Ginzburg, Rallis and Soudry in their recent book \cite{GRS11}. However, the global integrals studied in
\cite{GRS11} are formulated in a different way and can not cover the general situation considered in this paper, 
while the global zeta integrals here are the most general version of
this kind, the origin of which goes back to the pioneer work of Piatetski-Shapiro and Rallis and of Gelbart and Piatetski-Shapiro (\cite{GPSR87}).
Some more special cases were studied earlier by various authors and we refer to the relevant discussions in \cite{GPSR97} and \cite{GRS11}.

In addition to the potential application towards the simultaneous nonvanishing of the central values of the tensor product $L$-functions,
the basic relation between the product of the tensor product (partial) $L$-functions and the family of global zeta integrals is also
an important ingredient in the proof of the nonvanishing of the certain explicit constructions of endoscopy correspondences as indicated
for some special cases in the work of Ginzburg in \cite{G08}, and as generally formulated in the work of the first name author in
(\cite{J11} and \cite{J12}). We will come back to this topic in our future work (\cite{JZ13}).

In general, the meromorphic continuation to the whole complex plane of the product of the tensor product (partial) $L$-functions is
known from the work of R. Langlands on the explicit calculation of the constant terms of Eisenstein series (\cite{L71}). However, when
$\pi$ is not generic, i.e. has no nonzero Whittaker-Fourier coefficients, the Langlands conjecture on the standard functional equation
and the finite number of poles for $\Re(s)\geq\frac{1}{2}$ is still not known (\cite{Sh10}).

According to the recent work of Arthur (\cite{Ar12}) and also of C.-P. Mok (\cite{Mk12}), any irreducible cuspidal automorphic representation 
$\pi$ of $G_n(\BA)$ has the Arthur-Langlands transfer $\pi_\psi$ from $G_n$ to the corresponding general linear group, which is 
compatible with the corresponding local Langlands functorial transfers at all unramified local places of $\pi$. 
Hence we have an identity for partial $L$-functions
$$
L^S(s,\pi\times\tau)=L^S(s,\pi_\psi\times\tau).
$$
The partial $L$-function on the right hand side is the Rankin-Selberg convolution $L$-function for general linear groups studied by 
Jacquet, Piatetski-Shapiro and Shalika (\cite{JPSS83}). When $\pi$ has a generic global Arthur parameter, 
the Arthur-Langlands transfer from $G_n$ to general linear groups is compatible with the corresponding local Langlands functorial 
transfer at all local places. Hence one may define the complete tensor product $L$-function by 
$$
L(s,\pi\times\tau):=L(s,\pi_\psi\times\tau),
$$
just as in (\cite{Ar12} and \cite{Mk12}). 

However, when the global Arthur parameter $\psi$ is not generic, there exists irreducible cuspidal automorphic representation $\pi$ 
with Arthur parameter $\psi$, but there exists a ramified local place $\nu$ such that $\pi_\nu$ and $(\pi_\psi)_\nu$ are 
{\it not} related by the local Langlands functorial transfer at $\nu$. Hence it is impossible to define the local 
tensor product $L$-factors (and also $\gamma$-factors and $\epsilon$-factors) of the pair $(\pi_\nu,\tau_\nu)$ in terms of those of the pair 
$((\pi_\psi)_\nu,\tau_\nu)$. Therefore, it is still an {\it open problem} to define the local ramified $L$-factors 
(and also $\gamma$-factors and $\epsilon$-factors) for an irreducible cuspidal automorphic representation $\pi$ of $G_n(\BA)$ when 
$\pi$ has a non-generic global Arthur parameter. At this point, it seems that the integral representation of Rankin-Selberg type for 
automorphic $L$-functions is the only available method to attack this open problem. 

For quasi-split classical groups of skew-Hermitian type, 
some preliminary work has been done in \cite{GJRS11}, using Fourier-Jacobi periods. Further work is in progress,
including the work of X. Shen in his PhD thesis in University of Minnesota, 2013, which has produced two preprints \cite{Sn12} and \cite{Sn13}.
A parallel theory for this case will also be considered in future.

In Section 2, we introduce a family of Eisenstein series, which is needed for the construction of the family of global zeta integrals,
and discuss the family of Bessel periods which are needed to formulate
the family of global zeta integrals. In Section 3, we unfold the global zeta integrals and prove that they are eulerian.
In Section 4, we do the explicit calculations for local zeta integrals with unramified data to produce the unramified
local $L$-factors of the tensor product type, following the main arguments, which can be viewed as natural extension of those used 
in \cite{GPSR97}. The main global result is Theorem 4.12, which
is started at the end of Section 4. The ideas and methods used in the proofs in this paper will be described with more
details in each section, which are essentially the extension of those used in \cite{GPSR97} for the orthogonal group $G_n$ and
with $r=1$ to the generality considered in this paper.

\section{Certain Eisenstein series and Bessel periods}

We introduce a family of Eisenstein series which will be used in the definition of a family of global zeta integrals, representing
the family of the product of the tensor product $L$-functions as discussed in the introduction. The global zeta integrals are
basically a family of Bessel periods of those Eisenstein series. We recall first the general notion of
the Bessel periods of automorphic forms from \cite{GPSR97}, \cite{GJR09}, \cite{BS09} and \cite{GRS11}.

Let $F$ be an number field. Define $E=F$ or $E=F(\sqrt{\rho})$, a quadratic extension of $F$, depending on that the classical group
we considered is orthogonal or unitary, accordingly.  It follows that the Galois group of $E/F$ is
either trivial or generated by non-trivial automorphism $x\mapsto \bar{x}$.
The ring of adeles of $F$ is denoted by $\BA$, while the ring of adeles of $E$ is denoted by $\BA_{E}$.

Let $V$ be a $E$-vector space of dimension $m$ with a non-degenerate quadratic form $q_{_{V}}$ if $E=F$ or a non-degenerate
Hermitian form (also denoted by $q_{_{V}}$) if $E=F(\sqrt{\rho})$.
Let $\RU(q_{_V})$ be the connected component of isometry group of $(V,q_{_V})$ defined over $F$. It follows that $\RU(q_{_V})$ is a
special orthogonal group or a unitary group. Let $\tilde{m}=\wi(V)$ be the Witt index of $V$.
Let $V^{+}$ be a maximal totally isotropic subspace of $V$ and $V^-$ be its dual, so that $V$ has the following
polar decomposition
$$
V=V^+\oplus V_0\oplus V^-,
$$
where $V_{0}=(V^{+}\oplus V^{-})^{\perp}$ denotes the anisotropic kernel of $V$.
We choose a basis $\cpair{e_{1},e_{2},\dots,e_{\tilde{m}}}$ of $V^{+}$ and a basis
$\cpair{e_{-1},e_{-2},\dots,e_{-\tilde{m}}}$ of $V^-$ such that $q_{_V}(e_{i},e_{-j})=\delta_{i,j}$
for all $1\leq i, j\leq \tilde{m}$.

We assume in this paper that the algebraic $F$-group $\RU(q_{_V})$ is $F$-quasi-split.
Then the anisotropic kernel $V_{0}$ is at most two dimensional. More precisely, when $E=F$, if $\dim_EV=m$ is even, then
$\dim_EV_0$ is either $0$ or $2$, and if $\dim_EV=m$ is odd, then $\dim_EV_0$ is $1$; and
when $E=F(\sqrt{\rho})$, $\dim_EV_0$ is $0$ or $1$ according to that $\dim_EV=m$ is even or odd.

When $\dim V_{0}=2$, we choose an orthogonal basis $\{e^{(1)}_{0},e^{(2)}_{0}\}$ of $V_{0}$ with the property that
$$
q_{_{V_0}}(e^{(1)}_{0},e^{(1)}_{0})=1,\qquad q_{_{V_0}}(e^{(2)}_{0},e^{(2)}_{0})=-c,
$$
where $c\in F^{\times}$ is not a square and $q_{_{V_0}}=q_{_V}|_{_{V_0}}$. When $\dim V_{0}=1$, we choose an anisotropic basis vector $e_{0}$
for $V_{0}$. We put the basis in the following order
\begin{equation}
e_{1},e_{2},\dots,e_{\tilde{m}},e^{(1)}_{0},e^{(2)}_{0},e_{-\tilde{m}},\cdots,e_{-2},e_{-1}
\end{equation}
if $\dim_EV_0=2$;
\begin{equation}
e_{1},e_{2},\dots,e_{\tilde{m}},e_{0},e_{-\tilde{m}},\cdots,e_{-2},e_{-1}
\end{equation}
if $\dim_EV_0=1$; and
\begin{equation}
e_{1},e_{2},\dots,e_{\tilde{m}},e_{-\tilde{m}},\cdots,e_{-2},e_{-1}
\end{equation}
if $\dim_EV_0=0$.

With the choice of the ordering of the basis vectors, the $F$-rational points $\RU(q_{_V})(F)$ of the algebraic group
$\RU(q_{_V})$ are realized as an algebraic subgroup of $\GL_m(E)$. Define $n=[\frac{m}{2}]$ and put $G_n=\RU(q_{_V})$, which is
the same as given in the introduction.
From now on, for any $F$-algebraic subgroup $H$ of $G_n$, the $F$-rational points $H(F)$ of $H$ are regarded as a
subgroup of $\GL_m(E)$. Similarly, the $\BA$-rational points $H(\BA)$ of $H$ are regarded as a subgroup of $\GL_m(\BA_E)$.

The corresponding standard flag of $V$ (with respect to the given ordering of the basis vectors) defines an $F$-Borel subgroup $B$.
We may write $B=TN$ with $T$ a maximal $F$-torus, whose elements are diagonal matrices, and with $N$ the unipotent radical of $B$,
whose elements are upper-triangular matrices.
Let $T_0$ be the maximal $F$-split torus of $G_n$ contained in $T$. We define the root system $\Phi(T_0,G_n)$ with the set of positive roots $\Phi^+(T_0,G_n)$
corresponding to the Borel subgroup given above.

Let $V^{+}_{\ell}$ be the totally isotropic subspace generated by $\cpair{e_{1},e_{2},\dots,e_{\ell}}$ and $P_{\ell}=M_{\ell}U_{\ell}$
be a standard maximal parabolic subgroup of $G_n$, which stabilizes $V^{+}_{\ell}$. The Levi subgroup $M_{\ell}$ is isomorphic to
$
\GL(V_{\ell}^+)\times G_{n-\ell}.
$
Here $\GL(V_{\ell}^+)=\Res_{E/F}(\GL_{\ell})$ and $G_{n-\ell}=\RU(q_{_{W_{\ell}}})$ with $q_{_{W_{\ell}}}=q_{_V}|_{_{W_{\ell}}}$
and
$
W_\ell=(V^{+}_{\ell}\oplus V^{-}_{\ell})^{\perp}.
$

Let $\udl{\ell}:=[\ell_{1}\ell_{2}\dots\ell_{p}]$ be a partition of $\ell$. Then
$
P_{\udl{\ell}}=M_{\udl{\ell}}U_{\udl{\ell}}
$
is a standard parabolic subgroup of $\Res_{E/F}\GL_{\ell}$, whose Levi subgroup
$$
M_{\udl{\ell}}\cong\Res_{E/F}\GL_{\ell_{1}}\times \Res_{E/F}\GL_{\ell_{2}}\times\cdots\times\Res_{E/F}\GL_{\ell_{p}}.
$$
\subsection{Bessel periods}
Define $N_{\ell}$ to be the unipotent subgroup of $G_n$ consisting of elements of following type,
\begin{equation}\label{nell}
N_{\ell}=\cpair{n=\begin{pmatrix}z&y&x\\&I_{m-2\ell}&y'\\&&z^{*} \end{pmatrix}\in G_n \mid z\in Z_{\ell}},
\end{equation}
where $Z_{\ell}$ is the standard maximal (upper-triangular) unipotent subgroup of $\Res_{E/F}\GL_{\ell}$. It is clear that
$N_\ell=U_{[1^\ell]}$ where $[1^\ell]$ is the partition of $\ell$ with $1$ repeated $\ell$ times.

Fix a nontrivial character $\psi_{0}$ of $F\bks \BA_{F}$ and define a character $\psi$ of $E\bks \BA_{E}$ by
\begin{equation}\label{psiE}
\psi(x):=
\begin{cases}
\psi_0(x) &\ \textit{if}\ E=F;\\
\psi_{0}(\frac{1}{2}\tr_{E/F}(\frac{x}{\sqrt{\rho}}) &\ \textit{if}\ E=F(\sqrt{\rho}).
\end{cases}
\end{equation}
Then take $w_{0}$ to be an anisotropic vector in $W_{\ell}$ and define a character $\psi_{\ell,w_{0}}$ of $N_{\ell}$ by
\begin{equation}\label{chw0}
\psi_{\ell,w_{0}}(n):=\psi(\sum^{\ell-1}_{i=1}z_{i,i+1}+q_{_{W_\ell}}(y_{\ell}, w_{0})),
\end{equation}
where $y_\ell$ is the last row of $y$ in $n\in N_\ell$ as defined in \eqref{nell}, which is regarded as a vector in $W_{\ell}$.

If $\ell=\tilde{m}$, $\psi_{\ell,w_{0}}$ is a generic character on the maximal unipotent group $N=N_{\tilde{m}}$.
We will not consider this case here and hence we always assume that $\ell<\tilde{m}$ from now on.

For $\kappa\in F^{\times}$, we choose
\begin{equation}\label{w0}
w_{0}=y_{\kappa}=e_{\tilde{m}}+(-1)^{m+1}\frac{\kappa}{2}e_{-\tilde{m}},
\end{equation}
which implies that $q(y_{\kappa},y_{\kappa})=(-1)^{m+1}\kappa$ and that the corresponding character is
\begin{equation}\label{chalpha}
\psi_{\ell,\kappa}(n)=\psi_{\ell,w_{0}}(n)=
\psi(\sum^{\ell-1}_{i=1}z_{i,i+1}+y_{\ell,\tilde{m}-\ell}+(-1)^{m+1}\frac{\kappa}{2}y_{\ell,m-\tilde{m}-\ell+1}).
\end{equation}
The Levi subgroup $M_{[1^\ell]}=(\Res_{E/F}\GL_{1})^{\times\ell}\times G_{n-\ell}$ normalizes the unipotent subgroup $N_{\ell}$
by the adjoint action, and acts on the set of the characters $\psi_{\ell,\kappa}$, with $\kappa\in F^{\times}$, of $N_\ell(F)$.
The $M_{[1^\ell]}(F)$-orbits are classified by the Witt Theorem and give all $F$-generic characters of $N_\ell(F)$.
The stabilizer of $\psi_{\ell,w_{0}}$ in the Levi subgroup $M_{[1^\ell]}$ is the subgroup
\begin{equation}\label{Lellw0}
L_{\ell,w_{0}}=
\cpair{\begin{pmatrix} I_{\ell}&&\\&\gamma&\\&&I_{\ell} \end{pmatrix}\in G_n\mid \gamma J_{m-2\ell}w_{0}=J_{m-2\ell}w_{0}}\cong H_{n-\ell},
\end{equation}
where $H_{n-\ell}$ is defined to be $\RU(q_{_{W_{\ell}\cap w^{\perp}_{0}}})$ with
$q_{_{W_{\ell}\cap w^{\perp}_{0}}}=q_{_V}|_{_{W_{\ell}\cap w^{\perp}_{0}}}$, and $J_{k}$ is the $k\times k$ matrix defined inductively by
$J_{k}=\ppair{\begin{smallmatrix} &1\\J_{k-1}&\end{smallmatrix}}$ and $J_{1}=1$.
Define
\begin{equation}
R_{\ell,w_{0}}:=H_{n-\ell}N_\ell=\RU(q_{_{W_{\ell}\cap w^{\perp}_{0}}})N_{\ell}.
\end{equation}
Note that $\dim_EV$ and $\dim_EW_{\ell}\cap w^{\perp}_{0}$ have the different parity.
If $\ell=0$, the unipotent subgroup $N_{0}$ is trivial and we have that
$$
R_{0,w_{0}}=\RU(q_{_{V\cap w_{0}^{\perp}}}).
$$
When taking $w_{0}=y_{\kappa}$,  we will use the notation $\psi_{\ell,y_\kappa}=\psi_{\ell,\kappa}$, $L_{\ell, y_{\kappa}}=L_{\ell,\kappa}$
and $R_{\ell,y_{\kappa}}=R_{\ell,\kappa}$, respectively.

Let $\phi$ be an automorphic form on $G_n(\BA)$. Define the {\bf Bessel-Fourier coefficient} (or Gelfand-Graev model)
of $\phi$ by
\begin{equation}\label{bfc}
\CB^{\psi_{\ell,w_{0}}}(\phi)(h)
:=
\int_{N_\ell(F)\bks N_\ell(\BA)}\phi(nh)\psi_{\ell,w_{0}}^{-1}(n) \ud n.
\end{equation}
This defines an automorphic function on the stabilizer $L_{\ell,w_{0}}(\BA)=H_{n-\ell}(\BA)$. Take a cuspidal automorphic form
$\varphi$ on $H_{n-\ell}(\BA)$ and define the $(\psi_{\ell,w_{0}},\varphi)$-{\bf Bessel period} or simply {\bf Bessel period}
of $\phi$ by
\begin{equation}\label{bp}
\CP^{\psi_{\ell,w_{0}}}(\phi,\varphi)
:=
\int_{H_{n-\ell}(F)\bks H_{n-\ell}(\BA)}\CB^{\psi_{\ell,w_{0}}}(\phi)(h)\varphi(h) \ud h.
\end{equation}
We will apply this Bessel period to a family of Eisenstein series next.

\subsection{Eisenstein series}
We follow the notation of \cite{MW95} to define a family of Eisenstein series. Let $P_j=M_jU_j$ be a standard maximal parabolic
$F$-subgroup of $G_n$ with the Levi subgroup
$$
M_j=\Res_{E/F}(\GL_j)\times G_{n-j},
$$
for some $j$ with $1\leq j\leq \tilde{m}$. When $j=\tilde{m}$, the group $G_{n-\tilde{m}}$ disappears, if $\dim_EV_0=0$, or $\dim_EV_0=1$ and $E=F$.
Following \cite[Page 5]{MW95}, the space $X_{M_j}$ of all continuous homomorphisms from $M_j(\BA)$ to $\BC^\times$,
which is trivial on $M_j(\BA)^1$, can be
identified with $\BC$ by the mapping $\lambda_s\leftrightarrow s$, which is normalized as in \cite{Sh10}.

Let $\tau$ be an irreducible unitary generic automorphic representation
of $\GL_j(\BA_E)$ of the following isobaric type:
\begin{equation}\label{tau}
\tau=\tau_1\boxplus\tau_2\boxplus\cdots\boxplus\tau_r,
\end{equation}
where $\udl{j}=[j_1j_2\cdots j_r]$ is a partition of $j$ and $\tau_i$ is an irreducible unitary cuspidal automorphic
representation of $\GL_{j_i}(\BA_E)$.
Let $\sigma$ be an irreducible automorphic representation of $G_{n-j}(\BA)$, which may not be cuspidal. Note that
$\sigma$ is irrelevant if $j=\tilde{m}$ and the group $G_{n-\tilde{m}}$ disappears. Following the
definition of automorphic forms in \cite[I.2.17]{MW95}, take an automorphic form
\begin{equation}\label{af}
\phi=\phi_{\tau\otimes\sigma}\in \CA(U_j(\BA)M_j(F)\bks G_n(\BA))_{\tau\otimes\sigma}.
\end{equation}
For $\lam_s\in X_{M_{j}}$, the Eisenstein series associated to $\phi(g)$ is defined by
\begin{equation}\label{es}
E(\phi,s)(g)=E(\phi_{\tau\otimes\sigma}, \lam_s)(g)=\sum_{\delta\in P_{j}(F)\bks G_n(F)}\lam_s \phi(\delta g).
\end{equation}
It is absolutely convergent for $\Re(s)$ large and uniformly converges for $g$ over any compact subset of $G_n(\BA)$,
has meromorphic continuation to $s\in\BC$ and satisfies the standard functional equation.

Recall that $H_{n-\ell}$ is defined to be $\RU(q_{_{W_{\ell}\cap w^{\perp}_{0}}})$ and that
$\dim_EV$ and $\dim_EW_{\ell}\cap w^{\perp}_{0}$ have the different parity.
Let $\pi$ be an irreducible \emph{cuspidal} automorphic representation of $H_{n-\ell}(\BA)$
and take a cuspidal automorphic form
\begin{equation}\label{caf}
\varphi\in \CA_0(H_{n-\ell}(F)\bks H_{n-\ell}(\BA))_{\pi}.
\end{equation}

The global zeta integral $\CZ(s,\phi_{\tau\otimes\sigma},\varphi_\pi,\psi_{\ell,w_{0}})$ is defined to be the following
Bessel period
\begin{equation}\label{gzi}
\CZ(s,\varphi_\pi,\phi_{\tau\otimes\sigma},\psi_{\ell,w_{0}})
:=
\CP^{\psi_{\ell,w_{0}}}(E(\phi_{\tau\otimes\sigma},s),\varphi_\pi).
\end{equation}
Because $\varphi_\pi$ is cuspidal, it is easy to see that the following holds.

\begin{prop}\label{pp}
The global zeta integral $\CZ(s,\phi_{\tau\otimes\sigma},\varphi_\pi,\psi_{\ell,w_{0}})$
converges absolutely at any $s\in\BC$ when the Eisenstein series $E(\phi_{\tau\otimes\sigma},s)$ has no pole at $s$
and hence is holomorphic, and has possible poles at the locations where the Eisenstein series has poles.
\end{prop}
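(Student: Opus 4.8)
The plan is to reduce the assertion to two standard facts: the inner unipotent integral runs over a compact domain, and cusp forms on a reductive group are rapidly decreasing. First I would note that, at any $s$ where $E(\phi_{\tau\otimes\sigma},s)$ is holomorphic, the function $g\mapsto E(\phi_{\tau\otimes\sigma},s)(g)$ is a smooth automorphic form on $G_n(\BA)$ of moderate growth, with the moderate–growth estimate holding uniformly as $s$ ranges over a compact subset of the holomorphy domain; this is part of the basic theory of Eisenstein series as in \cite{MW95}. Since $N_\ell$ is unipotent, the quotient $N_\ell(F)\bks N_\ell(\BA)$ is compact, so for each $h\in H_{n-\ell}(\BA)$ the integral \eqref{bfc} defining $\CB^{\psi_{\ell,w_{0}}}(E(\phi_{\tau\otimes\sigma},s))(h)$ is that of a continuous function over a fixed compact set; it therefore converges absolutely and defines a smooth function on $H_{n-\ell}(\BA)$ that is left–invariant under $H_{n-\ell}(F)$. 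Because integration over a fixed compact set preserves moderate growth (and does so uniformly in $s$ on compacta of the holomorphy domain), $\CB^{\psi_{\ell,w_{0}}}(E(\phi_{\tau\otimes\sigma},s))$ is an automorphic function of moderate growth on $H_{n-\ell}(F)\bks H_{n-\ell}(\BA)$.

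Next I would invoke cuspidality of $\varphi_\pi$: since $H_{n-\ell}=\RU(q_{_{W_{\ell}\cap w^{\perp}_{0}}})$ is reductive and $\pi$ is cuspidal, $\varphi_\pi$ is rapidly decreasing on $H_{n-\ell}(F)\bks H_{n-\ell}(\BA)$. The product of a function of moderate growth with a rapidly decreasing function is rapidly decreasing, and $H_{n-\ell}(F)\bks H_{n-\ell}(\BA)$ has finite volume; hence the outer integral \eqref{bp} defining $\CP^{\psi_{\ell,w_{0}}}(E(\phi_{\tau\otimes\sigma},s),\varphi_\pi)=\CZ(s,\varphi_\pi,\phi_{\tau\otimes\sigma},\psi_{\ell,w_{0}})$ converges absolutely at every $s$ where $E(\phi_{\tau\otimes\sigma},s)$ is holomorphic.

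For the holomorphy and the location of poles, I would fix a relatively compact open subset $\Omega$ of the holomorphy domain of $E(\phi_{\tau\otimes\sigma},s)$. On $\Omega$ the moderate–growth bound for $\CB^{\psi_{\ell,w_{0}}}(E(\phi_{\tau\otimes\sigma},s))(h)$ is uniform in $s$, so the integrand in \eqref{bp} is dominated, locally uniformly in $s\in\Omega$, by a rapidly decreasing function of $h$ independent of $s$; since $s\mapsto E(\phi_{\tau\otimes\sigma},s)(g)$ is holomorphic on $\Omega$ for each $g$, Fubini together with Morera's theorem shows that $s\mapsto\CZ(s,\varphi_\pi,\phi_{\tau\otimes\sigma},\psi_{\ell,w_{0}})$ is holomorphic on $\Omega$. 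Finally, if $s_0$ is a pole of $E(\phi_{\tau\otimes\sigma},s)$ of order at most $k$, then $(s-s_0)^k E(\phi_{\tau\otimes\sigma},s)$ is a holomorphic family near $s_0$, and applying the previous argument to this family gives that $(s-s_0)^k\CZ(s,\varphi_\pi,\phi_{\tau\otimes\sigma},\psi_{\ell,w_{0}})$ is holomorphic near $s_0$; hence $\CZ$ has at worst poles at the locations of the poles of the Eisenstein series.

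The only point requiring a bit of care — and the one I expect to be the main (minor) obstacle — is the assertion that the moderate–growth estimate for the Bessel–Fourier coefficient is uniform in $s$ on compacta. This follows from the corresponding uniform moderate–growth property of the Eisenstein series combined with the compactness of $N_\ell(F)\bks N_\ell(\BA)$, but it is precisely what makes the absolute convergence, the holomorphy, and the order–of–pole bound rigorous rather than merely formal.
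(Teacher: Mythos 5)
Your argument is correct and is exactly the standard reasoning the paper has in mind: the paper offers no written proof beyond the remark that cuspidality of $\varphi_\pi$ makes the claim easy, and your proof spells that out via compactness of $N_\ell(F)\bks N_\ell(\BA)$, moderate growth of the Eisenstein series off its poles (uniform on compacta in $s$), and rapid decay of the cusp form on the finite-volume quotient. The only added content is your careful treatment of holomorphy and pole order via Morera and the factor $(s-s_0)^k$, which is a legitimate filling-in of details the paper leaves implicit.
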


\section{The eulerian property of the global integrals}\label{sec:eulerian}

We prove here that the global zeta integral $\CZ(s,\phi_{\tau\otimes\sigma},\varphi_\pi,\psi_{\ell,w_{0}})$ will be expressed as
an eulerian product of local zeta integrals. When $j=n=[\frac{m}{2}]$, such global zeta integrals with generic $\pi$ have been studied in
\cite[Chapter 10]{GRS11}. Hence we assume from now on that $j<n$ and also $\ell<\tilde{m}\leq n$.

Recall from \eqref{gzi} that $\CZ(s,\phi_{\tau\otimes\sigma},\varphi_\pi,\psi_{\ell,w_{0}})$ is
the $(\psi_{\ell,w_{0}},\varphi_\pi)$-{\bf Bessel period} of the Eisenstein series $E(\phi_{\tau\otimes\sigma}, \lam_s)(g)$, which is
given by
\begin{equation}\label{bpes}
\int_{H_{n-\ell}(F)\bks H_{n-\ell}(\BA)}\CB^{\psi_{\ell,w_{0}}}(E(\phi_{\tau\otimes\sigma},s))(h)\varphi_\pi(h)\ud h,
\end{equation}
where the Bessel-Fourier coefficient $\CB^{\psi_{\ell,w_{0}}}(E(\phi_{\tau\otimes\sigma},s))(h)$ is given, as in \eqref{bfc}, by
\begin{equation}\label{bfces}
\CB^{\psi_{\ell,w_{0}}}(E(\phi_{\tau\otimes\sigma},s))(h)
:=
\int_{N_\ell(F)\bks N_\ell(\BA)}E(\phi_{\tau\otimes\sigma},s)(nh)\psi_{\ell,w_{0}}^{-1}(n)\ud n.
\end{equation}
We first calculate the Bessel-Fourier coefficient $\CB^{\psi_{\ell,w_{0}}}(E(\phi_{\tau\otimes\sigma},s))$.

\subsection{Calculation of Bessel-Fourier coefficients}
In order to calculate the Bessel-Fourier coefficient $\CB^{\psi_{\ell,w_{0}}}(E(\phi_{\tau\otimes\sigma},s))$, i.e.
the integral in \eqref{bfces}, we assume that the $\Re(s)$ is large, and unfold the Eisenstein series. This leads to consider
the double cosets decomposition $P_{j}\bks G_n /P_{\ell}$, whose set of representatives $\epsilon_{\alpha,\beta}$ is explicitly given
in \cite[Section 4.2]{GRS11}. In our situation, we put it into three cases for discussion.

{\bf Case (1):}\ $G_n$ is not the $F$-split even special orthogonal group. In this case, the set of representatives $\epsilon_{\alpha,\beta}$
of the double coset decomposition $P_{j}\bks G_n /P_{\ell}$
is in bijection with the set of pairs of nonnegative integers
\begin{equation}
 \cpair{(\alpha,\beta)\mid 0\leq \alpha\leq \beta\leq j \text{ and } j\leq \ell+\beta-\alpha\leq \tilde{m}}.
\end{equation}
Recall that $\tilde{m}$ is the Witt index of $(V,q_{_V})$ defining $G_n$.

{\bf Case (2-1):}\ $G_n$ is the $F$-split even special orthogonal group and $\ell+\beta-\alpha<\tilde{m}=n$. In this case,
the set of representatives $\epsilon_{\alpha,\beta}$
of the double coset decomposition $P_{j}\bks G_n /P_{\ell}$
is in bijection with the set of pairs of nonnegative integers
\begin{equation}
 \cpair{(\alpha,\beta)\mid 0\leq \alpha\leq \beta\leq j \text{ and } j\leq \ell+\beta-\alpha\leq\min\{n-1,\tilde{m}\}}.
\end{equation}

{\bf Case (2-2):}\ $G_n$ is the $F$-split even special orthogonal group and $\ell+\beta-\alpha=n$. In this case,
there are two double cosets corresponding to each pair $(\alpha,\beta)$, and hence
we may choose representatives $\eps_{\alpha,\beta}$ and $\tilde{\eps}_{\alpha,\beta}=w_{q}\eps_{\alpha,\beta}w_{q}$ of the two double cosets
corresponding to such pairs $(\alpha,\beta)$.

In all cases, we denote by
$
P^{\epsilon_{\alpha,\beta}}_{\ell}:=\epsilon_{\alpha,\beta}^{-1}P_{j}\epsilon_{\alpha,\beta}\cap P_{\ell}
$
the stabilizer in $P_{\ell}$, whose elements have the following form as matrices in $\GL_m(E)$:
\begin{equation} \label{eq:stable-l}
 g^{(\alpha,\beta)}_{\ell}=\left( \begin{array}{ccccccccc}
a & x_1 & x_2 & y_1 & y_2 & y_3 & z_1 & z_2 & z_3 \\
0 & b & x_3 & 0 & y_4 & y_5 & 0 & z_4 & z_2' \\
0 &  0 & c & 0 & 0 & y_6 & 0 & 0 & z_1' \\
   &     &     & d & u & v & y_6' & y_5' & y_3' \\
   &     &     & 0 & e & u' & 0 & y_4' & y_2'\\
   &     &     & 0 & 0 & d^* & 0 & 0 & y_1'\\
   &     &     &    &     &     & c^* & x_3' & x_2' \\
    &     &     &    &     &     & 0 & b^* & x_1'\\
     &     &     &    &     &     & 0 & 0 & a^*
\end{array} \right)
\end{equation}
where the block sizes are determined by $a, a^*\in\GL_{\alpha}$, $b, b^*\in\GL_{\ell-\alpha-j+\beta}$, $c, c^*\in\GL_{j-\beta}$, $d, d^*\in\GL_{\beta-\alpha}$, and
$e\in\GL_{m-2(\ell+\beta-\alpha)}$. In case $i=0$, $\GL_{i}$ disappears.

The stabilizer in $P_{j}$ consists of elements of the following form, which are the indicated matrices conjugated by $w^{t}_{q}$:
\begin{equation}\label{eq:stable-j}
g^{(\alpha,\beta)}_{j}=\epsilon_{\alpha,\beta}g \epsilon^{-1}_{\alpha,\beta} = \left( \begin{array}{ccccccccc}
a & y_1 &  z_1 & x_1 & y_2 & z_2 & x_2 & y_3 & z_3 \\
0 & d & y_6' & 0 & u & y_5' & 0 & v & y_3' \\
0 &  0 & c^* & 0 & 0 &  x_3' & 0 & 0 & x_2' \\
   &     &     & b & y_4 & z_4 & x_3 & y_5 & z_2' \\
   &     &     & 0 & e & y_4' & 0 & u' & y_2'\\
   &     &     & 0 & 0 & b^* & 0 & 0 & x_1'\\
   &     &     &    &     &     & c & y_6 & z_1' \\
    &     &     &    &     &     & 0 & d^* & y_1'\\
     &     &     &    &     &     & 0 & 0 & a^*
\end{array} \right)^{w_q^t}
\end{equation}
with the block sizes as before and $w^{t}_{q}$ being the $t$-th power of the element $w_{q}$ for $t=j-\beta$.
Also, when $(V,q_{_V})$ is Hermitian, $w_{q}=I_{m}$; when $E=F$ and $(V,q_{_V})$ is of odd dimension, $w_{q}=-I_{m}$;
when $E=F$ and anisotropic kernel $(V_0,q_{_{V_0}})$ is of dimension two,
take $w_{q}=\diag(I_{\tilde{m}},w^{0}_{q},I_{\tilde{m}})$,
where $w^{0}_{q}=\diag\{1,-1\}$;
and finally, when $E=F$ and the anisotropic kernel $(V_0,q_{_{V_0}})$ is a zero space,
take $w_{q}=\diag(I_{\tilde{m}-1},w^{0}_{q},I_{\tilde{m}-1})$,
where $w^{0}_{q}=\begin{pmatrix} &1\\1&\end{pmatrix}$.
Note that $\ell, j<n=[\frac{m}{2}]$, where $m=\dim_EV$ and $\tilde{m}$ is the Witt index of $V$.

In {\bf Case (2-2)}, i.e. $G_n$ is the $F$-split even special orthogonal group and $\ell+\beta-\alpha=\tilde{m}$,
we have two double cosets corresponding to each pair $(\alpha,\beta)$.
For the double coset $P_{j}\eps_{\alpha,\beta}P_{\ell}$,  we get exactly the same form for the stabilizer as above.
For the other double coset $P_{j}\tilde{\eps}_{\alpha,\beta}P_{\ell}$, the stabilizer in $P_{\ell}$ consists of all elements of the form $(g^{(\alpha,\beta)}_{\ell})^{w_{q}}$.

To continue the calculation, we consider further double cosets decomposition $P^{\eps_{\alpha,\beta}}_{\ell}\bks P_{\ell}/ R_{\ell,w_{0}}$.
Recall that $H_{n-\ell}=\RU(q_{_{W_{\ell}\cap w^{\perp}_{0}}})$,
$\dim_EV$ and $\dim_EW_{\ell}\cap w^{\perp}_{0}$ have the different parity, and
$R_{\ell,w_0}=H_{n-\ell}N_\ell$ with $H_{n-\ell}\cong L_{\ell,w_0}$.
By \cite[Section 5.1]{GRS11}, we may choose a set of representatives of form:
\begin{equation}\label{eta}
\eta_{\eps,\gamma}:=\begin{pmatrix} \epsilon&&\\&\gamma&\\&&\epsilon^{*} \end{pmatrix}
\end{equation}
where $\eps$ is a representative in the quotient of Weyl groups
$$
W_{\GL_{\alpha}\times\GL_{\ell-\alpha-t}\times\GL_{t}}\bks W_{\GL_{\ell}}
$$
and $\gamma$ is a representative $P'_{w}\bks G_{n-\ell}/ H_{n-\ell}$, where
$P'_{w}$ is the maximal parabolic subgroup of $G_{n-\ell}$ defined as follows.

In {\bf Case (1)} or {\bf Case (2-1)}, \label{page:hgamma} i.e. when
$G_n$ is not the $F$-split even special orthogonal group or when $G_n$ is the $F$-split even special orthogonal group
with $\ell+\beta-\alpha<n$, then $P'_{w}$ is the parabolic subgroup of $G_{n-\ell}$, which preserves
the standard $\beta-\alpha$ dimensional totally isotropic subspace $V^{+}_{\ell,\beta-\alpha}$ of $W_{\ell}$, where
\begin{equation}\label{vpm}
V^{\pm}_{\ell,f}={\rm Span}_{E}\cpair{e_{\pm(\ell+1)},\dots,e_{\pm(\ell+f)}},
\end{equation}
for a possible integer $f$.

In {\bf Case (2-2)}, i.e. when $G_n$ is the $F$-split even special orthogonal group with $\ell+\beta-\alpha=n$ (with $j,\ell<n$), then,
when $w=\eps_{\alpha,\beta}$, $P'_{w}$ is the parabolic subgroup
of $G_{n-\ell}$, which preserves $V^{+}_{\ell,m-\ell}$; and when $w=\tilde{\eps}_{\alpha,\beta}$, $P'_{w}$ is the parabolic subgroup of
$G_{n-\ell}$, which preserves $w_{q}V^{+}_{\ell,m-\ell}$.

Denote the stabilizer in $H_{n-\ell}$ of the double coset $P'_w\gamma H_{n-\ell}$ with $\eta_{\eps,\gamma}$ as defined in \eqref{eta} by
\begin{equation}\label{hgamma}
H_{n-\ell}^{\eta_{\eps,\gamma}}=H_{n-\ell}^{\gamma}=H_{n-\ell}\cap\gamma^{-1}P'_{w}\gamma
=L_{\ell,w_{0}}\cap\gamma^{-1} P'_{w}\gamma.
\end{equation}

With the above preparation, we are ready to calculate the Bessel-Fourier coefficient
$\CB^{\psi_{\ell,w_{0}}}(E(\phi_{\tau\otimes\sigma},\lam))(h)$ by assuming that $\Re(s)$ is large so that we are able to unfold the Eisenstein series.
\begin{align*}
&\CB^{\psi_{\ell,w_{0}}}(E(\phi,s))(h)\\
=&\int_{N_{\ell}(F)\bks N_{\ell}(\BA)} E(\phi,s)(nh)\psi^{-1}_{\ell,w_{0}}(n)\ud n\\
=&\sum_{\epsilon_{\alpha,\beta}\in \CE_{j,\ell}}\int_{N_{\ell}(F)\bks N_{\ell}(\BA)}\sum_{\delta\in P^{\epsilon_{\alpha,\beta}}_{\ell}(F)\bks P_{\ell}(F)}\lam\phi(\epsilon_{\alpha,\beta}\delta nh)\psi^{-1}_{\ell,w_{0}}(n)\ud n,
\end{align*}
where $\CE_{j,\ell}$ is the set of representatives of $P_{j}(F)\bks G_n(F)/P_{\ell}(F)$. Set $\CN_{\alpha,\beta,\ell,w_0}$ to be the set
of representatives of $P^{\epsilon_{\alpha,\beta}}_{\ell}(F)\bks P_{\ell}(F)/ R_{\ell,w_{0}}(F)$ and deduce that the above is equal to
$$
\sum_{\epsilon_{\alpha,\beta}}\sum_{\eta\in \CN_{\alpha,\beta,\ell,w_0}}
\int_{N_{\ell}(F)\bks N_{\ell}(\BA)}\sum_{\delta\in R^{\eta}_{\ell,w_{0}}(F)\bks R_{\ell,w_{0}}(F)}\lam\phi(\epsilon_{\alpha,\beta}\eta\delta nh)\psi^{-1}_{\ell,w_{0}}(n)\ud n.
$$
Since $R_{\ell,w_0}=H_{n-\ell}N_\ell$, by re-arranging the summation in $\delta$ and the integration of $dn$,
we obtain that the above is equal to
$$
\sum_{\epsilon_{\alpha,\beta}}\sum_{\eta} \sum_{\delta\in H^{\eta}_{n-\ell}(F)\bks H_{n-\ell}(F)}\int_{N^{\eta}_{\ell}(F)\bks N_{\ell}(\BA)}\lam\phi(\epsilon_{\alpha,\beta}\eta\delta nh)\psi^{-1}_{\ell,w_{0}}(n)\ud n.
$$
By factoring the integration of ${\rm d}n$, we obtain that when $\Re(s)$ is large, the Bessel-Fourier coefficient
$\CB^{\psi_{\ell,w_{0}}}(E(\phi_{\tau\otimes\sigma},s))(h)$ is equal to
\begin{equation}\label{rs}
\sum_{\epsilon_{\alpha,\beta};\eta; \delta }
\int_{N^{\eta}_{\ell}(\BA)\bks N_{\ell}(\BA)}\int_{N^{\eta}_{\ell}(F)\bks N^{\eta}_{\ell}(\BA)}\lam\phi(\epsilon_{\alpha,\beta}\eta\delta unh)\psi^{-1}_{\ell,w_{0}}(un)\ud u\ud n.
\end{equation}

In order to determine the summands in \eqref{rs}, we need the following two lemmas, which are the global version of Propositions 5.1 and 5.2 in \cite[Chapter 5]{GRS11}.

\begin{lem}\label{lm:r>0}
If $\alpha>0$, then the inner integral in \eqref{rs} has the following property:
$$
\int_{N^{\eta}_{\ell}(F)\bks N^{\eta}_{\ell}(\BA)}\lam\phi(\epsilon_{\alpha,\beta}\eta unh)\psi^{-1}_{\ell,w_{0}}(un)\ud u=0
$$
for all choices of data.
\end{lem}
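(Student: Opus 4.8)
The plan is to follow the unfolding method of \cite[Chapter 5]{GRS11}: conjugate the integrand in the inner integral of \eqref{rs} by $\gamma_0:=\epsilon_{\alpha,\beta}\eta$, and exploit the left $U_j(\BA)$-invariance of $\lam\phi$ together with the cuspidality of the data $\tau_1,\dots,\tau_r$ defining $\tau$. First I would use the explicit matrix descriptions \eqref{eq:stable-l} and \eqref{eq:stable-j} of $P^{\epsilon_{\alpha,\beta}}_\ell$ and of the stabilizer in $P_j$ to pin down $N^\eta_\ell$ and, more importantly, its conjugate $\gamma_0 N^\eta_\ell\gamma_0^{-1}$ inside $P_j$. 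One writes this conjugate as a product $(\gamma_0 N^\eta_\ell\gamma_0^{-1}\cap U_j)\cdot\mathcal Z$, where $\mathcal Z$ maps isomorphically onto a unipotent subgroup $U_{M_j}$ of the Levi $M_j=\Res_{E/F}(\GL_j)\times G_{n-j}$ — in fact onto a unipotent subgroup of the $\GL_j$-factor — and tracks how $\psi_{\ell,w_0}$ transports under this conjugation to a character of $\gamma_0 N^\eta_\ell\gamma_0^{-1}$, recalling that $\psi_{\ell,w_0}$ is supported only on the super-diagonal entries $z_{i,i+1}$ of $Z_\ell$ and on the $w_0$-pairing with the last row of the $y$-block.

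The hypothesis $\alpha>0$ enters at this point: when $\alpha>0$ the block $a\in\GL_\alpha$ in \eqref{eq:stable-l} forces $\gamma_0$ to move the isotropic flags so that a nontrivial unipotent piece of $N^\eta_\ell$ is conjugated into $U_j(\BA)$. I would then isolate a unipotent $F$-subgroup $\mathcal Y\subseteq N^\eta_\ell$ with $\gamma_0\mathcal Y(\BA)\gamma_0^{-1}\subseteq U_j(\BA)$ and distinguish two cases. If $\psi_{\ell,w_0}$ restricts nontrivially to $\mathcal Y$, the argument closes at once: since $\lam\phi$ is left $U_j(\BA)$-invariant, $\lam\phi(\gamma_0 u n h)=\lam\phi(\gamma_0 n h)$ for $u\in\mathcal Y(\BA)$, so the inner integral contains the factor $\int_{\mathcal Y(F)\bks\mathcal Y(\BA)}\psi^{-1}_{\ell,w_0}(u)\,\ud u=0$. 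If instead the character is trivial on the $U_j$-part of $N^\eta_\ell$, then that part contributes only a volume and the inner integral reduces to a Fourier coefficient of the automorphic form $m\mapsto\lam\phi(m\gamma_0 nh)$ in the space of $\tau\otimes\sigma$, taken along $U_{M_j}$ against the transported character; since $\alpha>0$ this $U_{M_j}$ contains the unipotent radical $U_Q$ of a proper parabolic $Q$ of a $\GL$-factor on which the transported character is trivial and which is incompatible with the cuspidal support of the relevant $\tau_i$, so the coefficient vanishes by cuspidality of the $\tau_i$. In borderline configurations one may first have to perform an exchange of roots, the global analogue of the corresponding step in \cite[Chapter 5]{GRS11}, to reach one of these two situations. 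In all cases the conclusion holds identically in $s$, in $h$, and in the choice of $\phi$ and $\eta$.

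The main obstacle I anticipate is the combinatorial bookkeeping needed to carry out the identification of $N^\eta_\ell$, its $U_j$-part, and the transported character uniformly across the cases listed before the lemma: the Hermitian case, the odd special orthogonal case, and the split and non-split even special orthogonal cases. In \eqref{eq:stable-j} the stabilizer in $P_j$ is the indicated matrix conjugated by $w^t_q$, whose shape changes from case to case, and in Case (2-2) one must also handle the second double coset, whose stabilizer in $P_\ell$ is $(g^{(\alpha,\beta)}_\ell)^{w_q}$. Keeping the matching between the $z_{i,i+1}$-directions of the character and the root directions that fall into $U_j$ straight in each of these cases — and thereby verifying that $\alpha>0$ always triggers one of the two vanishing mechanisms — is where the real work lies; everything else is a routine consequence of the (absolutely convergent, for $\Re(s)$ large) manipulations already used to arrive at \eqref{rs}.
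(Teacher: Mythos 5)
Your first vanishing mechanism is exactly the one the paper uses, and in fact it is the \emph{only} mechanism needed: in both cases of the paper's proof one exhibits a unipotent $F$-subgroup of $N_{\ell}$ whose conjugate by $\epsilon_{\alpha,\beta}\eta_{\eps,\gamma}$ lies inside $U_{j}$ and on which $\psi_{\ell,w_{0}}$ is nontrivial, so that the left $U_{j}(\BA)$-invariance of $\lam\phi$ produces an inner integral $\int\psi^{-1}_{\ell,w_{0}}=0$. Concretely, if some simple root subgroup $U$ of $Z_{\ell}$ is conjugated by $\eps$ into $U_{\alpha,\ell-\alpha-t,t}$, one uses $U$ itself (the character is nontrivial on every simple root direction of $Z_{\ell}$); otherwise, by \cite[Lemma 5.1]{GRS11} one may take $\eps$ to be the explicit block permutation, and then one uses a subgroup $S\subset N_{\ell}$ supported on the $y$-rows of the $\alpha$-block, whose conjugate lies in $U_{j}$ by \eqref{eq:stable-j}; the character $\psi_{\ell,w_{0}}$ is nontrivial on $S(\BA)$ because $w_{0}$ is anisotropic, hence not orthogonal to $V_{0}\oplus V^{-}_{\ell,\beta-\alpha}$. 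This last step is precisely where $\alpha>0$ enters, and it closes the proof without ever leaving the first mechanism.

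Your fallback mechanism --- reducing, when the transported character is trivial on the $U_{j}$-part, to a Fourier coefficient along a unipotent subgroup of the $\GL_{j}$-factor of $M_{j}$ and invoking ``cuspidality of the $\tau_{i}$'' --- is a genuine gap in this paper's generality. Here $\tau=\tau_{1}\boxplus\cdots\boxplus\tau_{r}$ is only isobaric, not cuspidal (the $\tau_{i}$ may even all coincide), so a coefficient along the unipotent radical of a proper parabolic of $\GL_{j}$ with trivial character is a constant term of an Eisenstein/isobaric automorphic form and does \emph{not} vanish in general; the claim that the parabolic arising is ``incompatible with the cuspidal support of the relevant $\tau_{i}$'' is exactly what would have to be proved, and it is false for generic choices of the data (this is the very obstruction that prevents the argument of \cite{GRS11}, where the $\tau_{i}$ are pairwise non-isomorphic and $\pi$ is generic, from covering the present situation). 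To repair the proof you should eliminate the second case altogether, as the paper does: show that whenever no simple root subgroup of $Z_{\ell}$ falls into $U_{j}$, the normalization of $\eps$ and the anisotropy of $w_{0}$ guarantee a subgroup of $N_{\ell}$ (built from the $y$-block rows indexed by the $\alpha$-block) that is conjugated into $U_{j}$ and on which $\psi_{\ell,w_{0}}$ is already nontrivial, so that only $U_{j}(\BA)$-invariance of $\lam\phi$ is ever used.
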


\begin{proof}
If there exists a simple root subgroup $U$ of $Z_{\ell}$ such that $\epsilon U\epsilon^{-1}$ lies
inside $U_{\alpha,\ell-\alpha-t,t}$ for some $\eps\in W_{\GL_{\alpha}\times\GL_{\ell-\alpha-t}\times\GL_{t}}\bks W_{\GL_{\ell}}$,
then the subgroup $\epsilon_{\alpha,\beta}\eta_{\eps,\gamma} U(\epsilon_{\alpha,\beta}\eta_{\eps,\gamma})^{-1}$ lies inside $U_{j}$.
Since the automorphic function $\lam\phi$ is invariant on $U_{j}(\BA)$ and $\psi_{\ell,w_{0}}$ is not trivial on $U(\BA)$,
\begin{eqnarray*}
&&\int_{U(F)\bks U(\BA)}\lam\phi(\epsilon_{\alpha,\beta}\eta zunh)\psi^{-1}_{\ell,w_{0}}(z)\ud z\\
&=&\lam\phi(\epsilon_{\alpha,\beta}\eta un h)\cdot\int_{E\bks \BA_{E}}\psi^{-1}(x)\ud x
\end{eqnarray*}
is identically zero.

If for each simple root subgroup $U$ of $\GL_{\ell}$, $\epsilon U\epsilon^{-1}$ does not lie inside $U_{\alpha,\ell-\alpha-t,t}$,
then according to \cite[Lemma 5.1]{GRS11}, we choose, under the action of the Weyl group of $M_{\alpha,\ell-\alpha-t,t}$,
\begin{equation}\label{eq:ep}
\epsilon=\begin{pmatrix} &&I_{\alpha}\\&I_{\ell-\alpha-t}&\\ I_{t}&&\end{pmatrix}.
\end{equation}

Since $\alpha\neq 0$ ($\ell<\tilde{m}$), we choose a nontrivial subgroup $S$ of $N_{\ell}$ consisting of elements of form
$$
\begin{pmatrix}
I_{\ell-\alpha}&&&&\\
&I_{\alpha}&y&*&\\
&&I_{m-2\ell}&y'&\\
&&&I_{\alpha}&\\
&&&&I_{\ell-\alpha}
\end{pmatrix}
$$
where $(\tilde{y}_{1}~\tilde{y}_{2}~\tilde{y}_{3})=(0_{r\times (\beta-\alpha)}~y_{2}~y_{3})(w^{t'}_{q}\gamma)^{-1}$,
and $y_{2}$ and $y_{3}$ are of size $\alpha\times (m-2(\ell+\beta-\alpha))$ and $\alpha\times (\beta-\alpha)$, respectively;
and when $G_n$ is split, even orthogonal, $\ell+\beta-\alpha=n$ and the representative $w=\eps^{w_{q}}_{\alpha,\beta}$, we have that $t'=1$, otherwise,
we always have that $t'=0$.
Since $w_{0}$ is anisotropic, $w_{0}$ is not orthogonal to $V_{0}\oplus V^{-}_{\ell,\beta-\alpha}$ and
$\psi_{\ell,w_{0}}$ is not trivial on $S(\BA)$.
By~\eqref{eq:stable-j}, we have $(\epsilon_{\alpha,\beta}\eta_{\eps,\gamma})S(\epsilon_{\alpha,\beta}\eta_{\eps,\gamma})^{-1}$
lies inside $U_{j}$ and then
\begin{eqnarray*}
&&\int_{S(F)\bks S(\BA)}\lam\phi(\epsilon_{\alpha,\beta}\eta xunh)\psi^{-1}_{\ell,w_{0}}(x)\ud x\\
&=&\lam\phi(\epsilon_{\alpha,\beta}\eta un h)\cdot\int_{S(F)\bks S(\BA)}\psi^{-1}_{\ell,w_{0}}(x)\ud x
\end{eqnarray*}
is identically zero. This proves the lemma.
\end{proof}

By Lemma~\ref{lm:r>0} and \eqref{rs}, when $\Re(s)$ is large, the Bessel-Fourier coefficient
$\CB^{\psi_{\ell,w_{0}}}(E(\phi_{\tau\otimes\sigma},\lam))(h)$ is equal to
\begin{equation}\label{bfc0s}
\sum_{\epsilon_{0,\beta};\eta; \delta }
\int_{N^{\eta}_{\ell}(\BA)\bks N_{\ell}(\BA)}\int_{N^{\eta}_{\ell}(F)\bks N^{\eta}_{\ell}(\BA)}\lam\phi(\epsilon_{0,\beta}\eta\delta unh)\psi^{-1}_{\ell,w_{0}}(un)\ud u\ud n.
\end{equation}
In particular, we may choose the $\eps$ in \eqref{eq:ep}, which is part of the representation $\eta_{\eps,\gamma}$ in \eqref{eta},
to be of the form:
$
\eps=\begin{pmatrix}
&I_{\ell-t}\\ I_{t}&
\end{pmatrix}.
$
Note that $\eps$ is one of the representatives of
$W_{\GL_{\alpha}\times\GL_{\ell-\alpha-t}\times\GL_{t}}\bks W_{\GL_{\ell}}$.
The following lemma will help us to eliminate more terms in \eqref{bfc0s}.

\begin{lem}\label{lm:open}
If $\beta>\max\cpair{j-\ell,0}$ and $\gamma w_{0}$ is not orthogonal to $V^{-}_{\ell,\beta}$ for $\gamma\in P'_{w}\bks G_{n-\ell}/H_{n-\ell}$,
then the inner integral in \eqref{bfc0s} has the property:
$$
\int_{N^{\eta}_{\ell}(F)\bks N^{\eta}_{\ell}(\BA)}\lam\phi(\epsilon_{0,\beta}\eta_{\eps,\gamma} unh)\psi^{-1}_{\ell,w_{0}}(un)\ud u=0
$$
for all choices of data.
\end{lem}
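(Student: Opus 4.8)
The plan is to argue exactly as in Lemma~\ref{lm:r>0}: to exhibit a nontrivial unipotent subgroup $S$ of $N_\ell$, contained in and normalized by $N^\eta_\ell$, with the two properties that (i) conjugation by $\epsilon_{0,\beta}\eta_{\eps,\gamma}$ carries $S$ into the unipotent radical $U_j$, and (ii) the restriction of $\psi_{\ell,w_0}$ to $S$ is a \emph{nontrivial} character. Granting such an $S$, one writes a generic element of $N^\eta_\ell(\BA)$ in the form $xu'$ with $x\in S(\BA)$ and $u'$ running over a transversal of $S$ in $N^\eta_\ell$; property (i) together with the left $U_j(\BA)$-invariance of the automorphic form $\lam\phi$ lets one delete the factor $(\epsilon_{0,\beta}\eta_{\eps,\gamma})\,x\,(\epsilon_{0,\beta}\eta_{\eps,\gamma})^{-1}\in U_j(\BA)$, while $\psi_{\ell,w_0}$ being a character of $N_\ell$ lets one factor off $\psi^{-1}_{\ell,w_0}(x)$. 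Hence the inner integral in \eqref{bfc0s} becomes $\bigl(\int_{S(F)\bs S(\BA)}\psi^{-1}_{\ell,w_0}(x)\,\ud x\bigr)\cdot(\cdots)$, and the first factor is $0$ by (ii). This is the same mechanism used in the proof of Lemma~\ref{lm:r>0}, and is the global analogue of \cite[Proposition~5.2]{GRS11}.

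To build $S$, fix the Weyl representative $\eps=\ppair{\begin{smallmatrix} & I_{\ell-t}\\ I_t & \end{smallmatrix}}$, with $t=j-\beta$, as was permitted just before the statement of the lemma. Since the character $\psi_{\ell,w_0}$ sees only the last row $y_\ell$ of the $y$-block of $n\in N_\ell$ (see \eqref{nell} and \eqref{chw0}), I would take for $S$ the group of those $n\in N_\ell$ whose $\GL_\ell$-block $z$ is trivial and whose $y$-block is supported on its last row only, that last row being the functional $w\mapsto q_{_{W_\ell}}(w,v)$ on $W_\ell$ with $v$ ranging over a $\beta$-dimensional subspace $U_0$ of $W_\ell$, the remaining coordinates $y'$ and $x$ of $n$ being then determined by the condition $n\in G_n$; this is a unipotent subgroup (of Heisenberg type). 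Here $U_0$ is chosen to be the $(w^{t'}_q\gamma)^{-1}$-image of $V^-_{\ell,\beta}$ (notation \eqref{vpm}), with $t'$ the same twist occurring in the proof of Lemma~\ref{lm:r>0}, so $t'=1$ only in the split even orthogonal Case~(2-2) for the representative $w=\eps^{w_q}_{\alpha,\beta}$ and $t'=0$ otherwise. The hypothesis $\beta>\max\{j-\ell,0\}$ is used precisely here: $\beta>j-\ell$ makes the block $\GL_{\ell-j+\beta}$ of \eqref{eq:stable-l} non-trivial, which is what forces the conjugate of $S$ to land inside $U_j$ rather than meet the Levi $M_j$, while $\beta>0$ makes $V^-_{\ell,\beta}$, and hence $S$, non-trivial.

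Two verifications then remain, both at the level of the explicit matrices \eqref{eq:stable-l}, \eqref{eq:stable-j}, \eqref{eta}, and both direct analogues of steps already carried out in the proof of Lemma~\ref{lm:r>0}. First, property (i) and the normality of $S$ in $N^\eta_\ell$: conjugating $S$ by $\eta_{\eps,\gamma}=\diag(\eps,\gamma,\eps^*)$ replaces the $y$-block by $\eps^{-1}y\gamma$, placing the single nonzero row among the rows of the $\GL_{\ell-j+\beta}$-block and rotating $U_0$ onto $V^-_{\ell,\beta}$; conjugating further by $\epsilon_{0,\beta}$ then moves the resulting entry, via \eqref{eq:stable-j}, into one of the off-diagonal $y_\bullet$-blocks there, all of which lie in $U_j$. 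Second, property (ii): on $S$ one has $\psi_{\ell,w_0}(n)=\psi\bigl(q_{_{W_\ell}}(v,w_0)\bigr)$, the $z$-contribution being trivial; as $v$ runs over $U_0=(w^{t'}_q\gamma)^{-1}V^-_{\ell,\beta}$ this is a nontrivial additive character exactly when $w_0$ is not orthogonal to $(w^{t'}_q\gamma)^{-1}V^-_{\ell,\beta}$, equivalently when $w^{t'}_q\gamma\,w_0$ is not orthogonal to $V^-_{\ell,\beta}$; and since $w_q$ acts trivially on $V^-_{\ell,\beta}$ (it only alters the anisotropic kernel) this reduces to $\gamma w_0\not\perp V^-_{\ell,\beta}$, which is exactly the second hypothesis.

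The main obstacle, as usual in this circle of ideas, is the block-matrix bookkeeping: checking against \eqref{eq:stable-l}--\eqref{eq:stable-j} that the conjugate of $S$ by $\epsilon_{0,\beta}\eta_{\eps,\gamma}$ genuinely lies inside $U_j$ and not partly inside $M_j$, that $N^\eta_\ell$ normalizes $S$, and that the twist $w^{t'}_q$ is correctly tracked in the split even orthogonal Case~(2-2). Everything else is formal, and the non-orthogonality assumption on $\gamma w_0$ is exactly what is needed to make the relevant additive character nontrivial.
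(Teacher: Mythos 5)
Your proposal is the paper's own argument: the paper likewise produces a subgroup $S\subset N^{\eta}_{\ell}$ with trivial $z$-part, whose $y$-block is supported in the bottom $\ell-j+\beta$ rows and, after the $(w^{t'}_{q}\gamma)^{-1}$-twist, in the $V^{-}_{\ell,\beta}$-columns; it observes that $\psi_{\ell,w_{0}}$ is nontrivial on $S(\BA)$ precisely because $\gamma w_{0}$ is not orthogonal to $V^{-}_{\ell,\beta}$, that $(\epsilon_{0,\beta}\eta_{\eps,\gamma})S(\epsilon_{0,\beta}\eta_{\eps,\gamma})^{-1}$ lies in $U_{j}$ by \eqref{eq:stable-j}, and then factors the integral through $S(F)\bks S(\BA)$ to get zero. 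Using only the last row of the $y$-block, as you do, is harmless since the character sees nothing else, and your account of where the two hypotheses enter ($\beta>j-\ell$ versus $\beta>0$) matches the paper.

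Two of your auxiliary claims are, however, wrong as stated, and they sit exactly at the verification you defer. First, it is not true that all the $y_{\bullet}$-blocks of \eqref{eq:stable-j} lie in $U_{j}$: the blocks $y_{1}$, $y'_{6}$, $y_{4}$, $y'_{4}$, $y_{6}$, $y'_{1}$ sit in the Levi $M_{j}$ (inside the two $\GL_{j}$-corners or the middle $G_{n-j}$-block). The conjugated entry must land in the $y_{5}$-type slot, and this is precisely what $\beta>\max\{j-\ell,0\}$ buys: it makes the $\GL_{\ell-j+\beta}$ row block of \eqref{eq:stable-l} nonempty, so the relevant row is a $b$-row; if instead $\beta=j-\ell$ the row is a $c$-row and its $V^{-}_{\ell,\beta}$-column entries conjugate to the $y_{6}$-slot inside the $\GL_{j}$-Levi, where the $U_{j}(\BA)$-invariance argument collapses. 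Second, your parametrization of the admissible last rows is on the wrong side of the duality: under the standard pairing ($q_{W_{\ell}}(\cdot,e_{-(\ell+k)})$ reads the $e_{\ell+k}$-coordinate), rows of the form $w\mapsto q_{W_{\ell}}(w,v)$ with $v$ in the twist of $V^{-}_{\ell,\beta}$ are supported in the \emph{first} $\beta$ columns of the $y$-block (the $d$-columns of \eqref{eq:stable-l}), and such entries conjugate below the block diagonal of $P_{j}$, so that $S$ is not even contained in $N^{\eta}_{\ell}$ and the factoring does not parse. The correct $S$ has its last row supported, after the twist, in the \emph{last} $\beta$ columns (the $d^{*}$-columns, i.e. the paper's $(0\ y_{5})(w^{t'}_{q}\gamma)^{-1}$), which is the $y_{5}$-slot and does land in $U_{j}$; with that choice the restriction of $\psi_{\ell,w_{0}}$ to $S$ pairs the row against the translate of $w_{0}$ by $\gamma$ (up to the $w_{q}^{t'}$-twist), and its nontriviality is exactly the stated hypothesis. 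So the mechanism and the use of the hypotheses are right, but the definition of $S$ must be corrected to the paper's column support before the argument closes.
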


\begin{proof}
Consider the subgroup $S$ of $N_{\ell}$ consisting of elements of form
$$
\begin{pmatrix}
I_{t}&&&&\\
&I_{\ell-t}&y&*&\\
&&I_{m-2\ell}&y'&\\
&&&I_{\ell-t}&\\
&&&&I_{\beta}
\end{pmatrix},
$$
where $y=(0_{(\ell-t)\times (m-2\ell-\beta)}~y_{5})(w^{t'}_{q}\gamma)^{-1}$ with $t'$ as defined before.
and $y_{5}$ is of size $(\ell-t)\times \beta$. By $\ell-t=\ell-j+\beta>0$ and $\beta>0$, $y_{5}$ is not trivial. Since $\gamma w_{0}$ is not orthogonal to $V^{-}_{\ell,\beta}$, $\psi_{\ell,w_{0}}$ is not trivial on $S(\BA_{F})$.
By~\eqref{eq:stable-j}, $\phi$ is invariant on $(\epsilon_{0,\beta}\eta_{\eps,\gamma}) S(\BA)(\epsilon_{0,\beta}\eta_{\eps,\gamma})^{-1}$.
it follows that as an inner integration, the following integral
\begin{eqnarray*}
&&\int_{S(F)\bks S(\BA)}\lam\phi(\epsilon_{0,\beta}\eta_{\eps, \gamma} xunh)\psi^{-1}_{\ell,w_{0}}(x)\ud x\\
&=&\lam\phi(\epsilon_{0,\beta}\eta_{\eps, \gamma} unh)\cdot\int_{S(F)\bks S(\BA)}\psi^{-1}_{\ell,w_{0}}(x)\ud x
\end{eqnarray*}
is identically zero. This finishes the proof.
\end{proof}

We summarize the above calculation as

\begin{prop}\label{bfcc}
For $\Re(s)$ large, the Bessel-Fourier coefficient of the Eisenstein series, $\CB^{\psi_{\ell,w_{0}}}(E(\phi_{\tau\otimes\sigma},\lam))(h)$,
is equal to
\begin{equation*}
\sum_{\epsilon_{\beta}}\sum_{\eta}\sum_{\delta}
\int_{N^{\eta}_{\ell}(\BA)\bks N_{\ell}(\BA)}\int_{N^{\eta}_{\ell}(F)\bks N^{\eta}_{\ell}(\BA)}\lam\phi(\epsilon_{\beta}\eta\delta unh)\psi^{-1}_{\ell,w_{0}}(un)\ud u\ud n,
\end{equation*}
where
\begin{itemize}
\item $\eps_{\beta}=\eps_{0,\beta}\in\CE_{j,\ell}$, the set of representatives of all double cosets in $P_{j}(F)\bks G_n(F)/P_{\ell}(F)$,
with $\alpha=0$ and the properties that
if $\beta>\max\cpair{j-\ell,0}$, then $\gamma w_{0}$ is orthogonal to $V^{-}_{\ell,\beta}$ for $\gamma\in P'_{w}(F)\bks G_{n-\ell}(F)/H_{n-\ell}(F)$; or otherwise,
$\beta=\max\cpair{j-\ell,0}$;
\item $\eta=\diag(\eps,\gamma,\eps^{*})$ belongs to $\CN_{\beta,\ell,w_0}$ with $\alpha=0$, the set
of representatives of $P^{\epsilon_{\beta}}_{\ell}(F)\bks P_{\ell}(F)/ R_{\ell,w_{0}}(F)$ with $\eps=\begin{pmatrix}
&I_{\ell-t}\\ I_{t}&
\end{pmatrix}$ and $t=j-\beta$;
\item $\delta$ belongs to $H_{n-\ell}^{\eta}(F)\bks H_{n-\ell}(F)$.
\end{itemize}
\end{prop}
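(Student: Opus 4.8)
The plan is to obtain Proposition~\ref{bfcc} by extracting the surviving terms of the unfolded expression \eqref{rs}, using the two vanishing Lemmas~\ref{lm:r>0} and~\ref{lm:open} to throw away the rest, and then matching up index sets. No new analytic input is needed: the standing hypothesis that $\Re(s)$ is large is kept throughout (it was already used in passing from the integral \eqref{bfces} defining the Bessel--Fourier coefficient to \eqref{rs}, by unfolding the Eisenstein series and interchanging the sum over $P_j(F)\bks G_n(F)$ with the Fourier integral over $N_\ell$), and every inner integral runs over the compact quotient $N^{\eta}_\ell(F)\bks N^{\eta}_\ell(\BA)$.

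First I would apply Lemma~\ref{lm:r>0}: in \eqref{rs} every term attached to a representative $\epsilon_{\alpha,\beta}$ with $\alpha>0$ has vanishing inner integral, so the Bessel--Fourier coefficient collapses to the expression \eqref{bfc0s}, indexed only by the $\epsilon_{0,\beta}$. At this stage I would record what the remaining parametrizing data look like once $\alpha=0$: from the description of $\CE_{j,\ell}$ in Cases (1), (2-1) and (2-2) the defining inequality $j\le \ell+\beta\le\tilde{m}$ forces $\beta\ge\max\cpair{j-\ell,0}$; the quotient of Weyl groups $W_{\GL_{\alpha}\times\GL_{\ell-\alpha-t}\times\GL_{t}}\bks W_{\GL_{\ell}}$ degenerates to $W_{\GL_{\ell-t}\times\GL_{t}}\bks W_{\GL_{\ell}}$ with $t=j-\beta$, for which one may take $\eps=\begin{pmatrix}&I_{\ell-t}\\ I_{t}&\end{pmatrix}$ as noted just before Lemma~\ref{lm:open}; and $\eta=\diag(\eps,\gamma,\eps^{*})$ runs over $\CN_{\beta,\ell,w_0}$, the $\alpha=0$ specialization of $\CN_{\alpha,\beta,\ell,w_0}$, with $\gamma\in P'_{w}(F)\bks G_{n-\ell}(F)/H_{n-\ell}(F)$ and $\delta\in H^{\eta}_{n-\ell}(F)\bks H_{n-\ell}(F)$.

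Next I would invoke Lemma~\ref{lm:open}: among the terms of \eqref{bfc0s} with $\beta>\max\cpair{j-\ell,0}$, those for which $\gamma w_0$ is \emph{not} orthogonal to $V^{-}_{\ell,\beta}$ again contribute a vanishing inner integral and are discarded, while when $\beta=\max\cpair{j-\ell,0}$ no constraint on $\gamma$ is imposed. What is left is exactly the sum asserted in Proposition~\ref{bfcc}: $\epsilon_\beta=\epsilon_{0,\beta}$ subject to the stated orthogonality condition, $\eta\in\CN_{\beta,\ell,w_0}$ of the displayed shape with $t=j-\beta$, and $\delta\in H^{\eta}_{n-\ell}(F)\bks H_{n-\ell}(F)$.

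I expect the work here to be bookkeeping rather than ideas, since the two eliminations are already done. The care needed is to verify, term by term in the discarded families, that the auxiliary unipotent subgroup $S\subseteq N_\ell$ constructed in the proof of the relevant lemma genuinely lies in $N^{\eta}_\ell$ (so that it really sits inside the inner integration of \eqref{rs}, resp.\ \eqref{bfc0s}) and that $(\epsilon_{\alpha,\beta}\eta_{\eps,\gamma})\,S\,(\epsilon_{\alpha,\beta}\eta_{\eps,\gamma})^{-1}\subseteq U_j$ (so that $\lam\phi$ is left-invariant under $S(\BA)$ and the integral of $\psi^{-1}_{\ell,w_0}$ over $S(F)\bks S(\BA)$ vanishes by non-triviality of the restricted character); both follow from the explicit matrix shapes \eqref{eq:stable-l}--\eqref{eq:stable-j} of the stabilizers. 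The one genuinely extra point is the $F$-split even orthogonal Case (2-2), where each pair $(\alpha,\beta)$ with $\ell+\beta-\alpha=n$ indexes two double cosets $\eps_{\alpha,\beta}$ and $\tilde{\eps}_{\alpha,\beta}=w_{q}\eps_{\alpha,\beta}w_{q}$: I would run the same elimination against the $w_q$-conjugated stabilizer $(g^{(\alpha,\beta)}_{\ell})^{w_{q}}$, which is precisely why the parameter $t'\in\cpair{0,1}$ appears in the proofs of the two lemmas, and then both families fold into the single index set $\CE_{j,\ell}$ exactly as stated.
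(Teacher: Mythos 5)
Your proposal is correct and follows essentially the same route as the paper: the proposition is exactly the summary of the unfolded expression \eqref{rs} after Lemma~\ref{lm:r>0} kills the $\alpha>0$ terms and Lemma~\ref{lm:open} kills the terms with $\beta>\max\{j-\ell,0\}$ and $\gamma w_0$ not orthogonal to $V^{-}_{\ell,\beta}$, with the surviving index set read off from the double coset parametrizations. Your bookkeeping remarks (that $S\subseteq N^{\eta}_{\ell}$, that $(\epsilon_{\alpha,\beta}\eta_{\eps,\gamma})S(\epsilon_{\alpha,\beta}\eta_{\eps,\gamma})^{-1}\subseteq U_j$, the reduction of $\eps$ to the single representative via the character-nontriviality argument, and the extra representative $\tilde{\eps}_{\alpha,\beta}$ with $t'=1$ in Case (2-2)) are precisely the points the paper's own discussion relies on.
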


We are going to apply the formula in Proposition \ref{bfcc} to the calculation of the global zeta integral
$\CZ(s,\phi_{\tau\otimes\sigma},\varphi_\pi,\psi_{\ell,w_{0}})$ and use the cuspidality of $\varphi_\pi$ to prove that
the global zeta integral $\CZ(s,\phi_{\tau\otimes\sigma},\varphi_\pi,\psi_{\ell,w_{0}})$ is eulerian.

\subsection{Global zeta integrals}
By applying Proposition \ref{bfcc} to the global zeta integral in \eqref{bpes}, we get
\begin{align}
& \CZ(s,\phi_{\tau\otimes\sigma},\varphi_\pi,\psi_{\ell,w_{0}})\label{eq:P-E} \\
=&\int_{H_{n-\ell}(F)\bks H_{n-\ell}(\BA)}\CB^{\psi_{\ell,w_{0}}}(E(\phi_{\tau\otimes\sigma},s))(h)\varphi(h)\ud h\nonumber\\
=&\sum_{\epsilon_{\beta};\eta;\delta}
\int_{[H_{n-\ell}]}\varphi(h)\int_{N^{\eta}_{\ell}(\BA)\bks N_{\ell}(\BA)}\int_{[N^{\eta}_{\ell}]}
\lam\phi(\epsilon_{\beta}\eta\delta unh)\psi^{-1}_{\ell,\kappa}(un)\ud u\ud n\ud h \nonumber
\end{align}
where $[H_{n-\ell}]:=H_{n-\ell}(F)\bks H_{n-\ell}(\BA)$ and $[N^{\eta}_{\ell}]:=N^{\eta}_{\ell}(F)\bks N^{\eta}_{\ell}(\BA)$; and
the summations $\sum_{\epsilon_{\beta};\eta;\delta}$ and other conditions for the representatives are given in Proposition \ref{bfcc}.

We combine the summation on $\delta$ and the integration ${\rm d} h$ and obtain that $\CZ(s,\phi_{\tau\otimes\sigma},\varphi_\pi,\psi_{\ell,w_{0}})$
is equal to
\begin{equation}\label{eq:P-E1}
\sum_{\epsilon_{\beta};\eta}
\int_{H_{n-\ell}^{\eta}(F)\bks H_{n-\ell}(\BA)}\varphi(h)\int_{n}\int_{[N^{\eta}_{\ell}]}
\lam\phi(\epsilon_{\beta}\eta unh)\psi^{-1}_{\ell,\kappa}(un)\ud u\ud n\ud h,
\end{equation}
where the integration $\int_n$ is over $N^{\eta}_{\ell}(\BA)\bks N_{\ell}(\BA)$.
The following lemma is to make use of the cuspidality of $\varphi_\pi$.

\begin{lem}\label{lm:close}
Let $\alpha=0$ and $\gamma$ be a representative in $P'_{w}\bks G_{n-\ell}/H_{n-\ell}$. For a representative $\eta=\eta_{\eps,\gamma}$,
if the stabilizer $H^{\eta}_{n-\ell}$ is a proper maximal parabolic subgroup of $H_{n-\ell}$,
then the corresponding summand in \eqref{eq:P-E1} has the property:
$$
\int_{H^{\eta}_{n-\ell}(F)\bks H_{n-\ell}(\BA)}\varphi(h)\int_{n}
\int_{[N^{\eta}_{\ell}]}
\lam\phi(\epsilon_{\beta}\eta_{\eps,\gamma} unh)\psi^{-1}_{\ell,w_{0}}(un)\ud u\ud n\ud h=0
$$
for all choices of data.
\end{lem}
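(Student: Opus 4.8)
The plan is to exploit the cuspidality of the form $\varphi_\pi$ on $H_{n-\ell}$, in the spirit of the local computation in \cite[Chapter 5]{GRS11}. Write $H:=H_{n-\ell}$, let $Q:=H^{\eta}_{n-\ell}$, which by hypothesis is a proper maximal parabolic subgroup of $H$, and let $U_Q$ denote its unipotent radical. Setting
\begin{equation*}
F(h):=\int_{N^{\eta}_{\ell}(\BA)\bks N_{\ell}(\BA)}\int_{N^{\eta}_{\ell}(F)\bks N^{\eta}_{\ell}(\BA)}\lam\phi(\epsilon_{\beta}\eta_{\eps,\gamma}unh)\,\psi^{-1}_{\ell,w_{0}}(un)\,\ud u\,\ud n,
\end{equation*}
the summand in \eqref{eq:P-E1} is $\int_{Q(F)\bks H(\BA)}\varphi_\pi(h)F(h)\,\ud h$. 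For $\Re(s)$ large this is absolutely convergent, since $\varphi_\pi$ is rapidly decreasing while $\lam\phi$, and hence $F$, is of moderate growth (cf. Proposition \ref{pp}); as the computation leading to \eqref{eq:P-E1} was carried out under this assumption, it suffices to prove the vanishing for such $s$.

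The key step is to show that $F$ is left-invariant under $U_Q(\BA)$, not merely under $Q(F)$. First I would record the same normalizing properties already used for the $Q(F)$-invariance of $F$: since $Q\subseteq H\cong L_{\ell,w_{0}}$ lies in the Levi $M_{[1^\ell]}$, it normalizes $N_\ell$ and its subgroup $N^{\eta}_{\ell}$; an element $u_Q\in U_Q$ acts unipotently on them, hence preserves their Haar measures; and because $Q\subseteq L_{\ell,w_{0}}$, which is exactly the stabilizer of $\psi_{\ell,w_{0}}$, conjugation by $u_Q$ fixes that character. Changing variables $un\mapsto u_Q(un)u_Q^{-1}$ in the double integral therefore gives
\begin{equation*}
F(u_Q h)=\int_{N^{\eta}_{\ell}(\BA)\bks N_{\ell}(\BA)}\int_{N^{\eta}_{\ell}(F)\bks N^{\eta}_{\ell}(\BA)}\lam\phi(\epsilon_{\beta}\eta_{\eps,\gamma}\,u_Q\,unh)\,\psi^{-1}_{\ell,w_{0}}(un)\,\ud u\,\ud n.
\end{equation*}
Now I would invoke the group-theoretic fact that $\epsilon_{\beta}\,\eta_{\eps,\gamma}\,U_Q\,\eta_{\eps,\gamma}^{-1}\,\epsilon_{\beta}^{-1}$ is contained in $U_{j}$, the unipotent radical of $P_j$. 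Since $\lam\phi$ belongs to $\CA(U_{j}(\BA)M_{j}(F)\bks G_n(\BA))_{\tau\otimes\sigma}$ and so is left $U_{j}(\BA)$-invariant, the factor $u_Q$ may then be absorbed, yielding $F(u_Q h)=F(h)$ for all $u_Q\in U_Q(\BA)$.

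Granting this invariance, the vanishing follows formally. Both $\varphi_\pi$ and $F$ are left $Q(F)$-invariant, and $F$ is in addition left $U_Q(\BA)$-invariant, so factoring the integral through the compact fibre $Q(F)\bks Q(F)U_Q(\BA)\cong U_Q(F)\bks U_Q(\BA)$ gives
\begin{equation*}
\int_{Q(F)\bks H(\BA)}\varphi_\pi(h)F(h)\,\ud h=\int_{Q(F)U_Q(\BA)\bks H(\BA)}\Bigl(\int_{U_Q(F)\bks U_Q(\BA)}\varphi_\pi(u_Q h)\,\ud u_Q\Bigr)F(h)\,\ud h.
\end{equation*}
The inner integral is the constant term of $\varphi_\pi$ along the unipotent radical of the proper parabolic subgroup $Q=H^{\eta}_{n-\ell}$ of $H_{n-\ell}$, and it vanishes identically because $\pi$ is cuspidal; hence the summand is zero, as claimed.

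The one genuine computation, and the step I expect to be the main obstacle, is the inclusion $\epsilon_{\beta}\,\eta_{\eps,\gamma}\,U_Q\,\eta_{\eps,\gamma}^{-1}\,\epsilon_{\beta}^{-1}\subseteq U_{j}$. This requires unwinding the hypothesis that $H^{\eta}_{n-\ell}=L_{\ell,w_{0}}\cap\gamma^{-1}P'_{w}\gamma$ (see \eqref{hgamma}) is a maximal parabolic: with the choice of $\eps$ fixed in Proposition \ref{bfcc} (so $\alpha=0$, $t=j-\beta$) and the description of $P'_{w}$ preceding \eqref{hgamma}, this happens precisely when $\gamma w_{0}$ lies in the ``Levi position'' for the isotropic subspace stabilized by $P'_{w}$ --- equivalently, when the orbit is not already discarded by Lemma \ref{lm:open} --- in which case $U_Q=L_{\ell,w_{0}}\cap\gamma^{-1}U'_{w}\gamma$, where $U'_{w}$ is the unipotent radical of $P'_{w}$. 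Conjugating by $\eta_{\eps,\gamma}=\diag(\eps,\gamma,\eps^{*})$ places $U_Q$ inside $\diag(I_{\ell},U'_{w},I_{\ell})$, and a direct matrix computation using the explicit shapes \eqref{eq:stable-l}--\eqref{eq:stable-j} of the stabilizers (with $\alpha=0$, and the twist by $w_{q}$ in Case (2-2)) then shows that conjugation by $\epsilon_{\beta}$ sends this subgroup into $U_{j}$. Running this through the four cases --- split even orthogonal, non-split even orthogonal, odd orthogonal, and unitary --- and checking in each that $U_Q$ is exactly the unipotent radical of $Q$ rather than a larger subgroup, is all that remains; the rest of the argument is formal.
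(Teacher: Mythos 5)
Your argument is correct and is essentially the paper's own proof: you show the inner integral is left-invariant under the adelic points of the unipotent radical $U'$ of $H^{\eta}_{n-\ell}$ (via the inclusion $\eps_{\beta}\eta_{\eps,\gamma}U'(\eps_{\beta}\eta_{\eps,\gamma})^{-1}\subseteq U_{j}$, which the paper likewise asserts without carrying out the matrix computation), and then the constant term of $\varphi_\pi$ along $U'$ vanishes by cuspidality of $\pi$, killing the whole integral. The additional checks you record (that $H^{\eta}_{n-\ell}$ normalizes $N_{\ell}$ and $N^{\eta}_{\ell}$ and stabilizes $\psi_{\ell,w_{0}}$, so the change of variables is legitimate) are implicit in the paper's one-line reduction.
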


\begin{proof}
Let $H^{\eta}_{n-\ell}=M'U'$, where $U'$ is the unipotent radical of the parabolic subgroup
$H^{\eta}_{n-\ell}$ of $H_{n-\ell}$.
Since $\phi$ is left-invariant with respect to the image under the adjoint action by $\eps_{0,\beta}\eta_{\eps,\gamma}$ of
the unipotent radical $U'(\BA)$ of $H^{\eta}_{n-\ell}(\BA)$, we deduce that
\begin{align*}
&\int_{H^{\eta}_{n-\ell}(F)\bks H_{n-\ell}(\BA)}\varphi(h)\int_{n}
\int_{[N^{\eta}_{\ell}]}
\lam\phi(\epsilon_{\beta}\eta_{\eps,\gamma} unh)\psi^{-1}_{\ell,w_{0}}(un)\ud u\ud n\ud h\\
=&\int_{h}\int_{[U']}\varphi(u'h)\ud u'
\int_{n}\int_{[N^{\eta}_{\ell}]}
\lam\phi(\epsilon_{\beta}\eta_{\eps,\gamma}  un h)\psi^{-1}_{\ell,w_{0}}(un)\ud u\ud n\ud h\\
\end{align*}
where $\int_h$ is over $M'(F)U'(\BA)\bks H_{n-\ell}(\BA)$.
By the cuspidality of $\pi$, we have that
$$
\int_{U'(F)\bks U'(\BA)}\varphi(u'h)\ud u'=0,
$$
and hence the whole integral is zero. This proves the lemma.
\end{proof}

By Proposition \ref{bfcc}, the representatives $\eps_{\beta}$ have the restrictions that either $\beta=\max\{0,j-\ell\}$ or  $\beta>\max\{0,j-\ell\}$ with $\gamma w_{0}$
being orthogonal to $V^{-}_{\ell,\beta}$ for $\gamma\in P'_{w}(F)\bks G_{n-\ell}(F)/H_{n-\ell}(F)$.
Next, we discuss the double cosets decomposition $\gamma\in P'_{w}(F)\bks G_{n-\ell}(F)/H_{n-\ell}(F)$.

\begin{lem}[Proposition 4.4, \cite{GRS11}]\label{lm:P-L}
Let $X$ be a non-trivial totally isotropic subspace of $W_\ell$ and $P$ be the maximal parabolic subgroup of $G_{n-\ell}$
preserving $X$. Then
\begin{enumerate}
\item If $\dim_E X<\wi(W_\ell)$, then the set $P\bks G_{n-\ell}/H_{n-\ell}$ consists of two elements.
\item Assume that $\wi(w^{\perp}_{0})=\dim_E X=\wi(W_{\ell})$.
\begin{enumerate}
\item If $G_{n-\ell}$ is unitary, then $P\bks G_{n-\ell}/H_{n-\ell}$ consists of two elements.
\item If $G_{n-\ell}$  is orthogonal and $\dim W_\ell\geq 2\dim X+2$, then $P\bks G_{n-\ell}/H_{n-\ell}$
consists of two elements.
\item If $G_{n-\ell}$  is orthogonal and $\dim W_\ell=2\dim X+1$, then $P\bks G_{n-\ell}/H_{n-\ell}$
consists of three elements.
\end{enumerate}
\item If $\dim_EX=\wi(W_\ell)$ and $\wi(w^{\perp}_{0})=\dim_E X-1$,
then $P\bks G_{n-\ell}/H_{n-\ell}$  consists of one element.
\item If $\dim_E W_\ell=2\dim_E X$, then $\wi(w^{\perp}_{0})=\dim X-1$, and, in particular, $P\bks G_{n-\ell}/H_{n-\ell}$
consists of one element.
\end{enumerate}
\end{lem}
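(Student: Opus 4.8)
The plan is to reduce the double coset count to a computation in a rank-one-type situation and then invoke the classical orbit theory of a quasi-split classical group acting on (isotropic flags relative to) a fixed vector. Recall that $H_{n-\ell}=\RU(q_{_{W_\ell\cap w_0^\perp}})$ sits inside $G_{n-\ell}=\RU(q_{_{W_\ell}})$ as the stabilizer of the anisotropic vector $w_0$, and $P=P_X$ is the stabilizer in $G_{n-\ell}$ of a totally isotropic subspace $X\subset W_\ell$. First I would observe that the quotient $P_X\backslash G_{n-\ell}$ is identified with the variety of totally isotropic subspaces of $W_\ell$ of the same dimension and same Witt type as $X$, so that $P_X\backslash G_{n-\ell}/H_{n-\ell}$ is the set of $H_{n-\ell}$-orbits, i.e. the $\RU(q_{_{W_\ell}})_{w_0}$-orbits, on such subspaces. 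By Witt's theorem applied inside $W_\ell$, two isotropic subspaces $X_1,X_2$ of the same dimension lie in the same $H_{n-\ell}$-orbit precisely when the pairs $(X_i, w_0)$ are isometric, which — since $w_0$ is anisotropic with fixed norm — is governed entirely by the isomorphism class of the small form $q_{_V}|_{(X_i + Ew_0)\cap (X_i+Ew_0)^\perp}$ together with the relative position of $X_i$ and $w_0^\perp$ (that is, whether $X_i\subseteq w_0^\perp$ or not).

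Next I would organize the case analysis around exactly that dichotomy. An orbit is determined by: (a) whether $X$ is contained in $w_0^\perp$ or meets it in a hyperplane; and (b) in the latter case, the (one-dimensional) radical behavior of the induced form, which for a quasi-split form over $W_\ell$ reduces to a square-class or discriminant invariant taking at most one nontrivial value. Thus one gets: two orbits generically (one with $X\subseteq w_0^\perp$, one with $X\not\subseteq w_0^\perp$), which is the content of (1) and (2a)--(2b). When $\dim_E W_\ell = 2\dim_E X + 1$ and $G_{n-\ell}$ is orthogonal, there is an extra square-class invariant available for the rank-one orthogonal complement, splitting the non-contained case into two, giving three orbits total, which is (2c). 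When $\dim_E X=\wi(W_\ell)$ but $\wi(w_0^\perp)=\dim_E X-1$ — so no maximal isotropic subspace of $W_\ell$ can lie inside $w_0^\perp$ — the contained orbit simply does not exist, leaving one orbit, which is (3); and likewise in case (4) where $\dim_E W_\ell = 2\dim_E X$ forces, by parity and the fact that $w_0$ is anisotropic, $\wi(w_0^\perp)=\dim_E X-1$, so again only the non-contained orbit survives and the square-class invariant is rigid, giving one orbit.

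For each of the at most three candidate classes I would then exhibit an explicit representative $\gamma$ in the Weyl-group-times-split-torus form consistent with \eqref{eta}, using the standard basis $\{e_{\pm(\ell+1)},\dots\}$ from \eqref{vpm} and the explicit choice $w_0=y_\kappa$ from \eqref{w0}, and verify non-conjugacy by computing the isometry invariant of $(X,w_0)$ on each; this is the routine part and I would not grind through it. The genuine obstacle is the careful parity and quasi-splitness bookkeeping that distinguishes cases (2), (3) and (4): one must track how $\wi(W_\ell)$, $\wi(w_0^\perp)$, $\dim_E W_\ell$ and $\dim_E X$ interact — using that $\dim_E V$ and $\dim_E(W_\ell\cap w_0^\perp)$ have opposite parity (as recorded in the text) and that $G_{n-\ell}$ is quasi-split — to see exactly when the ``contained'' orbit is empty and exactly when the residual square-class invariant is free versus forced. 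Since this is a direct transcription into the global (rational points) setting of Proposition 4.4 of \cite{GRS11}, whose local/algebraic proof is purely Witt-theoretic and insensitive to the base field, the cleanest route is to cite that argument verbatim and only spell out the adelic bookkeeping; I would present it that way.
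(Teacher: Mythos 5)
Your reduction of $P\bks G_{n-\ell}/H_{n-\ell}$ to $H_{n-\ell}$-orbits on totally isotropic subspaces of dimension $\dim_E X$, sorted by their position relative to $w_0^\perp$, is the right frame (and the paper itself gives no argument beyond the citation to [GRS11, Prop.\ 4.4]). But your orbit criterion, and with it your explanation of case (2c), is wrong. The claim that $X_1,X_2$ are $H_{n-\ell}$-conjugate precisely when the pairs $(X_i,w_0)$ are isometric is what Witt's extension theorem gives for the \emph{full} isometry group $\mathrm{O}(W_\ell\cap w_0^\perp)$; the acting group here is the special orthogonal group, and the discrepancy between the two is exactly the source of the third orbit. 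Moreover the ``extra square-class invariant splitting the non-contained case'' does not exist: if $X\not\subseteq w_0^\perp$, write a vector $x\in X\setminus w_0^\perp$ as $x=a w_0+v$ with $v\in w_0^\perp$; isotropy forces $q(v,v)=-a^2\,q(w_0,w_0)$, so the square class of $q(v,v)$ is pinned down by $w_0$ and carries no freedom, and in fact all non-contained $X$ form a single $H_{n-\ell}$-orbit (the stabilizer of such an $X$ in $\mathrm{O}(w_0^\perp)$ contains an element of determinant $-1$ acting on a line orthogonal to $X\cap w_0^\perp$ and to $v$, so the $\mathrm{O}$-orbit does not split under $\SO$). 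The correct mechanism for (2c) is the opposite one: when $\dim W_\ell=2\dim X+1$ and $\wi(w_0^\perp)=\dim X$, the space $W_\ell\cap w_0^\perp$ is split of even dimension $2\dim X$, and the isotropic subspaces of dimension $\dim X$ \emph{contained} in $w_0^\perp$ are maximal there, with stabilizer a Siegel-type parabolic lying inside $\SO(w_0^\perp)$; hence they fall into two $H_{n-\ell}$-orbits (the two families), giving $2+1=3$. This is corroborated by the paper's own use of the lemma right after its statement: in the three-element case two representatives have maximal parabolic stabilizers in $H_{n-\ell}$, and only the third has $\gamma w_0$ not orthogonal to $V^{-}_{\ell,\beta}$ — i.e.\ the splitting is on the ``contained'' side, not the open side. (A rank-one check: $W_\ell$ of dimension $3$, $H_{n-\ell}$ a split $\SO_2$ acting on the conic $\cong\mathbb{P}^1$ has two fixed points plus one open orbit.)

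The same $\mathrm{O}$-versus-$\SO$ issue is also what has to be verified, not just ``parity bookkeeping,'' in your cases (1), (2b) and (4) in the orthogonal setting: there one must check that the stabilizer of a contained $X$ in $\mathrm{O}(w_0^\perp)$ meets the determinant $-1$ coset (it does, because $X$ is not maximal isotropic in $w_0^\perp$, or an anisotropic vector orthogonal to $X$ inside $w_0^\perp$ is available), so that no extra splitting occurs; your write-up silently assumes this. With the mechanism corrected as above — contained versus non-contained, emptiness of the contained orbit when $\wi(w_0^\perp)<\dim_E X$, and the two-family phenomenon exactly when $w_0^\perp$ is split of dimension $2\dim_E X$ — the counts in (1)--(4) come out as stated; alternatively, since the paper itself only cites [GRS11, Prop.\ 4.4], an outright citation is acceptable, but the heuristic you supply in its place should not be the square-class one.
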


We consider the case when $G_{n-\ell}$ is not the $F$-split even orthogonal group or the case
when $G_{n-\ell}$ is the $F$-split even orthogonal group with $\ell+\beta<n$. In these cases,
we must have that $\dim X=\beta$.

If $\ell+\beta<\tilde{m}$, then $P'_{w}\bks G_{n-\ell}/ H_{n-\ell}$ consists of two elements. It remains to consider that
$\ell+\beta=\tilde{m}$. If $\ell+\beta<n$, we must have that $\ell+\beta=\tilde{m}<n$ and hence $G_{n-\ell}$ can not be the $F$-split even special orthogonal group.

In this case $\ell+\beta=\tilde{m}<n$, if $G_{n-\ell}$ is an $F$-quasisplit even unitary group, then $\wi(W_{\ell}\cap y^{\perp}_{\kappa})=\wi(W_{\ell})-1$ and
$P'_{w}\bks G_{n-\ell}/ H_{n-\ell}$ has only one element;
if $G_{n-\ell}$ is an odd special orthogonal group, then
$$
{^{\#}P'_{w}\bks G_{n-\ell}/ H_{n-\ell}}
=
\begin{cases}
3, & \text{if}\  \wi(w_{0}^{\perp}\cap W_{\ell})=\tilde{m}-\ell,\\
1, & \text{if}\  \wi(w_{0}^{\perp}\cap W_{\ell})=\tilde{m}-\ell-1;
\end{cases}
$$
and if $G_{n-\ell}$ is an $F$-quasisplit even special orthogonal group (with $\dim V_{0}=2$) or an $F$-quasisplit odd unitary group,
then
$$
{^{\#}P'_{w}\bks G_{n-\ell}/ H_{n-\ell}}
=
\begin{cases}
2, & \text{if}\  \wi(w_{0}^{\perp}\cap W_{\ell})=\tilde{m}-\ell,\\
1, & \text{if}\  \wi(w_{0}^{\perp}\cap W_{\ell})=\tilde{m}-\ell-1.
\end{cases}
$$

It remains to consider the case when $G_{n-\ell}$ is an $F$-split even special orthogonal group with $\ell+\beta=n$. In this case,
$P'_{w}\bks G_{n-\ell}/H_{n-\ell}$ consists of two elements.

Then, we apply Lemmas~\ref{lm:close} and \ref{lm:P-L} to find the nonvanishing summand in the summation \eqref{eq:P-E1}.

 For $\max\{0,j-\ell\}\leq \beta<\tilde{m}-\ell$, $P'_{w}\bks G_{n-\ell}/H_{n-\ell}$ consists of two elements $\gamma_{1}$ and $\gamma_{2}$
such that $\gamma_{1} w_{0}$ is orthogonal to $V^{-}_{\ell,\beta}$ and $\gamma_{2} w_{0}$ is not orthogonal to $V^{-}_{\ell,\beta}$.
If $\gamma w_{0}$ is orthogonal to $V^{-}_{\ell,\beta}$, the  stabilizer $H^{\gamma}_{n-\ell}=H^{\eta}_{n-\ell}$
is a maximal parabolic subgroup of $H_{n-\ell}$, which preserves the isotropic subspace
$w^{t}_{q}V^{+}_{\ell,\beta}\cap w_{0}^{\perp}$.

In this case, by Lemmas \ref{lm:open} and \ref{lm:close}, there may be left with nonzero summands in the
summation \eqref{eq:P-E1}, which are with the representative $\eps_{\beta}$ for $\beta=\max\{0,j-\ell\}$ and with
the representative $\eta=\eta_{\eps,\gamma}$ having the property that $\gamma w_{0}$ is not orthogonal to $V^{-}_{\ell,\beta}$.

For $\beta=\tilde{m}-\ell$, there are six different cases. Also, we have that $\beta=\tilde{m}-\ell>\max\{0,j-\ell\}$.

If $G_n$ is the $F$-split even special orthogonal group, then there are two
$(P_j,P_\ell)$-double cosets corresponding to the pair $(0,\beta)$ and the chosen representatives are $\epsilon_{0,\beta}$ and
$\tilde{\eps}_{0,\beta}$. For these two cases, their stabilizer preserves two maximal isotropic subspace of $W_{\ell}$
with different orientations, and
$P'_{w}\bks G_{n-\ell}/H_{n-\ell}$ consists of one element in both cases with its stabilizer
$H^{\gamma}_{n-\ell}=H^{\eta}_{n-\ell}$ being
a maximal parabolic subgroup. Hence by Lemma \ref{lm:close}, the corresponding summands are all zero.

If $G_n$ is not the $F$-split even special orthogonal group and $\wi(W_{\ell}\cap y^{\perp}_{\kappa})=\wi(W_{\ell})-1$, there is only one double coset whose stabilizer is a maximal parabolic subgroup of $H_{n-\ell}$.
Hence by Lemma \ref{lm:close}, the corresponding summand is zero.

If $\wi(W_{\ell}\cap y^{\perp}_{\kappa})=\wi(W_{\ell})$ and
$G_n$ is the odd unitary group or $F$-quasi-split even special orthogonal group, the stabilizers are similar to the case $\beta<\tilde{m}-\ell$ as discussed above.
Hence by Lemmas \ref{lm:open} and \ref{lm:close}, the corresponding summands are all zero.

If $G_n$ is the odd special orthogonal group and $\wi(W_{\ell}\cap y^{\perp}_{\kappa})=\wi(W_{\ell})-1$,  then $P'_{w}\bks G_{n-\ell}/H_{n-\ell}$ consists of three elements and the representatives are chosen
in \cite[(4.33)]{GRS11}. Two stabilizers are maximal parabolic subgroups of $H_{n-\ell}$, and
the third representative $\gamma$ satisfies the property that $\gamma w_{0}$ is not orthogonal to $V^{-}_{\ell,\beta}$.
Hence by Lemmas \ref{lm:open} and \ref{lm:close}, the corresponding summands are all zero.

By the discussions above, we deduce that the corresponding summands are all zero, because of Lemmas \ref{lm:open} and \ref{lm:close}.

In conclusion, we are left with the case where $\beta=\max\{0,j-\ell\}$ and $\gamma$ with the property that
the corresponding stabilizer is not a proper maximal parabolic subgroup of $H^{\gamma}_{n-\ell}$, i.e. $\gamma w_{0}$ is not orthogonal to $V^{-}_{\ell,\beta}$.

In this case, the representative $\eta=\eta_{\eps,\gamma}$ is uniquely determined by $\beta=\max\{0,j-\ell\}$. In fact,
if $j\leq \ell$, then $\beta=0$. It follows that $\eta=\eta_{\eps,\gamma}$ with $\gamma=I_{m-2\ell}$ and
\begin{equation}\label{m:eps}
\eps=\begin{pmatrix} &I_{\ell-j}\\ I_{j}& \end{pmatrix};
\end{equation}
and if $j>\ell$, then $\beta=j-\ell$. It implies that $\eta=\eta_{\eps,\gamma}$ with $\eps=I_{\ell}$ and
\begin{equation}\label{m:gamma}
\gamma=\begin{pmatrix}
&I_{j-\ell}&&&\\I_{\tilde{m}-j}&&&&\\&&I_{V_{0}}&&\\&&&&I_{\tilde{m}-j}\\&&&I_{j-\ell}&
\end{pmatrix}.
\end{equation}
Therefore, we are left with only one summand in the summation \eqref{eq:P-E1} with the above representative, accordingly.

Next we are going to write the only integral more explicitly and get ready to prove that it is eulerian in the next subsection.

If $j\leq \ell$, then $\beta=0$. In this case we have that $P'_{w}=G_{n-\ell}$ and $H^\gamma_{n-\ell}=H_{n-\ell}$
with $\eps$ and $\gamma$ given above. Then the global zeta integral in \eqref{eq:P-E1} has the following expression:
\begin{align}
\CZ(s,\phi_{\tau\otimes\sigma},\varphi_\pi,\psi_{\ell,w_{0}}) \label{eq:P-0}
=&\int_{[H_{n-\ell}]}\varphi(h)\int_{N^{\eta}_{\ell}(\BA)\bks N_{\ell}(\BA)}\nonumber\\
&\int_{[N^{\eta}_{\ell}]}
\lam\phi(\epsilon_{0,0}\eta unh)\psi^{-1}_{\ell,w_0}(un)\ud u\ud n\ud h.
\end{align}
where $[H_{n-\ell}]:=H_{n-\ell}(F)\bks H_{n-\ell}(\BA)$ and $[N^{\eta}_{\ell}]:=N^{\eta}_{\ell}(F)\bks N^{\eta}_{\ell}(\BA)$.
The stabilizers are, respectively, given by
\begin{equation} \label{eq:0-R-l}
R^{\eta}_{\ell,w_0}=\begin{pmatrix}
c&0&0&0&0\\&b&y_{4}&z_{4}&0\\&&e&y'_{4}&0\\&&&b^{*}&0\\&&&&c^{*}
\end{pmatrix}
\end{equation}
with $c,c^*$ being of size $j\times j$, $b,b^*$ of size $(\ell-j)\times(\ell-j)$, and
$e$ of size $(m-2\ell)\times(m-2\ell)$;
and
\begin{equation}\label{eq:0-R-j}
(\eps_{0,0}\eta_{\eps,\gamma})R^{\eta}_{\ell,w_0}(\eps_{0,0}\eta_{\eps,\gamma})^{-1}=
\begin{pmatrix}
c^{*}&0&0&0&0\\&b&y_{4}&z_{4}&0\\&&e&y'_{4}&0\\&&&b^{*}&0\\&&&&c
\end{pmatrix}
\end{equation}
with $c\in Z_{j}$ and $b\in Z_{\ell-j}$. ($Z_f$ is the maximal upper-triangular unipotent subgroup of $\GL_f$.)

If $j>\ell$, then $\beta=j-\ell$. In this case, $\eps=I_{\ell}$ and $\gamma$ is given in \eqref{m:gamma}.
The double coset decomposition $P'_{w}\bks G_{n-\ell}/H_{n-\ell}$ produces two representatives
which, as given in \cite[Section 4.4]{GRS11}, are $\gamma=I_{m-2\ell}$ and the $\gamma$ as given in \eqref{m:gamma}.

For the representative $\gamma=I_{m-2\ell}$, the corresponding stabilizer $H^\gamma_{n-\ell}$ is a proper maximal
parabolic subgroup. Then, the corresponding integral in~\eqref{eq:P-E1} is zero by Lemma~\ref{lm:close}.

Now for the $\gamma$ as given in \eqref{m:gamma}, we have that the global zeta integral is expressed as
\begin{align}
\CZ(s,\phi_{\tau\otimes\sigma},\varphi_\pi,\psi_{\ell,w_{0}}) \label{eq:P-j-l}
=&\int_{H^{\eta}_{n-\ell}(F)\bks H_{n-\ell}(\BA)}\varphi(h)
\int_{N^{\eta}_{\ell}(\BA)\bks N_{\ell}(\BA)}\nonumber\\
&\int_{[N^{\eta}_{\ell}]}
\lam\phi(\epsilon_{\beta}\eta unh)\psi^{-1}_{\ell,w_0}(un)\ud u\ud n\ud h,
\end{align}
where $[N^{\eta}_{\ell}]=N^{\eta}_{\ell}(F)\bks N^{\eta}_{\ell}(\BA)$.
The stabilizers are given, respectively,
\begin{equation}\label{eq:j-l-R-l}
\eta^{-1}_{\eps,\gamma}R^{\eta}_{\ell,w_0}\eta_{\eps,\gamma}=\begin{pmatrix}
c&0&0&y_{6}&0\\
&d&u&v&y'_{6}\\
&&e&u'&0\\
&&&d^{*}&0\\
&&&&c^{*}
 \end{pmatrix}
 \end{equation}
with $c,c^*$ being of size $\ell\times\ell$, $d,d^*$ of size $(j-\ell)\times(j-\ell)$, and $e$ of size $(m-2j)\times(m-2j)$;
and
\begin{equation} \label{eq:j-l-R-j}
 (\epsilon_{0,\beta}\eta_{\eps,\gamma})R^{\eta}_{\ell,w_0}(\epsilon_{0,\beta}\eta_{\eps,\gamma})^{-1}=
\begin{pmatrix}
d& y'_{6}&u&0&v\\&c^{*}&0&0&0\\&&e&0&u'\\&&&c&y_{6}\\&&&&d^{*}
\end{pmatrix}
\end{equation}
where $c\in Z_{\ell}$.

We conclude this subsection with the following proposition which summarizes the calculations discussed up to this
point.

\begin{prop}\label{pro:GG}
Take notation as given above. If $j\leq \ell$, then $\beta=0$ and the global zeta integral has the following expression:
\begin{align*}
\CZ(s,\phi_{\tau\otimes\sigma},\varphi_\pi,\psi_{\ell,w_{0}})
=&\int_{[H_{n-\ell}]}\varphi(h)\int_{N^{\eta}_{\ell}(\BA)\bks N_{\ell}(\BA)}\\
&\int_{[N^{\eta}_{\ell}]}
\lam\phi(\epsilon_{0,0}\eta unh)\psi^{-1}_{\ell,w_0}(un)\ud u\ud n\ud h,
\end{align*}
where $[H_{n-\ell}]:=H_{n-\ell}(F)\bks H_{n-\ell}(\BA)$ and $[N^{\eta}_{\ell}]:=N^{\eta}_{\ell}(F)\bks N^{\eta}_{\ell}(\BA)$;
and with $\eta=\eta_{\eps,\gamma}$ given explicitly above.
If $j>\ell$, then $\beta=j-\ell$ and the global zeta integral has the following expression:
\begin{align*}
\CZ(s,\phi_{\tau\otimes\sigma},\varphi_\pi,\psi_{\ell,w_{0}})
=&\int_{H^{\eta}_{n-\ell}(F)\bks H_{n-\ell}(\BA)}\varphi(h)
\int_{N^{\eta}_{\ell}(\BA)\bks N_{\ell}(\BA)}\\
&\int_{[N^{\eta}_{\ell}]}
\lam\phi(\epsilon_{0,\beta}\eta unh)\psi^{-1}_{\ell,w_0}(un)\ud u\ud n\ud h,
\end{align*}
where $[N^{\eta}_{\ell}]=N^{\eta}_{\ell}(F)\bks N^{\eta}_{\ell}(\BA)$; and with $\eta=\eta_{\eps,\gamma}$ given explicitly above.
\end{prop}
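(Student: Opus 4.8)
The plan is to assemble the ingredients already established — the reduced formula for the Bessel–Fourier coefficient in Proposition~\ref{bfcc}, the cuspidality vanishing in Lemma~\ref{lm:close}, and the double-coset count in Lemma~\ref{lm:P-L} — and then to carry out the remaining elimination of summands. First I would substitute the expression of Proposition~\ref{bfcc} into the Bessel period \eqref{bpes}, and absorb the inner sum over $\delta\in H^{\eta}_{n-\ell}(F)\bks H_{n-\ell}(F)$ into the outer integral $\int_{[H_{n-\ell}]}\varphi(h)\,\ud h$, collapsing the latter to an integral over $H^{\eta}_{n-\ell}(F)\bks H_{n-\ell}(\BA)$; this is precisely the expression \eqref{eq:P-E1}, which is the starting point for the two cases.

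Next I would cut the summation in \eqref{eq:P-E1} down to a single term. By Proposition~\ref{bfcc} the surviving representatives are $\eps_\beta=\eps_{0,\beta}$ with $\beta\ge\max\{0,j-\ell\}$, and for $\beta>\max\{0,j-\ell\}$ one has $\gamma w_0\perp V^{-}_{\ell,\beta}$. For such $\beta$, Lemma~\ref{lm:P-L} (run through its unitary / odd-orthogonal / quasi-split-even-orthogonal branches, with the Witt-index dichotomy $\wi(w_0^{\perp}\cap W_\ell)\in\{\tilde m-\ell,\tilde m-\ell-1\}$, and separately the $F$-split even orthogonal case $\ell+\beta=n$ with its two double cosets $\eps_{0,\beta},\tilde\eps_{0,\beta}$) shows that any $\gamma$ with $\gamma w_0\perp V^{-}_{\ell,\beta}$ forces $H^{\eta}_{n-\ell}=H^{\gamma}_{n-\ell}$ to be the proper maximal parabolic subgroup of $H_{n-\ell}$ stabilizing $w^{t}_{q}V^{+}_{\ell,\beta}\cap w_0^{\perp}$, so Lemma~\ref{lm:close} kills that summand; the complementary $\gamma$ with $\gamma w_0\not\perp V^{-}_{\ell,\beta}$ was already excluded for $\beta>\max\{0,j-\ell\}$ by Proposition~\ref{bfcc} (that is, by Lemma~\ref{lm:open}). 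Hence only $\beta=\max\{0,j-\ell\}$ survives, and the constraint then pins down $\eta=\eta_{\eps,\gamma}$ uniquely: if $j\le\ell$ then $\beta=0$, $\gamma=I_{m-2\ell}$, $\eps=\begin{pmatrix}&I_{\ell-j}\\I_j&\end{pmatrix}$; if $j>\ell$ then $\beta=j-\ell$, $\eps=I_\ell$ and $\gamma$ is as in \eqref{m:gamma}, the alternative double-coset representative $\gamma=I_{m-2\ell}$ again having a proper maximal parabolic stabilizer and dying by Lemma~\ref{lm:close}.

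Finally I would read off the shape of the one surviving integral. With $\beta$ and $\eta$ fixed, a direct matrix computation from the explicit forms \eqref{eq:stable-l}--\eqref{eq:stable-j} yields the two stabilizers $R^{\eta}_{\ell,w_0}$ and $(\eps_{0,\beta}\eta_{\eps,\gamma})R^{\eta}_{\ell,w_0}(\eps_{0,\beta}\eta_{\eps,\gamma})^{-1}$ displayed in \eqref{eq:0-R-l}--\eqref{eq:0-R-j} (case $j\le\ell$) and \eqref{eq:j-l-R-l}--\eqref{eq:j-l-R-j} (case $j>\ell$); and in the first case one notes that $P'_w=G_{n-\ell}$ and $H^{\gamma}_{n-\ell}=H_{n-\ell}$, so the outer domain is all of $[H_{n-\ell}]$. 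Collecting everything produces the two stated expressions.

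The hard part is not any single manipulation but the exhaustive bookkeeping at the boundary value $\beta=\tilde m-\ell>\max\{0,j-\ell\}$: one must carry the parity convention (that $\dim_E V$ and $\dim_E W_\ell\cap w_0^{\perp}$ have opposite parity) and the twisting element $w_q$ correctly through every branch of Lemma~\ref{lm:P-L}, and in each branch confirm that each double-coset representative $\gamma$ is annihilated either by the cuspidality vanishing (Lemma~\ref{lm:close}, when $H^{\gamma}_{n-\ell}$ is a proper maximal parabolic) or by the non-triviality of $\psi_{\ell,w_0}$ on a suitable unipotent subgroup (Lemma~\ref{lm:open}, when $\gamma w_0\not\perp V^{-}_{\ell,\beta}$), so that nothing new survives beyond $\beta=\max\{0,j-\ell\}$.
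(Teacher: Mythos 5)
Your proposal is correct and follows essentially the same route as the paper: substitute Proposition~\ref{bfcc} into \eqref{bpes}, collapse the $\delta$-sum to reach \eqref{eq:P-E1}, then use the double-coset count of Lemma~\ref{lm:P-L} together with Lemma~\ref{lm:close} (cuspidality) and Lemma~\ref{lm:open} to eliminate all summands except the one with $\beta=\max\{0,j-\ell\}$ and the open-orbit $\gamma$, finally pinning down $\eta$ and reading off the stabilizers \eqref{eq:0-R-l}--\eqref{eq:0-R-j} and \eqref{eq:j-l-R-l}--\eqref{eq:j-l-R-j}. The only slight imprecision is attributing to Lemma~\ref{lm:P-L} the identification of the stabilizer as a proper maximal parabolic when $\gamma w_0\perp V^{-}_{\ell,\beta}$ (that lemma only counts cosets; the stabilizer identification is a separate computation in the text), but this does not affect the argument.
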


We are going to show that the global zeta integrals are eulerian based on Proposition \ref{pro:GG}.
This is done for the two cases, separately.

\subsection{Eulerian products: $0<\ell< j$ case}
In this case, we have that $\beta=j-\ell$. By Proposition~\ref{pro:GG}, the global zeta integral
$\CZ(s,\phi_{\tau\otimes\sigma},\varphi_\pi,\psi_{\ell,w_{0}})$ is equal to the following integral
\begin{equation}\label{gzi31}
\int_{H^{\eta}_{n-\ell}(F)\bks H_{n-\ell}(\BA)}\varphi(h)
\int_{N^{\eta}_{\ell}(\BA)\bks N_{\ell}(\BA)}
\int_{[N^{\eta}_{\ell}]}
\lam\phi(\epsilon_{0,\beta}\eta unh)\psi^{-1}_{\ell,w_0}(un)\ud u\ud n\ud h,
\end{equation}
where $[N^{\eta}_{\ell}]=N^{\eta}_{\ell}(F)\bks N^{\eta}_{\ell}(\BA)$; and with $\eta=\eta_{\eps,\gamma}$ given explicitly above.

First, we want to understand the Fourier coefficient of $\lam\phi$:
\begin{equation}\label{fceta1}
\int_{[N^{\eta}_{\ell}]}
\lam\phi(\epsilon_{0,\beta}\eta uh)\psi^{-1}_{\ell,w_0}(u)\ud u.
\end{equation}
By conjugating the element $\eps_{0,\beta}\eta$ across the variable $u$ and changing the variable by
$$
(\eps_{0,\beta}\eta)u(\eps_{0,\beta}\eta)^{-1}\mapsto \hat{z'},
$$
the Fourier coefficient in \eqref{fceta1} reduces to
\begin{equation}\label{fceta2}
\int_{[Z_\ell']}
\lam\phi(\hat{z'}\epsilon_{0,\beta}\eta h)\psi^{-1}_{\ell,w_0}((\eps_{0,\beta}\eta)^{-1}\hat{z'}(\eps_{0,\beta}\eta))\ud z',
\end{equation}
where $Z_\ell'=(\eps_{0,\beta}\eta)N_\ell^\eta(\eps_{0,\beta}\eta)^{-1}$, whose elements $z'$ are of form
\begin{equation}\label{eq:Z'}
z'=\begin{pmatrix} I_{\beta}&y\\&z\end{pmatrix} \in \Res_{E/F}(\GL_{j})
\end{equation}
with $z\in Z_{\ell}$, where $g\in\Res_{E/F}(\GL_j)$ is identified with its embedding $\hat{g}=(g,I_{m-2j})$
into the Levi subgroup $\Res_{E/F}(\GL_j)\times G_{n-j}$ of
$G_n$. It follows from the choice of the representatives $\eps_{0,\beta}$ and $\eta$ that the character has following expression:
\begin{equation}\label{eq:psi''}
\psi^{-1}_{\ell,w_0}((\eps_{0,\beta}\eta)^{-1}\hat{z'}(\eps_{0,\beta}\eta))
=
\psi(z_{1,2}+\cdots+z_{\ell-1,\ell}+(-1)^{m+1}\frac{\kappa}{2}y_{\beta,1}),
\end{equation}
where $z=(z_{e,f})_{\ell\times\ell}$. If we write elements $z'$ of $Z'_\ell$ as $z'=(z'_{e,f})_{j\times j}$,
then this character can be written as
\begin{equation}\label{chz'}
\psi_{Z'_\ell,\kappa}(z')
:=
\psi((-1)^{m+1}\frac{\kappa}{2}z_{\beta,\beta+1}+z_{\beta+1,\beta+2}+\cdots+z_{j-1,j}).
\end{equation}
In this way, the Fourier coefficient in \eqref{fceta2} can be written as
\begin{equation}\label{fcz'}
\phi_\lam^{\psi_{Z'_\ell,\kappa}}(h)
:=
\int_{[Z_\ell']}
\lam\phi(\hat{z'}h)\psi_{Z'_\ell,\kappa}(z')\ud z'.
\end{equation}
Hence the global zeta integral $\CZ(s,\phi_{\tau\otimes\sigma},\varphi_\pi,\psi_{\ell,w_{0}})$, which is expressed as in \eqref{gzi31},
is equal to the following integral
\begin{equation}\label{gzi32}
\int_{H^{\eta}_{n-\ell}(F)\bks H_{n-\ell}(\BA)}\varphi(h)
\int_{N^{\eta}_{\ell}(\BA)\bks N_{\ell}(\BA)}
\phi_\lam^{\psi_{Z'_\ell,\kappa}}(\eps_{0,\beta}\eta nh)
\psi^{-1}_{\ell,w_0}(n)\ud n\ud h,
\end{equation}
with $\eta=\eta_{\eps,\gamma}$ given explicitly above.

Next, we want to understand the structure of the subgroup $H^{\eta}_{n-\ell}$.
By \eqref{hgamma}, $H^{\eta}_{n-\ell}=H_{n-\ell}\cap\gamma^{-1}P'_w\gamma$ with
$\eta=\eta_{\eps,\gamma}$, and
$P'_{w}=G_{n-\ell}\cap P^{\eps_{0,\beta}}_{\ell}$ is the parabolic subgroup of $G_{n-\ell}$, preserving
the totally isotropic subspace $V^{+}_{\ell,\beta}$ as in \eqref{vpm}.
Denote by
$$
P^{'\eta}_{w}=P'_{w}\cap \eta H_{n-\ell}\eta^{-1}=P'_{w}\cap \gamma H_{n-\ell}\gamma^{-1}.
$$
Then the elements of $P^{'\eta}_{w}$ are of form:
\begin{equation}\label{eq:stab-P-L}
\begin{pmatrix}
I_{\ell}&&&&&&\\
&d&d_{1}&u&v_{1}&v&\\
&&1&0&0&v'_{1}&\\
&&&e&0&u'&\\
&&&&1&d'_{1}&\\
&&&&&d^{*}&\\
&&&&&&I_{\ell}
\end{pmatrix}
\end{equation}
with $d_{1}+(-1)^{m+1}\frac{\kappa}{2}v_{1}=0$,
where $d_{1}$ and $v_{1}$ are column vectors of size $\beta-1$; $d,d^*$ are of size $(\beta-1)\times(\beta-1)$; and
$e$ belongs to $G_{n-j}$.  Hence we have
\begin{equation}
P^{'\eta}_{w}=(\GL(V^{+}_{\ell,\beta-1})\times G_{n-j})\rtimes U^{\eta}(V^{+}_{\ell,\beta-1}),
\end{equation}
where $U^{\eta}(V^{+}_{\ell,\beta-1})$ is the subgroup of $U(V^{+}_{\ell,\beta-1})$ consisting elements which fixes the vector
$\gamma y_{\kappa}$.
Here $U(V^{+}_{\ell,\beta-1})$ is the unipotent radical of the parabolic subgroup $P(V^{+}_{\ell,\beta-1})$ of
$G_{n-\ell}$ preserving the totally isotropic subspace $V^{+}_{\ell,\beta-1}$.

Let $Q_{\beta-1,\eta}$ be the parabolic subgroup of $H_{n-\ell}$, which preserves the totally isotropic subspace
$(\eta^{-1} V^{+}_{\ell,\beta})\cap y^{\perp}_{\kappa}$ of $W_{\ell}\cap y^{\perp}_{\kappa}$ and has the Levi decomposition
$$
Q_{\beta-1,\eta}=L_{\beta-1,\eta}V_{\beta-1,\eta}.
$$
Recall that the space $W_{\ell}\cap y_{\kappa}^{\perp}$ has the polar decomposition
$$
W_{\ell}\cap y_{\kappa}^{\perp}
=V^+_{\ell,\tilde{m}-\ell-1}\oplus W_0\oplus V^-_{\ell,\tilde{m}-\ell-1},
$$
where $W_0$ is a non-degenerate subspace of $W_{\ell}\cap y_{\kappa}^{\perp}$
with the same anisotropic kernel as $W_{\ell}\cap y_{\kappa}^{\perp}$
and with $\dim_EW_0=\dim_EV_0+1\leq 3$. In particular, if
$w_0=y_\kappa=e_{\tilde{m}}+(-1)^{m+1}\frac{\kappa}{2}e_{-{\tilde{m}}}$,
then $W_0=\Span\cpair{y_{-\kappa}}\oplus V_0$, otherwise, $W_0$ has Witt index $1$.
Then it is easy to check that
$$
(\eta^{-1} V^{+}_{\ell,\beta})\cap y^{\perp}_{\kappa}
=\Span\cpair{e_{\tilde{m}-j+\ell+1},\dots,e_{\tilde{m}-1}}
=V^+_{\tilde{m}-\beta,\beta-1},
$$
and
$$
L_{\beta-1,\eta}=\GL(V^+_{\tilde{m}-\beta,\beta-1})\times H_{n-j+1},
$$
where $H_{n-j+1}:=\RU(q_{_{W_{j-1}\cap y^{\perp}_{\kappa}}})$ with
$$
W_{j-1}\cap y^{\perp}_{\kappa}
= V^{+}_{\ell,\tilde{m}-j}\oplus W_0\oplus V^{-}_{\ell, \tilde{m}-j}.
$$
It follows that
$$
\GL(V^+_{\tilde{m}-\beta,\beta-1})=\GL((\eta^{-1} V^{+}_{\ell,\beta})\cap y^{\perp}_{\kappa})=\eta^{-1}\GL(V^{+}_{\ell,\beta-1})\eta\subset
H^{\eta}_{n-\ell},
$$
and
$$
V_{\beta-1,\eta}=\eta^{-1}U^{\eta}(V^{+}_{\ell,\beta-1})\eta\subset H^{\eta}_{n-\ell}.
$$
It is easy to check that
$$
\eta^{-1}W_{j}=V^{+}_{\ell,\tilde{m}-j}\oplus V_{0}\oplus V^{-}_{\ell, \tilde{m}-j}=y_{-\kappa}^{\perp}\cap (W_{j-1}\cap y^{\perp}_{\kappa}).
$$
Hence we have
$$
\RU(q_{_{\eta^{-1}W_{j}}})
=\eta^{-1}\RU(q_{_{W_j}})\eta
=\eta^{-1}G_{n-j}\eta\subset H^{\eta}_{n-\ell}.
$$
Putting together all these subgroups, we obtain the structure of $H^{\eta}_{n-\ell}$:
\begin{equation}\label{heta}
H^{\eta}_{n-\ell}
=(\GL(V^+_{\tilde{m}-\beta,\beta-1})\times \RU(q_{_{\eta^{-1}W_{j}}}))\rtimes V_{\beta-1,\eta}.
\end{equation}

Finally, we are ready to consider partial Fourier expansion of the cuspidal automorphic forms $\varphi_\pi$ on
$H_{n-\ell}(\BA)$. Let $Z^\eta_{\ell,\beta-1}$ be the maximal unipotent subgroup of $\GL(V^+_{\tilde{m}-\beta,\beta-1})$ consisting
of elements of following type:
$$
\eta^{-1}\begin{pmatrix}
I_{\ell}&&&&\\
&d&&&\\
&&I_{m-2j+2}&&\\
&&&d^{*}&\\
&&&&I_{\ell}
\end{pmatrix}\eta
$$
with $d\in Z_{\beta-1}$. Then $N^\eta_{\ell,\beta-1}=Z^\eta_{\ell,\beta-1}V_{\beta-1,\eta}$ is a unipotent subgroup of $H_{n-\ell}$ of the
type as defined in \eqref{nell} with the corresponding character defined as in \eqref{chw0}, but using $y_{-\kappa}$.
It is easy to check that the corresponding stabilizer $H^{y_{-\kappa}}_{n-j+1}$ is equal to $\RU(q_{_{\eta^{-1}W_{j}}})$, which
is isomorphic to $G_{n-j}$.

Define $C_{\beta-1,\eta}:=V_{\beta-1,\eta}\cap V_{\beta,\eta}$, which is also equal to
$$
\cpair{u\in V_{\beta-1,\eta}\mid u\cdot e_{\tilde{m}}=e_{\tilde{m}}}
$$
and is a normal subgroup of $H^\eta_{n-\ell}$. It follows that
$$
C_{\beta-1,\eta}\bks H^{\eta}_{n-\ell}\cong P^{1}_{\beta}\times H^{y_{-\kappa}}_{n-j+1},
$$
where $P^{1}_{\beta}$ is the mirabolic subgroup of $\Res_{E/F}(\GL_{\beta})$ given by
$$
P^{1}_{\beta}=\cpair{\begin{pmatrix} d&d_{1}\\0&1\end{pmatrix}\in \Res_{E/F}(\GL_{\beta})}.
$$
Going back to the expression \eqref{gzi32} of
$\CZ(s,\phi_{\tau\otimes\sigma},\varphi_\pi,\psi_{\ell,w_{0}})$, the inner integral
\begin{equation}\label{Phi}
\Phi(h):=\int_{N^{\eta}_{\ell}(\BA)\bks N_{\ell}(\BA)}
\phi_\lam^{\psi_{Z'_\ell,\kappa}}(\eps_{0,\beta}\eta nh)
\psi^{-1}_{\ell,w_0}(n)\ud n
\end{equation}
as function in $h$, is left $C_{\beta-1,\eta}(\BA)$-invariant.
We recall that $N_{\ell}$ consists of elements of form
$$
\begin{pmatrix}
c&x_{1}&x_{2}&x_{3}&y_{6}&x_{4}&x_{5}\\
&I_{\tilde{m}-j}&&&&&x'_{4}\\
&&I_{j-\ell}&&&&y'_{6}\\
&&&I_{m-2\tilde{m}}&&&x'_{3}\\
&&&&I_{j-\ell}&&x'_{2}\\
&&&&&I_{\tilde{m}-j}&x'_{1}\\
&&&&&&c^{*}
\end{pmatrix}
$$
where $c\in Z_{\ell}$ and the stabilizer $N^{\eta}_{\ell}$ consists element of form
$$
\begin{pmatrix}
c&0&0&0&y_{6}&0&0\\
&I_{\tilde{m}-j}&&&&&0\\
&&I_{j-\ell}&&&&y'_{6}\\
&&&I_{m-2\tilde{m}}&&&0\\
&&&&I_{j-\ell}&&0\\
&&&&&I_{\tilde{m}-j}&0\\
&&&&&&c^{*}
\end{pmatrix}.
$$
Then $N^{\eta}_{\ell}\bks N_{\ell}$ is isomorphic to the subgroup consisting of elements of form
$$
\begin{pmatrix}
I_{\ell}&x_{1}&x_{2}&0&x_{3}\\
&I_{j-\ell}&&&0\\
&&I_{m-2j}&&x'_{2}\\
&&&I_{j-\ell}&x'_{1}\\
&&&&I_{\ell}
\end{pmatrix}
$$
and $\psi_{\ell,\kappa}$ is not trivial on $x_{2}$. In details,
$\psi_{\ell,\kappa}\vert_{N^{\eta}_{\ell}\bks N_{\ell}}=\psi((x_{2})_{\ell,j-\ell}).$

The stabilizer $(\eps_{0,\beta}\eta)N^{\eta}_{\ell}(\eps_{0,\beta}\eta)^{-1}$ in $P_{j}$
consists of elements of form
$$
\begin{pmatrix}
I_{j-\ell}&y'_{6}&&&\\
&c^{*}&&&\\
&&e&&\\
&&&c&y_{6}\\
&&&&I_{j-\ell}
\end{pmatrix}
$$
The integral domain $N^{\eta}_{\ell}\bks N_{\ell}$ under adjoint action of $\eps_{0,\beta}\eta$ is
a subgroup $U^{-}_{j,\eta}$ of $U^{-}_{j}$ (opposite of the unipotent radical $U_{j}$) consisting
of elements of form
\begin{equation}\label{eq:U-j-eta}
\begin{pmatrix}
I_{j-\ell}&&&&\\
&I_{\ell}&&&\\
&x'_{2}&I_{m-2j}&&\\
x_{1}&x_{3}&x_{2}&I_{\ell}&\\
&x'_{1}&&&I_{j-\ell}
\end{pmatrix}.
\end{equation}
Denote by $\psi_{(j+1,j)}$ the character over
$(\eps_{0,\beta}\eta) N^{\eta}_{\ell}\bks N_{\ell} (\eps_{0,\beta}\eta)^{-1}$, given by
$\psi_{(j+1,j)}(n)=\psi(n_{j+1,j})$. Indeed, this character is associated to the negative root of
the simple root $e_{j}-e_{j+1}$ corresponding to the maximal parabolic subgroup $P_{\ell}$.

Recall that $\eta^{-1}C_{\beta-1,\eta}\eta$ consists of elements of form
$$
\begin{pmatrix}
I_{\ell}&&&&&&\\
&I_{\beta-1}&0&0&u&0&\\
&&1&0&0&u'&\\
&&&I_{m-2j}&0&0&\\
&&&&1&0&\\
&&&&&I_{\beta-1}&\\
&&&&&&I_{\ell}
\end{pmatrix}.
$$
It follows that $N^{\eta}_{\ell,\beta-1}=Z^{\eta}_{\ell,\beta-1}V_{\beta-1,\eta}=Z_{\beta}C_{\beta-1,\eta}$.
As a subgroup of $P_{j}$,
the stabilizer $(\eps_{0,\beta}\eta)N^{\eta}_{\ell,\beta-1}(\eps_{0,\beta}\eta)^{-1}$ consists of
elements of form
$$
\begin{pmatrix}
d&d_{1}&0&u&0&v_{1}&v\\
&1&&&&&v'_{1}\\
&&I_{\ell}&&&&0\\
&&&I_{m-2j}&&&u'\\
&&&&I_{\ell}&&0\\
&&&&&1&d'_{1}\\
&&&&&&d^{*}
\end{pmatrix},
$$
where $d\in Z_{\beta-1}$. Note that $(\eps_{0,\beta}\eta)Z_{\beta}(\eps_{0,\beta}\eta)^{-1}$
consists of elements of the above form with all matrices being zero except $d$ and $d_{1}$.

It follows that the expression in \eqref{gzi32} of the global zeta integral
$\CZ(s,\phi_{\tau\otimes\sigma},\varphi_\pi,\psi_{\ell,w_{0}})$ is equal to
\begin{equation}\label{gzi33}
\int_{H^{\eta}_{n-\ell}(F)C_{\beta-1,\eta}(\BA)\bks H_{n-\ell}(\BA)}\Phi(h)\int_{[C_{\beta-1,\eta}]}
\varphi_\pi(ch)\ud c \ud h,
\end{equation}
where $[C_{\beta-1,\eta}]:=C_{\beta-1,\eta}(F)\bks C_{\beta-1,\eta}(\BA)$, as before.

We denote the inner integration $\int_c$ by
$$
\varphi_\pi^{C_{\beta-1,\eta}}(h)
=\int_{[C_{\beta-1,\eta}]}\varphi_\pi(ch)\ud c.
$$
The integral in \eqref{gzi33} becomes
\begin{equation}\label{gzi34}
\int_{H^{\eta}_{n-\ell}(F)C_{\beta-1,\eta}(\BA)\bks H_{n-\ell}(\BA)}\Phi(h)
\varphi_\pi^{C_{\beta-1,\eta}}(h)\ud h.
\end{equation}

Now we are in the standard step in the global unfolding process using partial Fourier expansion
along the mirabolic subgroup $P^{1}_{\beta}$.
Both functions $\Phi(h)$ and $\varphi^{C_{\beta-1,\eta}}(h)$ are automorphic on $P^{1}_{\beta}(\BA)$ and
$\varphi^{C_{\beta-1,\eta}}(h)$ is
cuspidal because of the cuspidality of $\varphi_\pi(h)$. Following the standard Fourier expansion
of cuspidal automorphic forms
on general linear group (\cite{S74} and \cite{PS79}, see also \cite{JL12}), we have
\begin{equation} \label{eq:FE}
\varphi_\pi^{C_{\beta-1,\eta}}(h)
=\sum_{d\in Z_{\beta-1}(F)\bks \GL_{\beta-1}(E)}\CB^{\psi^{-1}_{\beta-1,y_{-\kappa}}}(\varphi_\pi)
\ppair{\eta^{-1}\begin{pmatrix}I_{\ell}&&\\&d&\\&&1 \end{pmatrix}^{\wedge}\eta h},
\end{equation}
which converges absolutely and uniformly in $g$ varying in compact subsets.
Recall that the Bessel-Fourier coefficient with respect to $\psi_{\beta-1,y_{-\kappa}}$ is defined as
in \eqref{bfc} by
$$
\CB^{\psi^{-1}_{\beta-1,y_{-\kappa}}}(\varphi_\pi)(h)
=
\int_{N^\eta_{\ell,\beta-1}(F)\bks N^\eta_{\ell,\beta-1}(\BA)}\varphi_{\pi}(nh)\psi_{\beta-1,y_{-\kappa}}(n)\ud n.
$$
By using \eqref{eq:FE}, the expression \eqref{gzi34} of
$\CZ(s,\phi_{\tau\otimes\sigma},\varphi_\pi,\psi_{\ell,w_{0}})$ is equal to
\begin{equation}\label{gzi35}
\int_{Z_{\beta}(F)H^{y_{-\kappa}}_{n-j+1}(F)C_{\beta-1,\eta}(\BA)\bks H_{n-\ell}(\BA)}\Phi(h)
\CB^{\psi^{-1}_{\beta-1,y_{-\kappa}}}(\varphi_\pi)(h)\ud h.
\end{equation}
By pulling out the integration on $Z_{\beta}(F)\bks Z_{\beta}(\BA)$ and using the fact that
$\CB^{\psi^{-1}_{\beta-1,y_{-\kappa}}}(\varphi_\pi)(h)$ is
left $(Z_{\beta}(\BA),\psi^{-1}_{\beta-1,y_{-\kappa}})$-quasi-invariant, the integral in \eqref{gzi35} is
equal to
\begin{equation}\label{gzi36}
\int_{H^{y_{-\kappa}}_{n-j+1}(F)N^\eta_{\ell,\beta-1}(\BA)\bks H_{n-\ell}(\BA)}
\CB^{\psi^{-1}_{\beta-1,y_{-\kappa}}}(\varphi_\pi)(h)
\int_{[Z_{\beta}]}\Phi(zh)\psi^{-1}_{\beta-1,y_{-\kappa}}(z)\ud z\ud h,
\end{equation}
where $N^\eta_{\ell,\beta-1}=Z_{\beta}C_{\beta-1,\eta}$ is as before and $[Z_{\beta}]:=Z_{\beta}(F)\bks Z_{\beta}(\BA)$.

The inner integration
$$
\int_{[Z_{\beta}]}\Phi(zh)\psi^{-1}_{\beta-1,y_{-\kappa}}(z)\ud z
$$
can be calculated more explicitly. By \eqref{Phi}, it is equal to
\begin{equation}\label{Phi1}
\int_{[Z_{\beta}]}\int_{N^{\eta}_{\ell}(\BA)\bks N_{\ell}(\BA)}
\phi_\lam^{\psi_{Z'_\ell,\kappa}}(\eps_{0,\beta}\eta nzh)
\psi^{-1}_{\ell,w_0}(n)\ud n\psi^{-1}_{\beta-1,y_{-\kappa}}(z)\ud z.
\end{equation}

The element $(\eps_{0,\beta}\eta)z(\eps_{0,\beta}\eta)^{-1}$ is given as above.
Combining this subgroup with $N^{\eta}_{\ell}$, one obtains a subgroup
$(\eps_{0,\beta}\eta)N^{\eta}_{\ell}Z_{\beta}(\eps_{0,\beta}\eta)^{-1}$ of $P_{j}$ consisting of elements of form
\begin{equation}
\begin{pmatrix}
d&d_{1}&(y'_{6})_{*,*}&&&&\\
&1&(y'_{6})_{\beta,*}&&&&\\
&&c^{*}&&&&\\
&&&I_{m-2j}&&&\\
&&&&c&(y_{6})_{*,\beta}&(y_{6})_{*,*}\\
&&&&&1&d'_{1}\\
&&&&&&d
\end{pmatrix}.
\end{equation}
Define
\begin{align*}
\phi^{Z_{j},\kappa}(h)=\int_{[Z_{\beta}]}\phi^{\psi_{Z'_{\ell},\kappa}}_{\lam}(\eps_{0,\beta}\eta z h)\psi^{-1}_{\beta-1,y_{-\kappa}}\ud z.
\end{align*}
Then,
$$
\phi^{Z_{j},\kappa}(h)=\int_{[Z_{j}]}\phi_{\lam}(zh)\psi_{Z_{j},\kappa}\ud z,
$$
where $\psi_{Z_{j},\kappa}(z)$ is given by
\begin{equation}
\psi(-z_{1,2}-\cdots-z_{\beta-1,\beta}+(-1)^{m+1}\frac{\kappa}{2}z_{\beta,\beta+1}+z_{\beta+1,\beta+2}+\cdots+z_{j-1,j})
\end{equation}
with $\beta=j-\ell$. Hence,
\begin{eqnarray*}
\int_{[Z_{\beta}]}\Phi(zh)\ud z
&=&
\int_{N_{\ell}^{\eta}(\BA)\bks N_{\ell}(\BA)}\phi^{Z_{j},\kappa}(\eps_{0,\beta}\eta n h)
\psi^{-1}_{\ell,\kappa}(n)\ud n\\
&=&
\int_{U^{-}_{j,\eta}(\BA)}\phi^{Z_{j},\kappa}(n\eps_{0,\beta}\eta h)\psi_{j+1,j}(n)\ud n.
\end{eqnarray*}
Denote the last integral by
$$
\CJ_{\ell,\kappa}(\phi^{Z_{j,\kappa}})(h)=\int_{U^{-}_{j,\eta}(\BA)}\phi^{Z_{j},\kappa}(nh)\psi_{j+1,j}(n)\ud n.
$$
Recall that the group $U^{-}_{g,\eta}$ consists of elements of form \eqref{eq:U-j-eta}.

Therefore, we obtain, from \eqref{gzi33} and \eqref{gzi34}, that the global zeta integral
$\CZ(s,\phi_{\tau\otimes\sig},\varphi_{\pi},\psi_{\ell,w_{0}})$ equals
$$
\int_{R^{\eta}_{\ell,\beta-1}(\BA)\bks H_{n-\ell}(\BA)}
\int_{[H^{\eta}_{n-\ell}]}\CB^{\psi^{-1}_{\beta-1,y_{-\kappa}}}(\varphi_\pi)(xh)
\CJ_{\ell,\kappa}(\phi^{Z_{j,\kappa}})(\eps_{0,\beta}\eta xh)\ud x\ud h,
$$
where $[H^{\eta}_{n-\ell}]:=H^{\eta}_{n-\ell}(F)\bks H^{\eta}_{n-\ell}(\BA)$.

\begin{prop}[Case $(j>\ell)$]\label{prop:j>l}
Let $E(\phi_{\tau\otimes\sig},s)$ be the Eisenstein series on $G_n(\BA)$ as in
\eqref{es} and $\pi$ be an irreducible cuspidal automorphic representation of
$H_{n-\ell}(\BA)$. The global zeta integral
$\CZ(s,\phi_{\tau\otimes\sig},\varphi_{\pi},\psi_{\ell,w_{0}})$ as in
\eqref{bpes} is equal to
$$
\int_{R^{\eta}_{\ell,\beta-1}(\BA)\bks H_{n-\ell}(\BA)}
\int_{[H^{\eta}_{n-\ell}]}\CB^{\psi^{-1}_{\beta-1,y_{-\kappa}}}(\varphi_\pi)(xh)
\CJ_{\ell,\kappa}(\phi^{Z_{j,\kappa}})(\eps_{0,\beta}\eta xh)\ud x\ud h
$$
with $[H^{\eta}_{n-\ell}]:=H^{\eta}_{n-\ell}(F)\bks H^{\eta}_{n-\ell}(\BA)$.
\end{prop}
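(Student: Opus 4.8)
The plan is to assemble Proposition \ref{prop:j>l} from the chain of reductions carried out above, specialized to the case $j>\ell$, where Proposition \ref{pro:GG} leaves exactly one surviving summand: the one with $\beta=j-\ell$, $\eps=I_\ell$, and $\gamma$ as in \eqref{m:gamma}. Starting from the expression \eqref{gzi31}, I would first analyze the innermost integral over $[N^\eta_\ell]$. Conjugating $\eps_{0,\beta}\eta$ past the unipotent variable and relabeling $(\eps_{0,\beta}\eta)u(\eps_{0,\beta}\eta)^{-1}\mapsto\widehat{z'}$ turns this into the Fourier coefficient $\phi_\lam^{\psi_{Z'_\ell,\kappa}}$ of \eqref{fcz'}, whose character is the one computed in \eqref{eq:psi''}--\eqref{chz'}. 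This step is where the precise matrix form of the representatives $\eps_{0,\beta}$ and $\eta$ must be used in order to pin down the character; it is a purely mechanical computation inside $\GL_m(E)$. The outcome is the expression \eqref{gzi32} for $\CZ(s,\phi_{\tau\otimes\sigma},\varphi_\pi,\psi_{\ell,w_{0}})$.

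Next I would record the structure of the stabilizer $H^\eta_{n-\ell}$. Identifying the totally isotropic subspace $(\eta^{-1}V^+_{\ell,\beta})\cap y^\perp_\kappa=V^+_{\tilde{m}-\beta,\beta-1}$ and the non-degenerate space $\eta^{-1}W_j$ yields the semidirect-product decomposition \eqref{heta}. The crucial object is the normal subgroup $C_{\beta-1,\eta}=V_{\beta-1,\eta}\cap V_{\beta,\eta}$ of $H^\eta_{n-\ell}$, together with the isomorphism $C_{\beta-1,\eta}\bks H^\eta_{n-\ell}\cong P^1_\beta\times H^{y_{-\kappa}}_{n-j+1}$ onto the product of a mirabolic subgroup and a smaller classical group. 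Reading off the matrix entries of $N_\ell$ and $N^\eta_\ell$, one checks that the inner function $\Phi(h)$ of \eqref{Phi} is left $C_{\beta-1,\eta}(\BA)$-invariant, which allows the $C_{\beta-1,\eta}$-integration to be transferred from $\Phi$ onto $\varphi_\pi$. This produces the inner period $\varphi_\pi^{C_{\beta-1,\eta}}$ and the expression \eqref{gzi34}.

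The core of the argument is then the standard mirabolic unfolding. Since $\varphi_\pi^{C_{\beta-1,\eta}}$ is a cuspidal automorphic function on $P^1_\beta(\BA)$ by cuspidality of $\varphi_\pi$, the Fourier expansion of cuspidal forms on $\GL_\beta$ (\cite{S74,PS79}, see also \cite{JL12}) gives \eqref{eq:FE}, converging absolutely and locally uniformly; interchanging it with the remaining integrations — legitimate for $\Re(s)$ large, where the Eisenstein series converges absolutely — and collapsing the resulting sum over $Z_{\beta-1}(F)\bks\GL_{\beta-1}(E)$ against the $H^{y_{-\kappa}}_{n-j+1}$-integral yields \eqref{gzi35}. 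Pulling out the $Z_\beta$-period and using the $(Z_\beta,\psi^{-1}_{\beta-1,y_{-\kappa}})$-quasi-invariance of $\CB^{\psi^{-1}_{\beta-1,y_{-\kappa}}}(\varphi_\pi)$ gives \eqref{gzi36}. Finally I would evaluate the $Z_\beta$-integral of $\Phi$ explicitly: conjugating $(\eps_{0,\beta}\eta)Z_\beta(\eps_{0,\beta}\eta)^{-1}$ and combining it with the image of $N^\eta_\ell$ builds up the full upper-triangular unipotent $Z_j$ of $\GL_j$, the characters $\psi^{-1}_{\ell,w_0}$ and $\psi^{-1}_{\beta-1,y_{-\kappa}}$ fuse into the single character $\psi_{Z_j,\kappa}$, and what remains of the $N^\eta_\ell\bks N_\ell$-integration becomes the integral over $U^-_{j,\eta}$ defining $\CJ_{\ell,\kappa}(\phi^{Z_j,\kappa})$. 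Substituting back produces the asserted formula, first for $\Re(s)$ large and then for all $s$ by meromorphic continuation of the Eisenstein series.

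I expect the main obstacle to be not any single deep idea but the disciplined tracking of the conjugations: deciding, for each one-parameter unipotent subgroup of $N_\ell$, whether its image under $\eps_{0,\beta}\eta$ lands inside $U_j$ or inside $U^-_j$, and verifying that the several characters in play ($\psi_{\ell,w_0}$, $\psi_{Z'_\ell,\kappa}$, $\psi_{\beta-1,y_{-\kappa}}$, $\psi_{j+1,j}$) assemble consistently into $\psi_{Z_j,\kappa}$ with the correct signs and the factor $(-1)^{m+1}\frac{\kappa}{2}$. A secondary point needing care is the justification of each interchange of summation and integration, which relies on the absolute convergence of the Eisenstein series for $\Re(s)$ large together with the uniform convergence of the mirabolic Fourier expansion.
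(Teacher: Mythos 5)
Your proposal reproduces the paper's own derivation step for step: the reduction via Proposition \ref{pro:GG} to the single orbit with $\beta=j-\ell$, the conjugation producing $\phi_\lam^{\psi_{Z'_\ell,\kappa}}$ and \eqref{gzi32}, the structure \eqref{heta} of $H^\eta_{n-\ell}$ with the normal subgroup $C_{\beta-1,\eta}$ and the $C_{\beta-1,\eta}(\BA)$-invariance of $\Phi$, the mirabolic Fourier expansion \eqref{eq:FE} leading to \eqref{gzi35}--\eqref{gzi36}, and the final evaluation of the $Z_\beta$-integral fusing the characters into $\psi_{Z_j,\kappa}$ and producing $\CJ_{\ell,\kappa}(\phi^{Z_{j,\kappa}})$ over $U^-_{j,\eta}$. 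This is essentially the same argument as in the paper, with the convergence justifications for $\Re(s)$ large handled in the same way, so nothing further is needed.
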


In order to show the integral expression for the global zeta integral
$\CZ(s,\phi_{\tau\otimes\sig},\varphi_{\pi},\psi_{\ell,w_{0}})$ as in Proposition \ref{prop:j>l},
it is enough to show that the inner integral
\begin{equation}\label{ii}
\int_{[H^{\eta}_{n-\ell}]}\CB^{\psi^{-1}_{\beta-1,y_{-\kappa}}}(\varphi_\pi)(xh)
\CJ_{\ell,\kappa}(\phi^{Z_{j,\kappa}})(\eps_{0,\beta}\eta xh)\ud x
\end{equation}
is an eulerian product. In fact, for a fixed $h$, as a function in $x$,
$\CJ_{\ell,\kappa}(\phi^{Z_{j,\kappa}})(\eps_{0,\beta}\eta xh)$ belongs to the
space of automorphic representation $\sig$ of $G_{n-j}(\BA)$. Hence, for a fixed $h$,
this above inner integral is essentially the standard Bessel period for the pair $(\pi,\sig)$
as defined in \eqref{bp}. By the local uniqueness of the Bessel models
(\cite{AGRS10}, \cite{SZ12}, \cite{JSZ11} and also \cite{GGP12}), integral \eqref{ii}
can be written as an eulerian product:
\begin{equation}\label{ep1}
\prod_{\nu}<\CB_{\nu}^{\psi^{-1}_{\beta-1,y_{-\kappa_{\nu}}}}(\varphi_{\pi,v})(h),
\CJ_{\ell,\kappa_{\nu}}(\phi_{\tau\otimes\sig,v}^{Z_{j,\kappa_{\nu}}})(\eps_{0,\beta}\eta h)>_{G_{n-j}}.
\end{equation}
Here the local pairing is a linear functional in the $\Hom$-space
$$
\Hom_{G_{n-j}(F_{\nu})}(\CB_{\nu}^{\psi^{-1}_{\beta-1,y_{-\kappa_{\nu}}}}(\pi_{\nu})\otimes\sigma_{\nu},\BC)
$$
with $\CB_{\nu}^{\psi^{-1}_{\beta-1,y_{-\kappa_{\nu}}}}(\pi_{\nu})$ is the local Bessel functional of $\pi_{\nu}$.
The local uniqueness of the Bessel models
(\cite{AGRS10}, \cite{SZ12}, \cite{JSZ11} and also \cite{GGP12}) asserts that the above $\Hom$-space
is at most one-dimensional. One can normalize the local pairing suitable at unramified
local places, so that the eulerian product makes sense.
Hence we obtain the following theorem.

\begin{thm}\label{thm:j>l}
Let $E(\phi_{\tau\otimes\sig},s)$ be the Eisenstein series on $G_n(\BA)$ as in
\eqref{es} and let $\pi$ be an irreducible cuspidal automorphic representation of
$H_{n-\ell}(\BA)$. Assume that the real part of $s$, $\Re(s)$, is large; and that $\pi$ and $\sig$ have a non-zero Bessel period, i.e.
$\CP^{\psi^{-1}_{\beta-1,y_{-\kappa}}}(\varphi_\pi,\varphi_\sig)$ is nonzero for a some choice of data. Then the global zeta integral
$\CZ(s,\phi_{\tau\otimes\sig},\varphi_{\pi},\psi_{\ell,w_{0}})$ is eulerian, i.e. is equal to
$$
\prod_v
\int_{h}
<\CB_v^{\psi^{-1}_{\beta-1,y_{-\kappa}}}(\varphi_{\pi,v})(h),
\CJ_{\ell,\kappa}(\phi_{\tau\otimes\sig,v}^{Z_{j,\kappa}})(\eps_{0,\beta}\eta h)>_{G_{n-j}}\ud h,
$$
where the integration is taken over $R^{\eta}_{\ell,\beta-1}(F_v)\bks H_{n-\ell}(F_v)$, and
the product is taken over all local places.
\end{thm}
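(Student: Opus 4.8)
The plan is to take the expression for $\CZ(s,\phi_{\tau\otimes\sig},\varphi_{\pi},\psi_{\ell,w_{0}})$ established in Proposition~\ref{prop:j>l} and to reduce the theorem to showing that, for each fixed $h$, the inner integral~\eqref{ii} is an Eulerian product; the Eulerian property of $\CZ$ then follows by feeding this factorization back into the outer integration over $R^{\eta}_{\ell,\beta-1}(\BA)\bks H_{n-\ell}(\BA)$. First I would analyze~\eqref{ii} with $h$ fixed. By~\eqref{heta} we have $H^{\eta}_{n-\ell}=\ppair{\GL(V^{+}_{\tilde{m}-\beta,\beta-1})\times\RU(q_{_{\eta^{-1}W_{j}}})}\rtimes V_{\beta-1,\eta}$, in which $\RU(q_{_{\eta^{-1}W_{j}}})$ is exactly the stabilizer $H^{y_{-\kappa}}_{n-j+1}\cong G_{n-j}$ attached to the Gelfand--Graev datum coming from $y_{-\kappa}$. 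Restricted to this copy of $G_{n-j}$, the function $\CB^{\psi^{-1}_{\beta-1,y_{-\kappa}}}(\varphi_\pi)$ is a vector in the Bessel model of $\pi$, while $x\mapsto\CJ_{\ell,\kappa}(\phi^{Z_{j,\kappa}})(\eps_{0,\beta}\eta xh)$ lies in the space of the automorphic representation $\sig$ of $G_{n-j}(\BA)$ occurring in the inducing data $\tau\otimes\sig$ of $E(\phi_{\tau\otimes\sig},s)$. Using the Whittaker-type equivariance of $\CB^{\psi^{-1}_{\beta-1,y_{-\kappa}}}(\varphi_\pi)$ along the maximal unipotent of $\GL(V^{+}_{\tilde{m}-\beta,\beta-1})$, its invariance along $V_{\beta-1,\eta}$, and the $G_{n-j}(F)$-automorphy of the $\sig$-vector, the integral~\eqref{ii} collapses to the standard Bessel period $\CP^{\psi^{-1}_{\beta-1,y_{-\kappa}}}(\varphi_\pi,\,\cdot\,)$ of~\eqref{bp} for the pair $(\pi,\sig)$ of representations of $(H_{n-\ell},G_{n-j})$; although $\sig$ need not be cuspidal, it converges absolutely for $\Re(s)$ large because the cuspidality of $\pi$ makes $\CB^{\psi^{-1}_{\beta-1,y_{-\kappa}}}(\varphi_\pi)$ rapidly decreasing, which dominates the moderate growth inherited from the Eisenstein series.

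Next I would invoke the local uniqueness of Bessel models (\cite{AGRS10}, \cite{SZ12}, \cite{JSZ11}, \cite{GGP12}): at every place $\nu$ the space $\Hom_{G_{n-j}(F_{\nu})}\ppair{\CB_{\nu}^{\psi^{-1}_{\beta-1,y_{-\kappa_{\nu}}}}(\pi_{\nu})\otimes\sig_{\nu},\BC}$ is at most one dimensional. Writing $\pi$ and $\sig$ as restricted tensor products of their local components, the global Bessel period is a $G_{n-j}(\BA)$-invariant pairing between the Bessel model of $\pi$ and $\sig$, hence an element of the global $\Hom$-space; by local multiplicity one the latter is at most one dimensional and is spanned by the product $\prod_{\nu}\langle\,\cdot\,,\,\cdot\,\rangle_{G_{n-j},\nu}$ of local Bessel functionals, which one normalizes so that at each place $\nu$ outside a fixed finite set $S$ its value on the pair of spherical vectors equals $1$ --- this value being nonzero by the unramified computation of Section~4. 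The hypothesis that $\CP^{\psi^{-1}_{\beta-1,y_{-\kappa}}}(\varphi_\pi,\varphi_\sig)$ is nonzero for some choice of data ensures that the global functional, hence each local one, is nonzero, so that the product~\eqref{ep1} is a genuine factorization and not the vacuous identity $0=0$. Substituting~\eqref{ep1} for~\eqref{ii} in Proposition~\ref{prop:j>l}, the resulting integrand on $R^{\eta}_{\ell,\beta-1}(\BA)\bks H_{n-\ell}(\BA)$ is a product over $\nu$ of local functions that are spherical for all $\nu\notin S$, so the adelic integral factors into the asserted product of local integrals over $R^{\eta}_{\ell,\beta-1}(F_{\nu})\bks H_{n-\ell}(F_{\nu})$; the interchange of the adelic integration with the Euler product is legitimate, again for $\Re(s)$ large, by the rapid decay coming from cuspidality of $\pi$.

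The step I expect to be the main obstacle is the identification used in the first paragraph: verifying that~\eqref{ii} is literally the Bessel period of~\eqref{bp}. This amounts to matching, in the conventions of Section~2, the unipotent integrations defining $\CB^{\psi^{-1}_{\beta-1,y_{-\kappa}}}$, the Jacquet-type integral $\CJ_{\ell,\kappa}$ over $U^{-}_{j,\eta}$, and the quotient $[H^{\eta}_{n-\ell}]$ with the Gelfand--Graev and cuspidal data $(\psi_{\beta-1,y_{-\kappa}},\varphi_\sig)$ of the pair $(H_{n-\ell},G_{n-j})$, and in particular checking that the vectors $\CJ_{\ell,\kappa}(\phi^{Z_{j,\kappa}})(\eps_{0,\beta}\eta\,\cdot\,)$, restricted to $G_{n-j}(\BA)$, span a dense subspace of $\sig$ as $\phi_{\tau\otimes\sig}$ varies, so that the $\Hom$-space controlling the factorization is exactly the one to which local uniqueness applies. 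Once this is secured, the remaining steps are a routine adaptation of the corresponding arguments in \cite{GPSR97} and \cite[Chapter 10]{GRS11}.
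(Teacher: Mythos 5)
Your proposal is correct and follows essentially the same route as the paper: starting from Proposition~\ref{prop:j>l}, recognizing the inner integral \eqref{ii} (for fixed $h$) as the standard Bessel period for the pair $(\pi,\sig)$, and invoking the local uniqueness of Bessel models with a suitable normalization at unramified places to obtain the factorization \eqref{ep1} and hence the Euler product. Your extra remarks (on the role of the nonvanishing hypothesis and the factorization of the outer integration) merely spell out steps the paper leaves implicit.
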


The main local result of the paper is to calculate the unramified local integral explicitly
in terms of the local $L$-functions. For the purpose of our investigation of the global tensor product $L$-functions
$L(s,\pi\times\tau)$, it is enough
to consider the case when $j=\ell+1$. We define the local zeta integral
$\CZ_v(s,\phi_{\tau\otimes\sig},\varphi_{\pi},\psi_{\ell,w_{0}})$ to the local eulerian $v$-factor in the product in Theorem \ref{thm:j>l},
which is
\begin{equation}\label{lzi}
\int_{h}
<\CB_v^{\psi^{-1}_{\beta-1,y_{-\kappa}}}(\varphi_{\pi,v})(h),
\CJ_{\ell,\kappa}(\phi_{\tau\otimes\sig,v}^{Z_{j,\kappa}})(\eps_{0,\beta}\eta h)>_{G_{n-j}}\ud h,
\end{equation}
where the integration is taken over $R^{\eta}_{\ell,\beta-1}(F_v)\bks H_{n-\ell}(F_v)$.

\begin{thm}[$L$-function for case $j=\ell+1$]\label{urmL}
With all data being unramified, the local unramified zeta integral $\CZ_v(s,\phi_{\tau\otimes\sig},\varphi_{\pi},\psi_{\ell,w_{0}})$
is equal to the following product
\begin{eqnarray}
&&\prod_{i=1}^r\frac{L(s+\frac{1}{2},\tau_{i,v}\otimes\pi_v)}
{L(s+1,\tau_{i,v}\times\sig_v)L(2s_{i}+1,\tau_{i,v},Asai\otimes\xi^{m})}\nonumber\\
&&\times\prod_{1\leq i<j\leq r}\frac{1}{L(2s+1,\tau_{i,v}\times\tau_{j,v})}\apair{f_{\pi},f_{\sig}}_{G_{n-j}(F_{\nu})},
\end{eqnarray}
where $\apair{f_{\pi},f_{\sig}}_{G_{n-j}(F_{\nu})}$ is independent with $s$.
\end{thm}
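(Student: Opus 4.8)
The plan is to adapt the unramified computation of \cite{GPSR97} to the integral of Theorem~\ref{thm:j>l}, specialised to $j=\ell+1$, so that $\beta=j-\ell=1$. \textbf{First} I would record the simplifications forced by $\beta=1$: the mirabolic $P^1_\beta$, the unipotent group $Z_\beta$, and $Z_{\beta-1}$ are all trivial, the datum $\psi^{-1}_{\beta-1,y_{-\kappa}}$ is the bottom-level Bessel--Fourier character, and by \eqref{heta} the stabiliser $H^\eta_{n-\ell}$ is $\RU(q_{\eta^{-1}W_j})\ltimes V_{0,\eta}$ with $\RU(q_{\eta^{-1}W_j})\cong G_{n-j}$. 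With these identifications the local zeta integral \eqref{lzi} becomes a concrete integral over $R^\eta_{\ell,\beta-1}(F_v)\bs H_{n-\ell}(F_v)$ of the spherical Bessel function attached to $\varphi_{\pi,v}$ paired against the value at $\eps_{0,\beta}\eta h$ of the Jacquet-integrated spherical section $\CJ_{\ell,\kappa}(\phi_{\tau\otimes\sig,v}^{Z_{j,\kappa}})$.

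\textbf{Second}, using an Iwasawa-type decomposition of $H_{n-\ell}(F_v)$ relative to $R^\eta_{\ell,\beta-1}(F_v)$ and a maximal compact $K$, together with the hypothesis that all data (the spherical section of the Eisenstein series, the spherical vectors in $\pi_v$ and $\sig_v$, and the additive characters) are unramified, the $K$-integration collapses and $\CZ_v$ reduces to an integral over a torus $T_v$ of the product of the spherical Bessel function of $\pi_v$, a modulus character, and the restriction to $T_v$ of $\CJ_{\ell,\kappa}(\phi_{\tau\otimes\sig,v}^{Z_{j,\kappa}})(\eps_{0,\beta}\eta\,\cdot\,)$.

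\textbf{Third}, I would evaluate the section datum. Since each $\tau_{i,v}$ is unramified, $\phi_{\tau\otimes\sig,v}$ is the spherical vector in a principal series of $G_n(F_v)$, and $\CJ_{\ell,\kappa}(\phi_{\tau\otimes\sig,v}^{Z_{j,\kappa}})$ is obtained from it by a Whittaker integration along $Z_j$ followed by a Jacquet integral over $U^-_{j,\eta}$; by the Gindikin--Karpelevich and Casselman--Shalika formulas this evaluates to a Whittaker function of $\tau_v$ on $\GL_j(E_v)$ times a spherical vector in $\sig_v$, divided by a product of rank-one unramified factors indexed by the roots crossed by $U^-_{j,\eta}$. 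Collecting those factors yields precisely the denominator of the asserted formula: the roots pairing $\GL_{j_i}$ with $G_{n-j}$ give $\prod_i L(s+1,\tau_{i,v}\times\sig_v)^{-1}$; the ``self'' roots internal to each $\GL_{j_i}$-block give $\prod_i L(2s_i+1,\tau_{i,v},\mathrm{Asai}\otimes\xi^{m})^{-1}$ --- the Asai factor in the unitary case, and the symmetric- or exterior-square factor in the orthogonal case, the quadratic character $\xi$ of $E/F$ entering to the power $m=\dim_E V$ because of the parity difference between $V$ and $W_\ell\cap y_\kappa^\perp$; and the roots pairing distinct blocks $\GL_{j_i},\GL_{j_{i'}}$ give $\prod_{i<i'}L(2s+1,\tau_{i,v}\times\tau_{i',v})^{-1}$. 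The remaining part of the section contributes only an unramified quasi-character of $T_v$.

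\textbf{Finally}, I would substitute the Casselman--Shalika formula for the spherical Bessel function of $\pi_v$ into the torus integral and carry out the sum over the relevant Weyl coset; the Cauchy/Weyl-denominator identity underlying the unramified Rankin--Selberg computations of \cite{JPSS83} and \cite{GPSR97} collapses this sum into $\prod_{i=1}^r L(s+\tfrac12,\tau_{i,v}\otimes\pi_v)$, the tensor-product $L$-factor in the numerator, while the leftover scalar --- manifestly independent of $s$ by its normalisation --- is exactly $\apair{f_\pi,f_\sig}_{G_{n-j}(F_\nu)}$. \textbf{The main obstacle} lies in the third and fourth steps: one must correctly match each contributing root of $G_n$ with the right Asai, Rankin--Selberg, or symmetric/exterior-square $L$-factor (and the character $\xi^{m}$), and then recognise the Weyl sum produced by Casselman--Shalika as a product of tensor-product $L$-functions; in the generality of an arbitrary isobaric $\tau$ and an arbitrary quasi-split $G_n$ of Hermitian type this bookkeeping is what makes the argument of \cite{GPSR97} substantially more delicate.
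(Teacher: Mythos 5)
Your overall plan is the classical ``direct'' unramified computation: reduce to a torus integral, evaluate the section by Gindikin--Karpelevich, and then collapse the remaining sum by substituting a Casselman--Shalika formula for the spherical Bessel function of $\pi_v$. This is not the route the paper takes, and the reason is that your key step does not exist in the required generality: Casselman--Shalika applies to \emph{Whittaker} functions of generic unramified representations, whereas here the functional $\CB_v^{\psi^{-1}_{\beta-1,y_{-\kappa}}}(\pi_v)$ is a Bessel (Gelfand--Graev) functional on $H_{n-\ell}(F_v)$, and $\pi_v$ is an arbitrary unramified representation, not assumed generic. No explicit formula for such spherical Bessel (Whittaker--Shintani) functions is available at this level of generality --- obtaining them is itself the subject of separate work (Khoury's thesis, \cite{Z12}, \cite{Sn12}, \cite{Sn13}) --- so your second and fourth steps (the collapse of the $K$-integration to a torus integral with known integrand, and the Cauchy/Weyl-denominator summation producing $\prod_i L(s+\tfrac12,\tau_{i,v}\otimes\pi_v)$) cannot be carried out as stated. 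The same problem affects your third step: the Jacquet integral over $U^-_{j,\eta}$ is paired against the unknown Bessel function, so Gindikin--Karpelevich alone does not isolate the denominator factors.

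The paper's proof is designed precisely to avoid needing such explicit formulas. It first invokes Bernstein rationality to write $\CZ_\nu=P(\chi_s,\mu)/Q(\chi_s,\mu)$; it computes $Q$ by exhibiting an element $\Phi_0$ of the support ideal in an extended spherical Hecke algebra $\CA_{H_{n-\ell}}$ which forces $\CJ(f_{\chi_s}*\Phi_0)$ to be supported on the open orbit (using the twisted Jacquet module decompositions of Propositions~\ref{pro:Jac-mod} and \ref{pro:Jac-mod-split}), so that $\CZ_\nu(s,f_{\chi_s}*\Phi_0,f_\mu,\psi_{\ell,\kappa})$ is entire and $Q=L(\tfrac12+s,\tau\times\pi)^{-1}d(\chi_\tau,s)$; it then computes $P$ for generic spherical data by a rank reduction whose base case $\ell=0$ is the unramified refined Gross--Prasad identity of \cite{Har12}, combined with the local gamma factors $\gamma_{\ome_i}$ of intertwining operators (Lemma~\ref{lm:gamma}, from \cite{K08} and \cite{Z12}), Weyl invariance of the normalized $P^*$, and the combinatorial identity \eqref{eq:Delta-2}; finally a density-principle argument removes the genericity assumption on $\Pi(\chi_s)$ and $\pi(\mu)$ and splits the treatment into inert and split places --- a dichotomy your sketch does not address but which changes the shape of the Satake parameters, the Asai factor, and the Jacquet-module decompositions. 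To make your approach work you would have to first prove an explicit Casselman--Shalika-type formula for the unramified Bessel functions involved, which is a substantially harder (and here unproved) input.
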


This theorem will be proved in Section 4. It is also interesting to understand the local zeta integrals when $j>\ell+1$. We will come back
to this issue in our future consideration.

\subsection{Eulerian product: $j\leq\ell$ case}
In this section, we consider the case $j\leq\ell<\tilde{m}$. By Proposition~\ref{pro:GG},
we only need to consider the representative $\eps_{0,0}$ and $\eta_{\eps,I_{m-2\ell}}$,
where $\eps$ is defined in \eqref{m:eps}. For simplicity, we denote by
$\eta=\eta_{\eps,I_{m-2\ell}}$.

By \eqref{eq:0-R-l} and \eqref{eq:0-R-j}, we decompose $N^{\eta}_{\ell}$ as $Z_{j}N_{j,\ell-j}$,
where  $Z_{j}$ is identified as a subgroup of $G_n$, which is the maximal unipotent subgroup of
$\GL(V^{+}_{j})$, and
$$
N_{j,\ell-j}=\cpair{\begin{pmatrix}
I_{j}&&&&\\
&b_{\ell-j}&y_{4}&z_{4}&\\
&&I_{m-2\ell}&y'_{4}&\\
&&&b^{*}&\\
&&&&I_{j}
\end{pmatrix} \mid b\in Z_{\ell-j}}.
$$
Note that $N_{j,\ell-j}$ is the unipotent subgroup of $G_{n-j}$ as defined in \eqref{nell}
and the character $\psi_{\ell,\kappa}$ restricted on $N_{j,\ell-j}$ is the character
$\psi_{\ell-j,\kappa}$ of the subgroup $N_{\ell-j}$ (of $G_{n-j}$) as defined in \eqref{chw0},
which is denoted by $\psi_{n-j,\ell-j;\kappa}$.

\begin{align}
&\CZ(s,\phi_{\tau\otimes\sigma},\varphi_\pi,\psi_{\ell,w_{0}}) \label{eq:P-1}\\
=&\int_{[H_{n-\ell}]}\varphi(h)\int_{N^{\eta}_{\ell}(\BA)\bks N_{\ell}(\BA)}
\int_{[N^{\eta}_{\ell}]}
\lam\phi(\epsilon_{0,0}\eta unh)\psi^{-1}_{\ell,w_0}(un)\ud u\ud n\ud h. \nonumber
\end{align}
where $[H_{n-\ell}]:=H_{n-\ell}(F)\bks H_{n-\ell}(\BA)$ and $[N^{\eta}_{\ell}]:=N^{\eta}_{\ell}(F)\bks N^{\eta}_{\ell}(\BA)$.
The inner integral
\begin{equation}\label{ii2}
\int_{[N^{\eta}_{\ell}]}
\lam\phi(\epsilon_{0,0}\eta unh)\psi^{-1}_{\ell,w_0}(u)\ud u
\end{equation}
can be written as the following integral
$$
\int_{[N_{j,\ell-j}]}\int_{[Z_{j}]}
\lam\phi(\eps_{0,0}\eta c u n h)\psi_{\ell,\kappa}^{-1}(cu)\ud c \ud u.
$$
Since $\tau$ is generic, we have a nonzero Whittaker function
$$
\phi_{\lam}^{\psi_{Z_{j},\kappa}}(h)=\int_{[Z_{j}]}\lam\phi(\hat{z}g)\psi_{Z_{j},\kappa}(z)\ud z,
$$
where $\psi_{Z_{j},\kappa}$ is the restriction of $\psi_{\ell,\kappa}$ on $Z_{j}$. Hence the inner integral \eqref{ii2} can be written as
\begin{equation}\label{bp2}
\int_{[N^{\eta}_{\ell}]}
\lam\phi(\epsilon_{0,0}\eta unh)\psi^{-1}_{\ell,w_0}(u)\ud u
=
\CB^{\psi_{n-j,\ell-j,\kappa}^{-1}}(\phi_{\lam}^{\psi_{Z_{j},\kappa}})(\epsilon_{0,0}\eta nh),
\end{equation}
where $\CB^{\psi_{n-j,\ell-j,\kappa}^{-1}}$ is the Bessel period on the group $G_{n-j}(\BA)$ with respect to the subgroup $N_{j,\ell-j}$
and the character $\psi_{n-j,\ell-j,\kappa}$.

Therefore, the global zeta integral has the expression:
\begin{align*}
&\CZ(s,\phi_{\tau\otimes\sigma},\varphi_\pi,\psi_{\ell,w_{0}})\\
=&\int_{[H_{n-\ell}]}\varphi(h)\int_{N^{\eta}_{\ell}(\BA)\bks N_{\ell}(\BA)}
\CB^{\psi_{n-j,\ell-j,\kappa}^{-1}}(\phi_{\lam}^{\psi_{Z_{j},\kappa}})(\epsilon_{0,0}\eta nh)\psi^{-1}_{\ell,w_0}(n)\ud n\ud h
\end{align*}

\begin{prop}[Case $(j\leq \ell)$]\label{pro:ell<j}
Let $E(\phi_{\tau\otimes\sig},s)$ be the Eisenstein series on $G_n(\BA)$ as in
\eqref{es} and $\pi$ be an irreducible cuspidal automorphic representation of
$H_{n-\ell}(\BA)$. The global zeta integral
$\CZ(s,\phi_{\tau\otimes\sig},\varphi_{\pi},\psi_{\ell,w_{0}})$ as in
\eqref{bpes} is equal to
$$
\int_{[H_{n-\ell}]}\varphi(h)\int_{N^{\eta}_{\ell}(\BA)\bks N_{\ell}(\BA)}
\CB^{\psi_{n-j,\ell-j,\kappa}^{-1}}(\phi_{\lam}^{\psi_{Z_{j},\kappa}})(\epsilon_{0,0}\eta nh)\psi^{-1}_{\ell,w_0}(n)\ud n\ud h.
$$
\end{prop}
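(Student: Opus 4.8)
The plan is to read off the claimed identity from the expression \eqref{eq:P-1}, which is precisely the shape that Proposition~\ref{pro:GG} produces when $j\leq\ell$: after the unfolding carried out in Section~\ref{sec:eulerian}, the only surviving double-coset contribution is the one indexed by $\beta=0$, with representative $\eps_{0,0}$ and $\eta=\eta_{\eps,I_{m-2\ell}}$ for $\eps$ as in \eqref{m:eps}, and the outer integral runs over $[H_{n-\ell}]$ (here $H^\eta_{n-\ell}=H_{n-\ell}$). All the integrals below converge absolutely for $\Re(s)$ large, the standing assumption inherited from Lemmas~\ref{lm:r>0}, \ref{lm:open} and \ref{lm:close}. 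So it remains to rewrite the inner integral \eqref{ii2} over $[N^\eta_\ell]$ in the Bessel form appearing in the statement.

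First I would use the explicit block descriptions \eqref{eq:0-R-l} and \eqref{eq:0-R-j} of the stabilizer $R^\eta_{\ell,w_0}$ to factor $N^\eta_\ell=Z_jN_{j,\ell-j}$, where $Z_j$ is the maximal unipotent subgroup of $\GL(V^+_j)$ inside the Levi $\Res_{E/F}(\GL_j)\times G_{n-j}$ of $P_j$, and $N_{j,\ell-j}$ is the unipotent subgroup of $G_{n-j}$ of the shape \eqref{nell}. Splitting the $du$-integration accordingly, I would conjugate $\eps_{0,0}\eta$ across the $Z_j$-variable; by the choice of $\eps$ this carries $Z_j$ onto the standard maximal unipotent of $\GL(V^+_j)$ sitting in $P_j$, on which $\psi_{\ell,w_0}=\psi_{\ell,\kappa}$ restricts to the generic character $\psi_{Z_j,\kappa}$. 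Since $\phi=\phi_{\tau\otimes\sig}$ lies in $\CA(U_j(\BA)M_j(F)\bks G_n(\BA))_{\tau\otimes\sig}$ with $\tau$ generic, the $Z_j$-integral is the (nonzero) Whittaker function
$$
\phi_\lam^{\psi_{Z_j,\kappa}}(h)=\int_{[Z_j]}\lam\phi(\hat z h)\psi_{Z_j,\kappa}(z)\ud z.
$$
Next I would identify the remaining $N_{j,\ell-j}$-integration: matching the superdiagonal coordinates of \eqref{chalpha} against the block form above shows that $\psi_{\ell,\kappa}$ restricted to $N_{j,\ell-j}$ is exactly the Bessel character $\psi_{n-j,\ell-j,\kappa}$ of $G_{n-j}$ attached via \eqref{chw0} to $N_{\ell-j}$ and $y_\kappa$. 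Hence \eqref{ii2} equals $\CB^{\psi_{n-j,\ell-j,\kappa}^{-1}}(\phi_\lam^{\psi_{Z_j,\kappa}})(\eps_{0,0}\eta nh)$, i.e. \eqref{bp2}, and substituting this into \eqref{eq:P-1} yields the asserted expression.

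The main obstacle will be the bookkeeping in the factorization $N^\eta_\ell=Z_jN_{j,\ell-j}$: one must check that conjugation by the Weyl element $\eps$ in \eqref{m:eps} (inside $\eta=\eta_{\eps,I_{m-2\ell}}$) does not entangle the two factors, so that the defining entries of $\psi_{\ell,\kappa}$ distribute cleanly — producing the generic character of $Z_j$ on one part and precisely the Bessel character $\psi_{n-j,\ell-j,\kappa}$ on the other, with no residual cross terms — and that the anisotropic data defining the level-$(n-j)$ Bessel period are those inherited from $(V,q_{_V})$. Once the matrix identifications \eqref{eq:0-R-l}--\eqref{eq:0-R-j} are in place this is a direct computation, and, crucially, it uses no cuspidality of $\varphi_\pi$; the cuspidality is reserved for the subsequent eulerian factorization of the remaining integral.
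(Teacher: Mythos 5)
Your proposal is correct and follows essentially the same route as the paper: starting from Proposition \ref{pro:GG} (i.e.\ the expression \eqref{eq:P-1}), you decompose $N^{\eta}_{\ell}=Z_{j}N_{j,\ell-j}$ via \eqref{eq:0-R-l}--\eqref{eq:0-R-j}, identify the $Z_{j}$-integral as the Whittaker function $\phi_{\lam}^{\psi_{Z_{j},\kappa}}$ using the genericity of $\tau$, and recognize the remaining $N_{j,\ell-j}$-integration as the Bessel coefficient $\CB^{\psi_{n-j,\ell-j,\kappa}^{-1}}$ on $G_{n-j}(\BA)$, exactly as in \eqref{bp2}. Your closing remark is also consistent with the paper's structure: cuspidality of $\varphi_{\pi}$ enters only through Proposition \ref{pro:GG} (via Lemma \ref{lm:close}), not in this rewriting step.
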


It remains to show that the global zeta integral in Proposition~\ref{pro:ell<j} is eulerian. To this end, we need to
to switch the order of the integrations $\int_{h}$ and $\int_{n}$ in
$$
\int_{[H_{n-\ell}]}\varphi(h)\int_{N^{\eta}_{\ell}(\BA)\bks N_{\ell}(\BA)}
\CB^{\psi_{n-j,\ell-j,\kappa}^{-1}}(\phi_{\lam}^{\psi_{Z_{j},\kappa}})(\epsilon_{0,0}\eta nh)\psi^{-1}_{\ell,w_0}(n)\ud n\ud h.
$$
This can be deduced from the following lemma.

\begin{lem}
The automorphic function
$$
\Psi(h)=\int_{N^{\eta}_{\ell}(\BA)\bks N_{\ell}(\BA)}
\CB^{\psi_{n-j,\ell-j,\kappa}^{-1}}(\phi_{\lam}^{\psi_{Z_{j},\kappa}})(\epsilon_{0,0}\eta nh)\psi^{-1}_{\ell,w_0}(n)\ud n
$$
is uniformly moderate growth on $H_{n-\ell}(\BA)$.
\end{lem}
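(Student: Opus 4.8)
The plan is to derive the uniform moderate growth of $\Psi$ from three ingredients: the moderate growth of the inducing section $\lam\phi$, the fact that integration over a compact unipotent quotient against a unitary character preserves moderate growth, and the rapid decay of a generic Whittaker--Bessel coefficient along a unipotent subgroup, which is what makes the remaining non-compact integral converge. Throughout one keeps the standing hypothesis that $\Re(s)$ is large; in particular $\lam_s\phi$ is a smooth, right $K$-finite standard section whose absolute value is majorized by a fixed non-negative standard section taken at the real point $\Re(s)$, so that every estimate below may be carried out with this non-negative majorant in place of $\lam_s\phi$.

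First I would treat the two compact-quotient integrations. The Whittaker coefficient $\phi_\lam^{\psi_{Z_j,\kappa}}$ and then the Bessel--Fourier coefficient $\CB^{\psi_{n-j,\ell-j,\kappa}^{-1}}(\phi_\lam^{\psi_{Z_j,\kappa}})$ appearing in \eqref{bp2} are obtained from $\lam\phi$ by integration over the compact quotients $Z_j(F)\bks Z_j(\BA)$ and $N_{j,\ell-j}(F)\bks N_{j,\ell-j}(\BA)$ against unitary characters; hence each is of moderate growth on $G_n(\BA)$, with exponent and implied constants that can be taken uniform for $\Re(s)$ in a fixed ray. The same argument applied to fixed right translates by the universal enveloping algebra yields \emph{uniform} moderate growth in the sense of Wallach.

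The substantive step is the convergence of $\int_{N^\eta_\ell(\BA)\bks N_\ell(\BA)}\,\ud n$ and the propagation of moderate growth through it. Here I would use that the character $\psi_{\ell,w_0}$ is nondegenerate in the directions transverse to $N^\eta_\ell$: combined with $\psi_{Z_j,\kappa}$ and $\psi_{n-j,\ell-j,\kappa}$, which are already built into $\CB^{\psi_{n-j,\ell-j,\kappa}^{-1}}(\phi_\lam^{\psi_{Z_j,\kappa}})$, the function
$$
n\longmapsto \CB^{\psi_{n-j,\ell-j,\kappa}^{-1}}(\phi_\lam^{\psi_{Z_j,\kappa}})(\epsilon_{0,0}\eta n h)\,\psi_{\ell,w_0}^{-1}(n)
$$
is, for each fixed $h$, the restriction to $N^\eta_\ell(\BA)\bks N_\ell(\BA)$ of a generic Whittaker--Bessel coefficient of the standard section $\lam\phi$. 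Carrying out the Iwasawa decomposition inside $N_\ell$ relative to $N^\eta_\ell$ and using that the non-negative majorant of $\lam_s\phi$, restricted to the open cell determined by $\epsilon_{0,0}\eta$, decays like a power of the modulus character with exponent proportional to $\Re(s)$, one obtains absolute and locally uniform convergence of the integral for $\Re(s)$ large, with a tail bound that is polynomial in $\norm{h}$ uniformly over compacta modulo $N_\ell(\BA)$. Combined with the moderate growth of the majorant, this shows that $\Psi$ --- and, by the same computation, each of its right translates under the enveloping algebra --- is of moderate growth with exponent independent of $s$ in the range considered. Alternatively, the required decay estimate may be quoted directly from the gauge estimates for Whittaker functions used in \cite{GRS11} and, for $G_n$ orthogonal with $r=1$, in \cite{GPSR97}.

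The step I expect to be the main obstacle is this last one: controlling the non-compact integral $\int_{N^\eta_\ell(\BA)\bks N_\ell(\BA)}$ uniformly in $h$. The issue is not convergence at a single point --- that is immediate for $\Re(s)$ large --- but obtaining a bound uniform over $h$ in compacta, which is exactly what makes $\Psi$ of moderate growth and thereby licenses the interchange of $\int_h$ and $\int_n$ in the following step. I would handle it by working entirely with the non-negative majorant of $\lam_s\phi$, so that the only analytic input needed is the absolute convergence of the Fourier--Bessel expansion of a standard section on a Levi subgroup of the form $\GL_j\times G_{n-j}$, which reduces to the classical estimates of Godement and of Jacquet--Piatetski-Shapiro--Shalika; the passage back to $\lam_s\phi$ and to a neighbourhood of the given $s$ is then formal.
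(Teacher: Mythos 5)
Your proposal is correct and is essentially the paper's own argument: the paper disposes of this lemma by citing the orthogonal case in Appendix 2 to \S 5 of \cite{GPSR97}, and what you sketch --- majorizing $\lam_s\phi$ by a non-negative standard section at $\Re(s)$ large, noting that the compact-quotient Fourier and Bessel integrations preserve (uniform) moderate growth, and controlling the remaining non-compact unipotent integral by Godement-type gauge estimates on the open cell --- is precisely the content of that appendix, which you also cite as the fallback.
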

\begin{proof}
The proof is similar to the orthogonal case in Appendix 2 to \S 5  \cite{GPSR97}.
\end{proof}
Since $\varphi_{\pi}$ is rapidly decay, the global zeta integral is equal to
\begin{equation}\label{eq:j<ell-unfactorize}
\int_{N^{\eta}_{\ell}(\BA)\bks N_{\ell}(\BA)}
\int_{[H_{n-\ell}]}\varphi(h)\CB^{\psi_{n-j,\ell-j,\kappa}^{-1}}(\phi_{\lam}^{\psi_{Z_{j},\kappa}})(\epsilon_{0,0}\eta nh)\psi^{-1}_{\ell,w_0}(n)\ud h\ud n.
\end{equation}
Then the inner integral is an $H_{n-\ell}(\BA)$ invariant pairing between $\pi$ and $\CB^{\psi_{n-j,\ell-j,\kappa}^{-1}}(\sig)$. The local pairing is a linear functional in the $\Hom$-space
$$
\Hom_{G_{n-j}(F_{\nu})}(\pi_{\nu}\otimes\CB_\nu^{\psi^{-1}_{\beta-1,y_{-\kappa_\nu}}}(\sigma_\nu),\BC)
$$
with $\CB_\nu^{\psi^{-1}_{\beta-1,y_{-\kappa_\nu}}}(\sig_\nu)$ is the local Bessel functional of $\sig_\nu$.
By the local uniqueness of the Bessel models and a suitable normalization at unramified local places, we can factorize \eqref{eq:j<ell-unfactorize} as follows
$$
\prod_{\nu}\apair{\varphi_{\nu}*h_{\nu},\CB_{\nu}^{\psi_{n-j,\ell-j,\kappa}^{-1}}(\phi_{\lam}^{\psi_{Z_{j},\kappa}})(\epsilon_{0,0}\eta n_{\nu}h_{\nu})}_{\nu}.
$$

Note that in this case, $N^{\eta}_{\ell}\bks N_{\ell}$ consists of elements of form
$$
\begin{pmatrix}
I_{j}&x_{1}&x_{2}&x_{3}&x_{4}\\
&I_{\ell-j}&&&x'_{3}\\
&&I_{m-2\ell}&&x'_{2}\\
&&&I_{\ell-j}&x'_{1}\\
&&&&I_{j}\\
\end{pmatrix}.
$$
The restriction of $\psi_{\ell,\kappa}$ on $N^{\eta}_{\ell}\bks N_{\ell}$ is $\psi((x_{1})_{j,1})$. Under the adjoint action of $\eps_{0,0}\eta$, the integral domain $N^{\eta}_{\ell}\bks N_{\ell}$ is also denoted by $U^{-}_{j,\eta}$, which is a subgroup of the opposite $U^{-}_{j}$, consisting of elements of form
$$
\begin{pmatrix}
I_{j}&&&&\\
x'_{3}&I_{\ell-j}&&&\\
x'_{2}&&I_{m-2\ell}&&\\
x'_{1}&&&I_{\ell-j}&\\
x_{4}&x_{1}&x_{2}&x_{3}&I_{j}\\
\end{pmatrix}
$$
Therefore, the induced character on $U^{-1}_{j,\ell}$ is $\psi^{-1}(n_{m-j,1})$.

We remark that in the family of global zeta integrals, we only use the case when $j=\ell+1$ to calculate the local unramified
zeta integrals to obtain the local $L$-functions as we needed. In all other cases, the global zeta integrals are eulerian. The
unramified calculation will be taken up in our future consideration, and the potential applications to our explicit constructions
of endoscopy correspondences as discussed in \cite{J12} remain to be fully discussed.

\section{Unramified calculation and local $L$-functions}

In this section, we will calculate the local zeta integral for the case $j=\ell+1$ as defined in Theorem~\ref{urmL} over the unramified places.
The quasi-split orthogonal group cases were done in \cite{GPSR97}. In the following, we extend the idea and the method in \cite{GPSR97}
to the quasi-split unitary group cases. It turns out that the argument in this case is much more technically involved, due to the splitting of the
unramified local place of the number field $F$ to the quadratic extension $E$.

To achieve the goal of this section, we reformulate the local zeta integrals through the paring of Bessel models in Subsection 4.1, including
some general statements on the twisted Jacquet modules, which we recall from \cite[Chapter 5]{GRS11}. In Subsection 4.2, we discuss
unramified representations considered in the local zeta integrals and their Satake parameters, with which, we define unramified local
$L$-functions we need. In Subsection 4.3, we specify the local zeta integrals for unramified data by considering the cases when the
unramified local place $\nu$ of $F$ is split or not over $E$. By the Bernstein rationality, the unramified local zeta integrals are
expressed as a rational function with respect to the parameters coming from the relevant representations. This rational function
is explicitly calculated in Subsections 4.4 and 4.5 and identified with the expected local $L$-functions. Hence
we carry out the complete proof of Theorem~\ref{urmL}.

Throughout this section, denote by $\nu$ the local place of $F$. If $\nu$ is inert, then $E_{\nu}$ is the unramified quadratic extension of $F_{\nu}$.
If $\nu$ splits over $E$, then $E_{\nu}\cong F_{\nu}\times F_{\nu}$. For the simplicity of notation,
most of the time, we will omit the subscript $\nu$ from the corresponding notation. For instance, we may use $F$ for the local field $F_{\nu}$ and
use $\pi$ for $\pi_{\nu}$ and so on, when there are no confusions.

Let $\Fo$ be the ring of integers of $F$, and fix a prime element $\varpi$ of $\Fo$.
Let $q_{F}$ and $q_{E}$ be the cardinality of the residue fields of $F$ and $E$, respectively. If $\nu$ is inert,  one has that $q_{E}=q_{F}^{2}$,
and if $\nu$ is split, one has that $q_E=q_F$.
We fix the normalized absolute values $|x|_{F}=|x|_{\nu}$ for $x\in F$, and $|x|_{E}=|x\bar{x}|_{F}$ for $x\in E$ if $\nu$ is inert.

When $\nu$ splits in $E$, we need to write down more explicit structures of the unitary group $G_n(F)$, which are needed for the unramified
calculation of the local integrals. In this case, one may take that $\rho=d^{2}$ or $\sqrt{\rho}=d$ for some $d\in F^\times$, and hence has that
$E\cong F\times F$. This isomorphism is explicitly given by the following mapping: for $x,y\in F$,
$$
x\otimes 1+y\otimes\sqrt{\rho}\mapsto (x+yd,x-yd).
$$
When $x\in E$ is taken to $(x_{1},x_{2})\in F\times F$, the corresponding absolute values  are normalized so that $|x|_{E}=|x_{1}x_{2}|_{F}$.
It follows that $\GL_m(E)\cong\GL_m(F)\times\GL_m(F)$ given by
$$
g_{1}\otimes 1+g_{2}\otimes \sqrt{\rho}\mapsto (g_{1}+dg_{2},g_{1}-dg_{2}).
$$
Then the unitary group $G_n(F)$ consists of all elements $g=g_{1}\otimes 1+g_{2}\otimes \sqrt{\rho} \in \GL_m(E)$ satisfying
$$
(g_{1}+dg_{2})J_{m} {}^{t}(g_{1}-dg_{2})=J_{m}.
$$
The restriction of the above isomorphism to $G_n(F)$ gives the isomorphism: $G_{n}(F)\cong \GL_{m}(F)$, given explicitly by
\begin{equation}\label{eq:iso-split}
g_{1}\otimes 1+g_{2}\otimes \sqrt{\rho}\mapsto (g_{1}+dg_{2},g_{1}-dg_{2})\mapsto g_{1}+dg_{2}.
\end{equation}

Next, we explain the data in the unramified local integral as needed for Theorem \ref{urmL}.
We take a normalized parabolically induced representation
$$
\Pi(\tau,\sig,s)=\Ind^{G_n(F)}_{P_{j}(F)}(|\det|_E^{s}\tau\otimes\sig),
$$
where $\tau$ and $\sig$ are irreducible admissible representations of $\GL_{j}(E)$ and $G_{n-j}(F)$, respectively.
Let $\pi$ be an irreducible admissible representation of $H_{n-\ell}(F)$. Recall that the unitary group $H_{n-\ell}$ is defined
in \eqref{Lellw0}.

When $\nu$ splits in $E$, the induced representation $\Pi(\tau,\sig,s)$ can be made more specific. In this case,
the representation $\tau$ can be expressed as $\tau_{1}\otimes\tau_{2}$, where $\tau_{i}$ are irreducible representations of $\GL_{j}(F)$. The representation $\sig$ is an irreducible representation of $\GL_{m-2j}(F)$. The representation $\Pi(\tau,\sig,s)$
 can be realized as the representation of $\GL_{m}(F)$, induced from the standard parabolic subgroup $P_{j,m-2j,j}(F)$ with the
following representation
$$
\begin{pmatrix}
g_{1}&x&y\\
&h&z\\
&&g_{2}
\end{pmatrix}\mapsto
\left\vert \frac{\det(g_{1})}{\det(g_{2})}\right\vert^{s}\tau_{1}(g_{1})\otimes\sig(h)\otimes\tau_{2}(g^{*}_{2}),
$$
where $g_{1}, g_{2}\in \GL_{j}(F)$ and $g^{*}_{2}=J_{j}{}^{t}g^{-1}J^{-1}_{j}$.

\subsection{Local zeta integrals and twisted Jacquet modules}
We introduce a local zeta integral in general at any local place, although only a special case will contribute to the proof of Theorem \ref{urmL}.

Let $W_{j}$ be a Whittaker model attached to a nonzero member in the space
$$
\Hom_{\GL_{j}(F)}(\tau,\Ind^{\GL_{j}(E)}_{Z_{j}(E)}(\psi_{Z_{j},\kappa})).
$$
This produces a partial Whittaker function
$$
W_{j}(f)\in \Ind ^{G_{n}(F)}_{Z_{j}(E)\times G_{n-j}(F)\ltimes U_{j}(F)}(\psi_{Z_{j},\kappa}\otimes\sig)
$$
for $f\in\Pi(\tau,\sig,s)$. As suggested by the global calculation in Section~\ref{sec:eulerian}, we can formally define
(over the open cell) the following function
$$
\CJ(f)(g):=\int_{N^{\eta}_{\ell}(F)\bks U_{\ell}(F)}W_{j}(f)(\eps_{0,j-\ell}\eta u g)\psi^{-1}_{(\ell,\kappa)}(u)\ud u.
$$
Following the exact argument in Appendix 2 to \S 5 \cite{GPSR97}, the integral defining $\CJ(f)$ is convergent for $\Re(s)$ sufficient large
and is analytic in $s$. In addition, $\CJ(f)$ is a function on $H_{n-\ell}(F)$ belonging to the space
$$
\Ind^{H_{n-\ell}(F)}_{R^{\eta}_{\ell,\beta-1}(F)}(\psi^{-1}_{\beta-1,y_{-\kappa}}\otimes\sig^{w^{\ell}_{q}}),
$$
where $\sig^{w^{\ell}_{q}}$ is a representation of $G_{n-j}(F)$ conjugated by $w^{\ell}_{q}$.

Let $\CB_{\beta-1}$ be a Bessel model attached to a non-trivial member in the Hom-space
$$
\Hom_{H_{n-\ell}(F)}(\pi,\Ind^{H_{n-\ell}(F)}_{R^{\eta}_{\ell,\beta-1}(F)}(\psi_{\beta-1,y_{-\kappa}}\otimes \tilde{\sig}^{w^{\ell}_{q}})),
$$
where $\tilde{\sig}$ is the dual of $\sig$.
Let $\apair{\cdot,\cdot}_{\sig}$ be an invariant pairing of $\sig$ and $\tilde{\sig}$. By the uniqueness of the local Bessel models
(\cite{AGRS10}, \cite{GGP12}, \cite{SZ12} and \cite{JSZ11}),  $\CB_{\beta-1}$ is unique up to a constant.
We may define a pairing
$$
\apair{\CJ(f),\CB_{\beta-1}(v)}=\int_{R^{\eta}_{\ell,\beta-1},y_{-\kappa}(F)\bks H_{n-\ell}(F)}\apair{\CJ(f)(h),\CB_{\beta-1}(v)(h)}_{\sig}\ud h.
$$
\begin{lem}
The pairing $\apair{\CJ(f),\CB_{\beta-1}(v)}$  is absolutely convergent for suitable data $\tau$ and $\sig$, and $\Re(s)$ sufficiently large.
\end{lem}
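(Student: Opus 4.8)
The plan is to prove convergence by an Iwasawa decomposition argument on $H_{n-\ell}(F)$, reducing the integral to a finite sum of one-parameter integrals along a split torus and then dominating the integrand by a product of two gauges: one coming from the Whittaker function of $\tau$ built into $\CJ(f)$, which decays rapidly, and one coming from the Bessel model $\CB_{\beta-1}(v)$ of $\pi$, which is of uniform moderate growth. Concretely, one writes $h=r\,t\,k$ with $r\in R^{\eta}_{\ell,\beta-1}(F)$, with $t$ in the maximal split torus of the $\GL$ factor of $R^{\eta}_{\ell,\beta-1}$, and with $k$ in a fixed maximal compact subgroup $K$; since $\CJ(f)$ and $\CB_{\beta-1}(v)$ lie in induced spaces from $R^{\eta}_{\ell,\beta-1}(F)$ whose inducing data pair to a character on the $\psi_{\beta-1,y_{-\kappa}}$ components and to $\apair{\cdot,\cdot}_{\sig}$ on the $\sig,\tilde{\sig}$ factors, the modular characters cancel and the integrand descends to a function of $(t,k)$.

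First I would recall, following Appendix~2 to \S5 of \cite{GPSR97} (already invoked above for the convergence and analyticity of $\CJ(f)$), that the inner unfolding integral over $N^{\eta}_{\ell}(F)\bs U_{\ell}(F)$ introduces no growth worse than $|\det t|^{\Re(s)+C}$ and that the partial Whittaker function $W_{j}(f)$ of $\tau$ contributes a gauge $\xi_{1}(t)$ decaying rapidly towards the walls cut out by the generic character $\psi_{Z_{j},\kappa}$, by the Whittaker function estimates of Jacquet, Piatetski-Shapiro and Shalika on $\GL_{j}(E)$ (\cite{JPSS83}). Second, since $\pi$ is irreducible admissible, the twisted Jacquet module description of the Bessel model recalled from \cite[Chapter 5]{GRS11} shows that $h\mapsto\CB_{\beta-1}(v)(h)$ is of uniform moderate growth; along the same coordinates it is bounded by $\xi_{2}(t)$ times a polynomial, where $\xi_{2}$ is a finite sum of characters of $t$ governed by the exponents of $\pi$ and of $\tilde{\sig}$. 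Multiplying the two bounds, $\apair{\CJ(f),\CB_{\beta-1}(v)}$ is majorized by
$$
\Bigl(\int_{K}|\apair{W^{\sig}(k),\widetilde{W}^{\sig}(k)}_{\sig}|\,\ud k\Bigr)\cdot\int_{T}|\det t|^{\Re(s)+C}\,\xi_{1}(t)\,\xi_{2}(t)\,\ud t,
$$
the first factor being finite since $K$ is compact, and the second reducing, in suitable coordinates, to a finite product of geometric series in $q_{F}^{-\Re(s)}$ times powers of the remaining torus parameters, which converges once $\Re(s)$ exceeds a bound depending on the exponents of $\pi$, $\sig$ and $\tau$. This is the content of the phrase ``suitable data $\tau$ and $\sig$, and $\Re(s)$ sufficiently large'': it suffices, for instance, to take $\sig$ unitary and $\Re(s)$ beyond that explicit threshold.

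The main obstacle is the first step, namely obtaining the rapidly decaying gauge for $\CJ(f)$ uniformly in $h$. Because $\CJ(f)$ is itself an integral of $W_{j}(f)$ over the unipotent quotient $N^{\eta}_{\ell}(F)\bs U_{\ell}(F)$, one must control both the convergence of that inner integral and the size of its cross terms, corresponding to the off-diagonal blocks in the matrix form \eqref{eq:stable-j}, in order to be certain that passing from $U_{\ell}$ to the torus $T$ produces only the harmless factor $|\det t|^{\Re(s)+C}$ while preserving the rapid decay inherited from the Whittaker character; carrying this out is precisely the adaptation to the present groups of Appendix~2 to \S5 of \cite{GPSR97}. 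Once this estimate is in hand, the torus and compact integrals are routine and the lemma follows.
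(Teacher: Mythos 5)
Your overall strategy---majorize $\CJ(f)$ by a rapidly decaying, $|\det t|^{\Re(s)}$-twisted gauge coming from the Whittaker model of $\tau$, majorize the Bessel functional $\CB_{\beta-1}(v)$ by a moderate-growth gauge, and integrate over torus coordinates---is exactly the spirit of the argument the paper invokes, since its entire proof is the one-line reference to Theorem A of Appendix (I) to \S 5 of \cite{GPSR97}; in that sense you are on the intended route, and like the paper you ultimately defer the hard estimate to an adaptation of that appendix.

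However, there is a concrete gap in your reduction step. You parametrize the domain by $h=rtk$ with $r\in R^{\eta}_{\ell,\beta-1}(F)$, $t$ in the maximal split torus of the $\GL_{\beta-1}$-part attached to $R^{\eta}_{\ell,\beta-1}$, and $k\in K$. This does not cover $R^{\eta}_{\ell,\beta-1}(F)\bks H_{n-\ell}(F)$: besides the $\GL_{\beta-1}$-directions, the quotient contains the non-compact spherical piece $G_{n-j}(F)\bks H_{n-j+1}(F)$ (the orbit of an anisotropic vector, a $p$-adic ``hyperboloid''), so one needs a decomposition of the form $H_{n-\ell}=R^{\eta}_{\ell,\beta-1}\,A\,K$ with $A$ strictly larger than the $\GL_{\beta-1}$-torus. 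The failure is most visible in the case the paper actually uses, $j=\ell+1$, where $\beta-1=0$: there $R^{\eta}_{\ell,0}\cong G_{n-j}$ has no $\GL$-factor at all, your decomposition would force $H_{n-j+1}=G_{n-j}\cdot K$ and hence a compact domain, and convergence would then hold for every $s$ --- contradicting the fact that these integrals converge only for $\Re(s)$ large (their unramified values are ratios of $L$-factors). The extra torus direction is precisely where the convergence burden sits, and the decay there must be extracted from the $|\det|_E^{s}$-twisted section together with the asymptotics of $W_j(f)$ along that direction; this is the actual content of the GPSR97 appendix you cite, not a routine afterthought. (Secondary points: uniform moderate growth of the local Bessel functional does not follow formally from the Jacquet-module statements of \cite{GRS11} and needs its own justification, and the local Whittaker gauge estimates are not those of \cite{JPSS83}, which is the global Rankin--Selberg paper.)
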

\begin{proof}
The proof is similar to Theorem A of Appendix (I) to \S 5 in \cite{GPSR97}.
\end{proof}

It is easy to check that this pairing, if exists, defines a linear functional of the Gross-Prasad type in the Hom-space
\begin{equation} \label{eq:hom-local}
\Hom_{N_{\ell}\times H^{\triangle}_{n-\ell}}(\Pi(\tau,\sig,s)\otimes\pi,\psi_{\ell,\kappa}).
\end{equation}
Again, by the uniqueness of local Bessel functionals, the dimension of this Hom-space is at most one, when $\Pi(\tau,\sig,s)$ is irreducible.
Therefore, the local zeta integral is defined by
\begin{equation}
\CZ(s,f,v,\psi_{\ell,\kappa})=\apair{\CJ(f),\CB_{\beta-1}(v)},
\end{equation}
for $f\in \Pi(\tau,\sig,s)$ and $v\in \pi$, which is proportional to the local zeta integral defined as an eulerian factor of the
the global zeta integral as in Section 3. For the unramified data, we may normalize the pairing, so that this proportional constant is one.

In order to proceed the explicit calculation of the local integrals, we have to understand those Bessel models involved in the
local zeta integrals from the representation-theoretic point of view. This means to see more precisely the structures of those
twisted Jacquet models. We recall relevant results from \cite[Chapter 5]{GRS11}.

Let $(\Pi, V_{\Pi})$ be a smooth representation of $G_{n}(F)$.
Let $J_{\psi_{\ell,\kappa}}(\Pi)$ be the twisted Jacquet module of $\Pi$ with respect to $N_{\ell}(F)$ and its character $\psi_{\ell,\kappa}$,
the space of which is defined by
\begin{equation}\label{eq:twist-Jacquet}
V_{\Pi}/\Span\cpair{\Pi(n)v-\psi_{\ell,\kappa}(n)v\mid n\in N_{\ell}(F), v\in V_{\Pi}}.
\end{equation}
Note that $J_{\psi_{\ell,\kappa}}(\Pi)$ is a smooth representation of $H_{n-\ell}(F)$. The same definition may apply to
the twisted Jacquet modules for different groups throughout the section.

Next, we study the twisted Jacquet module $J_{\psi_{\ell,\kappa}}(\Pi)$ for the induced representation $\Pi=\Pi(\tau,\sig,s)$. To do so,
we consider the structure of the restriction of the induced representation $\Pi$ to the standard parabolic subgroup $P_\ell$, which
is denoted by $\Res_{P_{\ell}}(\Pi)$. This reduces to consider the generalized Bruhat decomposition $P_j\bks G_n/P_\ell$, which was discussed
in Section 3. Hence, as a representation of $P_\ell$, $\Res_{P_{\ell}}(\Pi)$ can be expressed (up to semi-simplification) as a finite direct sum
$\Pi_{\eps_{\alpha,\beta}}$ parameterized by the set of representatives $\{\eps_{\alpha,\beta}\}$ as discussed in Section 3.1.

Let $\tau^{(t)}$ denote the $t$-th Bernstein-Zelevinski derivative of $\tau$ along the subgroup $Z'_{t}$ defined in \eqref{eq:Z'} with the character
$$
\psi'_{t}\begin{pmatrix} I_{\beta}&y\\0&z\end{pmatrix}=\psi^{-1}(z_{1,2}+z_{2,3}+\cdots+z_{t-1,t}).
$$
We embed $\GL_\beta$ into $\GL_j$ through the map $g\in \GL_{\beta}\mapsto \diag(g,I_{t})\in \GL_{j}$. The image, which is still denoted by
$\GL_{\beta}$, normalizes the character $\psi'_{t}$. Hence $\tau^{(t)}$ is the representation of $\GL_{\beta}$ via
the twisted Jacquet module $J_{\psi'_{t}}(\tau)$.
We also define the following character of $Z'_{t}$,
$$
\psi''_{t}\begin{pmatrix} I_{\beta}&y\\0&z\end{pmatrix}=\psi^{-1}(z_{1,2}+z_{2,3}+\cdots+z_{t-1,t}+y_{\beta,1}),
$$
which is conjugate to the character $\psi_{Z'_{\ell},\kappa}$ as defined in \eqref{chz'} for any nonzero $\kappa$, by an element in the subgroup $\GL_{\beta}$.
Denote the corresponding Jacquet module $J_{\psi''_{t}}(\tau)$ by $\tau_{(t)}$, which is a representation of the mirabolic subgroup  of $\GL_{\beta}$.

Recall that $P'_{\beta}=H^{\eta_{\epsilon,I_{m-2\ell}}}_{n-\ell}$ is as defined in \eqref{hgamma}. By the discussion in Page~\pageref{page:hgamma}, when $\ell+\beta<\tilde{m}$, $P'_{\beta}$ is a maximal parabolic subgroup of $H_{n-\ell}$. For the proof of Theorem \ref{urmL}, which
only concerns the case of $j=\ell+1$, we may assume that $\ell<j$ in the following discussion.
Put $P''_{j-\ell}=H^{\eta_{\epsilon,\gamma}}_{n-\ell}$ for $\gamma$ as defined in \eqref{m:gamma}.
Note that $P'_{w}\gamma H_{n-\ell}$ is  the open  double coset discussed in Page~\pageref{m:gamma}, and
$P''_{j-\ell}$ is not a proper maximal parabolic subgroup. Although we only need in this paper the case when $\ell<j$, we recall from
\cite{GRS11} the following general result.

\begin{prop}[{\cite[Theorem 5.1]{GRS11}}]\label{pro:Jac-mod}
Assume that $0\leq \ell<\tilde{m}$ and $1\leq j<m$. If $\nu$ is inert, then, up to semi-simplification, the following isomorphism holds
$$
J_{\psi_{\ell,\kappa}}(\Ind^{G_n}_{P_j}(\tau \otimes \sig))
\equiv
\Upsilon_1\oplus\Upsilon_2\oplus\Upsilon_3
$$
where
$$
\Upsilon_1
=
\oplus_{\substack{j-\ell \leq \beta< \tilde{m}-\ell \\ 0\leq \beta\leq j}}
\ind^{H_{n-\ell}}_{P'_{\beta}} (|\det| _{E}^{\frac{1-t}{2}} \tau^{(t)} \otimes J_{\psi'_{\ell-t,\kappa}}(\sigma^{w^t_q})),
$$
$$
\Upsilon_2=\begin{cases}
\ind^{H_{n-\ell}}_{P''_{j-\ell}} (|\det|^{-\frac{\ell}{2}}_E \tau_{(\ell)} \otimes \sigma^{w^{\ell}_q}), & \ell<j, \\
0, & \ell\not<j,
\end{cases}
$$
and
$\Upsilon_3$ is the representation of $H_{n-\ell}$ supported on the other double cosets.
\end{prop}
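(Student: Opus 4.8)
The plan is to compute this twisted Jacquet module by the geometric lemma of Bernstein--Zelevinski, reusing the Bruhat decomposition $P_j\bs G_n/P_\ell$ already tabulated in Section~\ref{sec:eulerian} and then re-running, at the level of representations rather than of Bessel periods, the unfolding that proved Proposition~\ref{bfcc}. First I would restrict $\Pi=\Ind^{G_n}_{P_j}(\tau\otimes\sig)$ to $P_\ell$: by the geometric lemma, $\Res_{P_\ell}(\Pi)$ has, up to semisimplification, a filtration whose subquotients are indexed by the representatives $\eps_{\alpha,\beta}$ of $P_j\bs G_n/P_\ell$, the $\eps_{\alpha,\beta}$-subquotient being $\ind^{P_\ell}_{P^{\eps_{\alpha,\beta}}_\ell}$ of the representation obtained from $|\det|_E^s\tau\otimes\sig$ by transport through $\eps_{\alpha,\beta}$ (carrying the relevant half-powers of modulus characters) to the stabilizer $P^{\eps_{\alpha,\beta}}_\ell$ of \eqref{eq:stable-l}--\eqref{eq:stable-j}. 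The twisted Jacquet functor $J_{\psi_{\ell,\kappa}}$ is exact, so it commutes with semisimplification and distributes over this filtration; and since $R_{\ell,w_0}=H_{n-\ell}N_\ell$, applying $J_{\psi_{\ell,\kappa}}$ to each $\ind^{P_\ell}_{P^{\eps_{\alpha,\beta}}_\ell}(\cdot)$ refines it further along $P^{\eps_{\alpha,\beta}}_\ell\bs P_\ell/R_{\ell,w_0}$, with the representatives $\eta_{\eps,\gamma}$ of \eqref{eta}, the result being a smooth $H_{n-\ell}$-module.

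Next I would eliminate most cells. The representation-theoretic analogues of Lemmas~\ref{lm:r>0} and \ref{lm:open}, namely Propositions~5.1 and 5.2 of \cite[Chapter~5]{GRS11} (proved by the same unipotent-invariance arguments we used globally), show that (i) every cell with $\alpha>0$ has vanishing $\psi_{\ell,\kappa}$-twisted Jacquet module, and (ii) among the cells with $\alpha=0$ and $\beta>\max\{j-\ell,0\}$, any one for which $\gamma w_0$ is not orthogonal to $V^{-}_{\ell,\beta}$ contributes nothing. What survives is: the cells with $\alpha=0$, $\beta<\tilde{m}-\ell$ and $\gamma w_0\perp V^{-}_{\ell,\beta}$, for which $H^{\eta}_{n-\ell}$ is the maximal parabolic $P'_\beta$ of $H_{n-\ell}$ --- these assemble into $\Upsilon_1$; the open cell $\beta=\max\{j-\ell,0\}$ with the distinguished $\gamma$, which for $\ell<j$ is the $\gamma$ of \eqref{m:gamma} whose stabilizer $H^{\eta}_{n-\ell}=P''_{j-\ell}$ is \emph{not} a proper maximal parabolic --- this gives $\Upsilon_2$, and it is absent when $\ell\not<j$; and the residual cells with $\beta=\tilde{m}-\ell$, together with the extra double cosets of Case~(2-2) in the $F$-split even orthogonal situation, which I would collect, without further analysis, into $\Upsilon_3$. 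Note that the cuspidality Lemma~\ref{lm:close} plays no role here, so the $\beta>0$ cells with $\gamma w_0\perp V^{-}_{\ell,\beta}$ are genuinely present in the Jacquet module.

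Finally I would identify $\Upsilon_1$ and $\Upsilon_2$ explicitly. On a surviving cell with $\alpha=0$ and $t:=j-\beta$, conjugating $\eps_{0,\beta}\eta_{\eps,\gamma}$ across $N_\ell$ and factoring the integration exactly as in \eqref{eq:P-j-l}--\eqref{fcz'} splits $J_{\psi_{\ell,\kappa}}$ into three operations on the inducing data: along the $\GL_j$-block of $\tau$ one integrates against the character $\psi'_t$, respectively $\psi''_t$, which by Bernstein--Zelevinski produces the derivative $\tau^{(t)}$ on $\GL_\beta$, respectively the mirabolic module $\tau_{(t)}$; along the $G_{n-j}$-block one is left with a residual Bessel--Jacquet integration $J_{\psi'_{\ell-t,\kappa}}(\sig^{w_q^t})$, respectively $\sig^{w_q^\ell}$ (here $\ell-t=0$, so no integration remains); and the surviving induction is from $H^{\eta}_{n-\ell}$, which is $P'_\beta$, respectively $P''_{j-\ell}$, by the structure computation \eqref{hgamma}--\eqref{heta}. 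Collecting the half-powers of the modulus characters of $P_j$, $P_\ell$ and of the stabilizers produces the normalizing twists $|\det|_E^{(1-t)/2}$ and $|\det|_E^{-\ell/2}$, giving $\Upsilon_1$ and $\Upsilon_2$ as stated, with everything else in $\Upsilon_3$.

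The hard part will be this last normalization bookkeeping: carrying the several modulus characters through the geometric lemma and the inner unfolding so that the exponents come out to be exactly $\frac{1-t}{2}$ and $-\frac{\ell}{2}$, and confirming that semisimplification genuinely decouples the filtration, so that the answer is the honest direct sum $\Upsilon_1\oplus\Upsilon_2\oplus\Upsilon_3$ and not merely a module with such a composition series. A secondary point is to check that the vanishing statements of \cite[Chapter~5]{GRS11} transfer verbatim to the twisted-Jacquet (rather than Bessel-period) setting --- which is exactly the place where that chapter enters essentially.
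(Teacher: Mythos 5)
The paper gives no proof of this proposition: it is imported verbatim as \cite[Theorem 5.1]{GRS11}, with the only accompanying remark being that the details of $\Upsilon_3$ are not needed, so the relevant comparison is with the proof in that reference and with the paper's own global unfolding in Section~\ref{sec:eulerian}, of which this is the local analogue. Your outline --- geometric-lemma filtration over $P_j\backslash G_n/P_\ell$ refined along $P^{\eps_{\alpha,\beta}}_\ell\backslash P_\ell/R_{\ell,w_0}$, elimination of the $\alpha>0$ cells and of the non-orthogonal cells with $\beta>\max\{j-\ell,0\}$ via \cite[Propositions 5.1, 5.2]{GRS11}, the correct observation that cuspidality (Lemma~\ref{lm:close}) plays no role so the maximal-parabolic cells survive as $\Upsilon_1$, and identification of the open cell as $\Upsilon_2$ via Bernstein--Zelevinski derivatives --- is essentially the argument of the cited source, with only the modulus-character bookkeeping you flag left to verify against \cite[Chapter 5]{GRS11}.
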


We note that the detail of $\Upsilon_3$ is not needed in the explicit unramified calculation and is referred to
\cite[Theorem 5.1]{GRS11}.

If $\nu$ is split,  let $\udl{\ell}=[\ell_{1},\ell_{2},\ell_{3}]$ be a partition of a positive integer $N$ and
consider the twisted Jacquet module $J_{\tilde{\psi}}(\Ind^{\GL_{N}}_{P_{j},N-j}\tau_{1}\times\tau_{2})$
in ~\cite[Section 3.6]{GRS11}.
In order to simplify our calculation, up to a suitable conjugation,
we will use the Gelfand-Grave character defined in ~\cite[Section 3.6]{GRS11}.
Let $N_{\ell}$ consist of elements of form
$$
n=\begin{pmatrix}
z^{(1)}&y^{(1)}&x\\&I_{m-2\ell}&y^{(2)}\\&&z^{(2)}
\end{pmatrix}\in \GL_{m}(F),
$$
where $z^{(1)},z^{(2)}\in Z_{\ell}(F)$. We will take the character $\psi_{\ell,\kappa}$ to be the following character
$$
\tilde{\psi}(n)=\psi(\sum^{\ell-1}_{i=1}(z^{(1)}_{i,i+1}+z^{(2)}_{i,i+1})+y^{(1)}_{\ell,1}+y^{(2)}_{1,1}).
$$
The stabilizer of the character $\tilde{\psi}(n)$ inside $G_{n-\ell}(E_{\nu})\cong \GL_{m-2\ell}(F)$ is
$$
\tilde{L}_{\ell}=\cpair{\diag\cpair{I_{\ell},\gamma,I_{\ell}}\in \GL_{m}(F)\mid \gamma=\begin{pmatrix}1&\\&g \end{pmatrix},~g\in \GL_{m-2\ell-1}(F)}.
$$

Define
$$
\tau^{[\ell_{1}-\alpha]}_{2}:= [(\tau_{2}^{*})^{\ell_{1}-\alpha}]^{*}
\text{ and }
(\tau_{2})_{[\ell_{1}]}:=[(\tau^{*}_{2})_{(\ell_{1})}]^{*}.
$$
which are representations of $\GL_{\ell_{2}-\beta+\alpha}(F)$ and the mirabolic subgroup of $\GL_{N-j-\ell_{1}}(F)$, respectively,
where the inner $*$ denotes composition with the map
$$
g\to J'_{\ell_{1}-\alpha} {}^{t}g^{-1}{J'}_{\ell_{1}-\alpha}^{-1},
$$
where  $J'_{\ell_{1}-\alpha}=\diag(J_{\ell_{1}-\alpha},J_{\ell_{2}-\beta+\alpha})$, and the outer $*$ denotes composition with the map
$$
g\to J_{\ell_{2}-\beta+\alpha}\cdot {}^{t}g^{-1}J^{-1}_{\ell_{2}-\beta+\alpha}.
$$
More information about $\tau^{[\ell_{1}-\alpha]}_{2}$ and $(\tau_{2})_{[\ell_{1}]}$ can be found in \cite[Pages 113 and 115]{GRS11}.
%

\begin{prop}[{\cite[Theorem 5.7]{GRS11}}] \label{pro:Jac-mod-split}
Up to semi-simplification, the following isomorphism holds
$$
J_{\tilde{\psi}}(\Ind^{\GL_{N}(F)}_{P_{j,N-j}}\tau_{1}\times\tau_{2})
\equiv
\CL_1\oplus\CL_2\oplus\CL_3\oplus\CL_4\oplus\CL_5
$$
where $\CL_1$ is given by the following direct sum
$$
\oplus_{\substack{j-\ell_{3}<\beta<\ell_{2}\\ 0\leq \beta\leq j}}
\Ind^{\GL_{\ell_{2}-1}}_{P_{\beta,\ell_{2}-\beta-1}}(|\cdot|^{\frac{1-(j-\beta)+\ell_{3}-\ell_{1}}{2}}\tau^{(j-\beta)}_{1})\otimes |\cdot|^{\frac{j-\beta}{2}}J_{\psi_{(\ell_{1},\ell_{2}-\beta,\ell_{3}-j+\beta)}}(\tau_{2});
$$
$\CL_2$ is given by the following direct sum
$$
\oplus_{\substack{0<r<j-\ell_{3}\\j-\ell_{2}-\ell_{3}\leq r\leq \ell_{1}}}
\Ind^{\GL_{\ell_{2}-1}}_{P_{j_{\ell_{3}-r-1},\ell_{2}+\ell_{3}-j+r}}
(|\cdot|^{-\frac{\ell_{1}-r}{2}}J_{\psi_{(r,j-\ell_{3}-r,\ell_{3})}}(\tau_{1}))
\otimes |\cdot|^{\frac{\ell_{3}-r-1}{2}}\tau_{2}^{[\ell_{1}-r]};
$$
$\CL_3$ is given by the following representation
$$
\begin{cases}
\Ind^{\GL_{\ell_{2}-1}}_{P_{j-\ell_{3}-1,\ell_{2}+\ell_{3}-j}}(|\cdot|^{-\frac{\ell_{1}}{2}}(\tau_{1})_{(\ell_{3})})
\otimes|\cdot|^{\frac{\ell_{3}-1}{2}}\tau_{2}^{[\ell_{1}]}, & \text{ if } 0<j-\ell_{3}\leq \ell_{2},\\
\tau^{(j)}_{1}\otimes |\det|^{-\frac{\ell_{3}}{2}}\tau_{2[\ell_{1}]}, & \text{ if } \ell_{3}=j,\\
0, &\text{ otherwise};
\end{cases}
$$
$\CL_4$ is given by the following representation
$$
\begin{cases}
\Ind^{\GL_{\ell_{2}-1}}_{P_{j-\ell_{3},\ell_{2}+\ell_{3}-j-1}}(|\cdot|^{\frac{1-\ell_{1}}{2}}(\tau_{1})^{(\ell_{3})})
\otimes|\cdot|^{\frac{\ell_{3}}{2}}\tau_{2[\ell_{1}]}, & \text{ if } 0<j-\ell_{3}<\ell_{2},\\
0, &\text{ otherwise};
\end{cases}
$$
and $\CL_5$ is given by the following representation
$$
\begin{cases}
ind^{\GL_{\ell_{2}-1}}_{P'_{j-\ell_{3}-1,1,\ell_{2}+\ell_{3}-j-1}}(|\cdot|^{-\frac{\ell_{1}}{2}}(\tau_{1})_{(\ell_{3})})\otimes |\cdot|^{\frac{\ell_{3}}{2}}\tau_{2[\ell_{1}]}, &\text{ if } 0<j-\ell_{3}<\ell_{2}\\
0, &\text{ otherwise.}
\end{cases}
$$
\end{prop}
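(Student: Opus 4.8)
This is \cite[Theorem 5.7]{GRS11}; we indicate the method, which is the split-place counterpart of the argument underlying Proposition~\ref{pro:Jac-mod}. The plan is to analyze the twisted Jacquet module $J_{\tilde\psi}$ by means of the geometric lemma of Bernstein--Zelevinski together with the ``root exchange'' manipulations of \cite[Chapter 5]{GRS11}.

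First I would restrict $\Ind^{\GL_N(F)}_{P_{j,N-j}}(\tau_1\times\tau_2)$ to the standard parabolic $P_\ell$ of $\GL_N$ whose unipotent radical contains $N_\ell$, and use the geometric lemma to obtain, up to semisimplification, a filtration of $\Res_{P_\ell}\Ind^{\GL_N}_{P_{j,N-j}}(\tau_1\times\tau_2)$ whose subquotients are indexed by a set of representatives of $P_{j,N-j}(F)\bks\GL_N(F)/P_\ell(F)$, parameterized as in Section~\ref{sec:eulerian} by pairs of nonnegative integers $(\alpha,\beta)$. Each subquotient is an induced representation whose inducing datum is assembled from $\tau_1$, $\tau_2$, the Weyl-element combinatorics, and the relevant modular characters.

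Next, for each representative $w_{\alpha,\beta}$ I would apply the twisted Jacquet functor $J_{\tilde\psi}$ to the corresponding subquotient, splitting $N_\ell$ into the part contained in $w_{\alpha,\beta}^{-1}U_{j,N-j}w_{\alpha,\beta}$ and a complement. Any subquotient for which some root subgroup $U\subset N_\ell$ satisfies $w_{\alpha,\beta}Uw_{\alpha,\beta}^{-1}\subset U_{j,N-j}$ while $\tilde\psi|_U$ is nontrivial is annihilated, exactly as in the proofs of Lemmas~\ref{lm:r>0} and \ref{lm:open}; this kills all but finitely many cells. For the surviving cells I would collapse the remaining unipotent integrations by a sequence of root-exchange identities, after which the inner datum becomes a Bernstein--Zelevinski derivative $\tau_1^{(t)}$ or $(\tau_1)_{(t)}$, or a twisted version $\tau_2^{[\ell_1-r]}$ or $(\tau_2)_{[\ell_1]}$, with the accumulated modulus characters producing the explicit twists $|\cdot|^{(\cdots)/2}$ in the statement. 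Organizing the surviving pieces according to the position of $\beta$ --- in the open range $j-\ell_3<\beta<\ell_2$, in the ranges $0<r<j-\ell_3$, or at the boundary values $j-\ell_3$ and $j$ --- then yields the five families $\CL_1,\dots,\CL_5$.

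The main obstacle is the combinatorial bookkeeping: identifying exactly which Bruhat cells survive the twisted Jacquet functor, pinning down the precise Levi and mirabolic subgroups over which the survivors are induced, and --- most delicately --- tracking the exact powers of $|\cdot|$ coming from modular characters and from the normalizations intrinsic to Bernstein--Zelevinski derivatives. Because at a split place $\tau_1$ and $\tau_2$ must be tracked separately rather than through a single Asai-type datum, the number of surviving families rises from the two of Proposition~\ref{pro:Jac-mod} to the five listed here; the complete verification is carried out in \cite[Theorem 5.7]{GRS11}.
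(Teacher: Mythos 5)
Your proposal matches the paper's treatment: the paper gives no proof of this proposition at all, simply importing it as \cite[Theorem 5.7]{GRS11}, and your sketch of the underlying Bernstein--Zelevinski geometric-lemma and root-exchange argument is a fair description of how that cited result is established. Nothing further is required here, since the delicate bookkeeping you flag (surviving Bruhat cells, derivatives, modulus characters) is exactly what is carried out in \cite[Chapter 5]{GRS11} rather than in this paper.
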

The other notation in this proposition is referred to \cite[Section 5.2]{GRS11}.
We are going to apply the case of $\udl{\ell}=[\ell,m-2j,\ell]$ to the unramified calculation.

\subsection{Unramified representations and local $L$-functions of unitary groups}
Let $B_{H}=T_{H}N_{H}$ be a Borel subgroup of $H_{n-\ell}$ with the maximal $F$-torus $T_{H}$ and the unipotent radical $N_{H}$ .
 Let $K_{G}=G_{n}(\Fo_{F})$ (resp.~$K_{H}=H_{n-\ell}(\Fo_{F})$) be the standard maximal open compact
subgroup of $G_{n}$ (resp.~$H_{n-\ell}$). Denote by $W(G_{n})=N(T)/T$ the Weyl group of $G_{n}$.
When $\nu$ is inert over $E$, $W(G_{n})$ is the Weyl group associated to a root system of type $B$.
When $\nu$ is split over $E$, $W(G_{n})$ is the Weyl group associated to a root system of type $A$.

From now on, we assume that the representations $\tau$, $\sigma$, and $\pi$ are unramified.
Let $\chi_{\tau}$ and $\chi_{\sig}$ be the unramified characters corresponding to the spherical representations $\tau$ and $\sig$. Then
$\chi_{\tau}=\otimes^{j}_{i=1}\chi_{i}$ and $\chi_{\sig}=\otimes^{\tilde{m}}_{i=j+1}\chi_{i}$. Define
$\chi_s:=|\cdot|^{s}\chi_{\tau}\otimes\chi_{\sig}$.
Let $\Pi_s:=\Pi(\chi_s)$ and $\pi:=\pi(\mu)$ be the unramified constituents of the normalized induced representations
$$
\Ind^{G_{n}(F)}_{P_{j}(F)}(|\det|^{s}\tau\otimes\sig)\ \  \text{and}\ \ \ \Ind^{H_{n-\ell}(F)}_{B_{H}(F)}(\mu),
$$
respectively.

If $\nu$ is inert over $E$, $\chi_{i}$ and $\mu_{i}$ are unramified characters of $E^\times=F(\sqrt{\rho})^\times$.

If $\nu$ is split over $E$, $H_{n-\ell}(F)\cong \GL_{m-2\ell-1}(F)$ and $\mu_{i}$ splits into a product $\theta_{i}\vartheta_{i}$
of two unramified characters of $F^\times$. Moreover, if $m-2\ell-1$ is odd, $\mu$ splits as
$\otimes^{(m-2\ell-2)/2}_{i=1}\theta_{i}\otimes\vartheta_{i}\otimes\mu_{0}$. Here $\mu_{0}$ is also an unramified character of $F^{\times}$.
In particular, $\pi(\mu)$ is the unramified constituent of the following induced representation
$$
\Ind^{H_{n-\ell}(F)}_{B_{H}}((\otimes^{\widetilde{m}_{H}}_{i=1}\theta_{i})\otimes
(\otimes^{\widetilde{m}_{H}}_{i=1}\vartheta^{-1}_{\widetilde{m}_{H}+1-i}))
$$
if $m$ is odd, and of the following induced representation
$$
\Ind^{H_{n-\ell}(F)}_{B_{H}}((\otimes^{\widetilde{m}_{H}}_{i=1}\theta_{i})\otimes\mu_{0}\otimes
(\otimes^{\widetilde{m}_{H}}_{i=1}\vartheta^{-1}_{\widetilde{m}_{H}+1-i}))
$$
if $m$ is even, where $\tilde{m}_{H}$ is the Witt index of the hermitian vector subspace
$(W_{\ell}\cap w^{\perp}_{0}, q_{W_{\ell}\cap w^{\perp}_{0}})$, which defines $H_{n-\ell}$. Since $E\cong F\times F$, we must have
$$
\GL_{j}(E)\cong \GL_{j}(F)\times\GL_{j}(F)
$$
and $\chi_{\tau}$ splits as a product $\Xi_{\tau}\Theta_{\tau}$ of unramified characters with
\begin{eqnarray*}
\Xi_{\tau}&=&\otimes^{j}_{i=1}\Xi_{i},\\
\Theta_{\tau}&=&\otimes^{j}_{i=1}\Theta_{i}.
\end{eqnarray*}
The representation $\tau$ is the unramified constituent of the induced representation
$$
\Ind^{\GL_{j}(F)}_{B_{\GL_{j}}(F)}(|\det|^{s}\Theta_{\tau})\otimes\Ind^{\GL_{j}(F)}_{B_{\GL_{j}}(F)}(|\det|^{-s}\Xi_{\tau}^{-1}).
$$
Also, if we set $\chi_{i}=|\cdot|^{s}\Theta_{i}$ and $\chi_{m+1-i}=|\cdot|^{-s}\Xi^{-1}_{i}$ for $1\leq i\leq j$, then the
representation $\Pi(\chi_s)$ of $G_n(F)$ becomes the corresponding representation of $\GL_{m}(F)$.

In the following, we write down the Satake parameters for the unramified representations discussed above and write the relevant unramified
local $L$-functions, following the arguments in \cite{BS09} or \cite{KK11} for instance.

The Langlands dual group ${}^{L}\RU_{m}$ of $\RU_{m}$  is $\GL_{m}(\BC)\rtimes \Gamma(E/F)$,
where $\Gamma(E/F)$ is the Galois group on $E$ and the nontrivial element $\iota$ acts on $\GL_{m}(\BC)$ via $\iota(g)=J_{m}{}^{t}g^{-1}J^{-1}_{m}$.
A $2m$-dimensional complex representation $\rho_{2m}$ of the Langlands dual group ${}^{L}\RU_{m}$ is given by
$$
(g;1)\mapsto \begin{pmatrix} g&0\\0&g^{*} \end{pmatrix}
\text{ and }
(I_{m};\iota) \mapsto \begin{pmatrix} 0&I_{m}\\I_{m}&0 \end{pmatrix},
$$
for any $g\in\GL_{m}(\BC)$.
The Langlands dual group ${}^{L}\Res_{E/F}\GL_{j}$ of $\Res_{E/F}\GL_{j}$ is $(\GL_{j}(\BC)\times\GL_{j}(\BC))\rtimes \Gamma(E/F)$. The element $\iota$ acts on $\GL_{j}(\BC)\times\GL_{j}(\BC)$ by $\iota(g_{1},g_{2})=(g_{2},g_{1})$.
Considering a $j^{2}$ dimensional representation of ${}^{L}\Res_{E/F}\GL_{j}$, which is realized in the space of all $j\times j$ matrices,
$M_{j\times j}$, by
\begin{eqnarray*}
(g_{1},g_{2};1)(x)&\mapsto& g_{1}\cdot x\cdot{^{t}g_{2}},\\
(I_{j},I_{j};\iota)(x)&\mapsto& {}^{t}x,
\end{eqnarray*}
and is called the $Asai$ representation of ${}^{L}\Res_{E/F}\GL_{j}$.

In addition, the Langlands dual group ${}^{L}(\RU_{m}\times\Res_{E/F}\GL_{j})$ is $$
(\GL_{m}(\BC)\times\GL_{j}(\BC)\times\GL_{j}(\BC))\rtimes\Gamma(E/F).
$$
The element $\iota$ acts on it by $\iota(g,g_{1},g_{2})=(g^{*},g_{2},g_{1})$.
A $2mj$-dimensional complex representation $\rho_{2mj}$ of ${}^{L}(\RU_{m}\times\Res_{E/F}\GL_{j})$ is given by
\begin{eqnarray*}
(g,g_{1},g_{2},1)&\mapsto& \begin{pmatrix}
g\otimes g_{1}&0\\0& g^{*}\otimes g_{2}
\end{pmatrix},\\
(I_{m},I_{j},I_{j},\iota)&\mapsto& \begin{pmatrix}
0&I_{mj}\\ I_{mj}&0
\end{pmatrix},
\end{eqnarray*}
where $g\otimes g_{i}$ is the Kronecker product.

We first consider the irreducible unramified representation $\pi(\mu)$ of $H_{n-\ell}(F)$.
When $\nu$ is inert over $E$, the Satake parameter of $\pi(\mu)$ is the semi-simple conjugacy class in ${}^{L}H_{n-\ell}$ of type
$$
c(\pi(\mu))=(\diag(\mu_{1}(\varpi_E),\mu_{2}(\varpi_E),\dots,\mu_{\widetilde{m}_{H}}(\varpi_E),1,\dots,1);\iota),
$$
where $\varpi_E$ is the $\nu$-uniformizer of $E$. To simplify the notation, we may use $\mu_i$ for $\mu_i(\varpi_E)$ in the following,
if it does not cause any confusion.

When $\nu$ is split over $E$, the Satake parameter of $\pi(\mu)$ is the semi-simple conjugacy class in ${}^{L}H_{n-\ell}$ of type
$$
c(\pi(\mu))=
(\diag(\theta_{1}(\varpi),\cdots,\theta_{\widetilde{m}_{H}}(\varpi),\vartheta_{1}^{-1}(\varpi),\dots\vartheta_{\widetilde{m}_{H}}^{-1}(\varpi));1)
$$
if $m$ is odd, and of type
$$
c(\pi(\mu))=
(\diag(\theta_{1}(\varpi),\cdots,\theta_{\widetilde{m}_{H}}(\varpi),\mu_0(\varpi),
\vartheta_{1}^{-1}(\varpi),\dots\vartheta_{\widetilde{m}_{H}}^{-1}(\varpi));1)
$$
if $m$ is even, where $\varpi$ is the $\nu$-uniformizer of $F$.

Next, we consider the irreducible unramified representation $\tau(\chi_\tau)$ of $\Res_{E/F}(\GL_{j})(F)$,
where $\chi_\tau=\otimes^{j}_{i=1}\chi_{i}$.
When $\nu$ is inert over $E$, the Satake parameter of $\tau(\chi_\tau)$ is the semi-simple conjugacy class in
${}^{L}\Res_{E/F}(\GL_{j})$ of type
$$
c(\tau(\chi_\tau))=(\diag(\chi_{1}(\varpi_E),\chi_{2}(\varpi_E),\dots,\chi_{j}(\varpi_E)), I_{j};\iota).
$$
Again, we use $\chi_i$ for $\chi_{i}(\varpi_E)$ if it does not cause any confusion.

When $\nu$ is split over $E$, the Satake parameter of $\tau(\chi_\tau)$ is the semi-simple conjugacy class in ${}^{L}\Res_{E/F}(\GL_{j})$ of type
$$
c(\tau(\chi_\tau))=(\diag(\Theta_{1},\dots,\Theta_{j}),\diag(\Xi_{1},\dots,\Xi_{j});1),
$$
where $\Theta_{i}$ is used for $\Theta_{i}(\varpi)$ and $\Xi_{i}$ is used for $\Xi_{i,\nu}(\varpi)$, to simplify the notation.

Therefore, if $\nu$ is inert over $E$ and $E$ is the unramified quadratic field extension of $F$, the unramified tensor product local $L$-function
$L(s,\pi\times\tau)$ is defined to be
\begin{equation}\label{eq:Lme}
\prod_{\substack{1\leq i\leq j\\ 1\leq i'\leq \tilde{m}_{H}}}(1-\chi_{i}\mu_{i'}q_{F}^{-2s})^{-1}(1-\chi_{i}\mu_{i'}^{-1}q_{F}^{-2s})^{-1}\prod_{1\leq k\leq n}(1-\chi_{k}q_{F}^{-2s})^{-1},
\end{equation}
if $m$ is even; and to be
\begin{equation}\label{eq:Lmo}
\prod_{\substack{1\leq i\leq j\\ 1\leq i'\leq \tilde{m}_{H}}}(1-\chi_{i}\mu_{i'}q_{F}^{-2s})^{-1}(1-\chi_{i}\mu_{i'}^{-1}q_{F}^{-2s})^{-1},
\end{equation}
if $m$ is odd. When $\nu$ is split over $E$, the unramified tensor product local $L$-function
$L(s,\pi\times\tau)$ is defined to be
\begin{equation}
L(s,\pi\times\tau)=L(s,\pi\times\tau_{1})L(s,\tilde{\pi}\times\tau_{2}),
\end{equation}
where $\tau_1$ and $\tau_2$ are defined according to $\Theta_1,\cdots,\Theta_j$ and $\Xi_1,\cdots,\Xi_j$, respectively.

Moreover, the unramified local $Asai$ $L$-function of $\tau$ is defined as, when $\nu$ is inert,
\begin{equation}
L(s,\tau,Asai)=\prod_{1\leq i_{1}<i_{2}\leq j}(1-\mu_{i_{1}}\mu_{i_{2}}q_{F}^{-2s})^{-1}\prod_{1\leq i\leq j}(1-\mu_{i}q_{F}^{-s})^{-1};
\end{equation}
and when $\nu$ is split
\begin{equation}
L(s,\tau,Asai)=L(s,\tau_{1}\times\tau_{2})=\prod_{1\leq i,k\leq j}(1-\Theta_{i}\Xi_{k}q_F^{-s})^{-1}.
\end{equation}

In the same way we define the unramified tensor product local $L$-function $L(s,\sigma\times\tau)$.

\subsection{Unramified local zeta integrals}
Let $f_{\chi_s}$ and $f_{\mu}$ be the spherical functions in $\Pi(\chi_s)$ and $\pi(\mu)$, normalized by
$f_{\chi}(e_{{G}})=f_{\mu}(e_{{H}})=1$. Denote by $f_{\tau}$ and $f_{\sig}$ the unramified function in $\tau$ and $\sig$ accordingly.
We are going to calculate explicitly the unramified local zeta integral $\CZ_{\nu}(s,f_{\chi_s},f_{\mu},\psi_{\ell,\kappa})$.

By Bernstein rationality theorem (\cite{GPSR87} and see also \cite{Bn98}), $\CZ_{\nu}(s,f_{\chi_s},f_{\mu},\psi_{\ell,\kappa})$ is a rational function of the parameters $\chi_s$ and $\mu$.
 Thus, we can assume that
\begin{equation}\label{eq:Zeta}
\CZ_{\nu}(s,f_{\chi_s},f_{\mu},\psi_{\ell,\kappa})=\frac{P(\chi_s,\mu)}{Q(\chi_s,\mu)},
\end{equation}
where $P(\chi_s,\mu)$ and $Q(\chi_s,\mu)$ are polynomials of variables in $\chi_{i}$, $\mu_{i}$ and $q_E^{-s}$.
We are going to calculate the polynomials $P(\chi_s,\mu)$ and $Q(\chi_s,\mu)$ explicitly in the following two subsections.

\subsection{Calculation of $Q(\chi_s,\mu)$}
For a technical reason, which will be mentioned in the argument below, we assume that $j=\ell+1$. This is enough to produce
the unramified local $L$-functions as needed. The method used here is an extension of that in \cite{GPSR97} to the unitary group case.
The idea to calculate $Q(\chi_s,\mu)$ is to find a proper Hecke algebra element $\Phi_0$ in the extended spherical Hecke algebra of
$H_{n-\ell}$ as defined below, so that for any section $f_{\chi_s}$ in the unramified induced representation
$$
\Ind^{G_{n}(F)}_{P_{j}(F)}(|\det|^{s}\tau\otimes\sig),
$$
the convolution $\CJ(f_{\chi_s}*\Phi_0)$ is supported in the Zariski open orbit, which will be specified below and
has the property that
$$
\CZ_{\nu}(s,f_{\chi_s}*\Phi_{0},f_{\mu},\psi_{\ell,\kappa})
=
Q(\chi_s,\mu)\cdot\CZ_{\nu}(s,f_{\chi_s},f_{\mu},\psi_{\ell,\kappa}).
$$
Since $\CJ(f_{\chi_s}*\Phi_0)$ is supported in the Zariski open orbit, the local zeta integral
$\CZ_{\nu}(s,f_{\chi_s}*\Phi_{0},f_{\mu},\psi_{\ell,\kappa})$ is
entire in $s$ and hence is expected to be $P(\chi_s,\mu)$ essentially.

Let $\CH(H_{n-\ell},K_{H})$ be the spherical Hecke algebra with convolution $\circ$ of all $K_H$-bi-invariant (smooth) functions
with compact supports on $H_{n-\ell}$.
Let $X_{i}$ for all $1\leq i\leq \tilde{m}_{H}$ be generators of the Hecke algebra $\CH(H_{n-\ell},K_{H})$. By the Satake isomorphism,
if $\nu$ is inert over $E$ and $E$ is the unramified quadratic field extension of $F$, the Hecke algebra can be realized as follows:
$$
\CH(H_{n-\ell},K_{H})\simeq \BC\bpair{X_{1},X_{1}^{-1},\dots,X_{\tilde{m}_{H}},X^{-1}_{\tilde{m}_{H}}}^{W(H_{n-\ell})};
$$
and if $\nu$ is split over $E$, the Hecke algebra can be realized as follows:
$$
\CH(H_{n-\ell},K_{H})\simeq \BC\bpair{X^{\pm 1}_{1},X^{\pm 1}_{2},\dots,X^{\pm 1}_{m-2\ell-1}}^{S_{m-2\ell-1}}.
$$
Here $S_{m-2\ell-1}$ is the symmetric group on the sets
$\cpair{X_{1},\dots,X_{m-2\ell-1}}$
and $\cpair{X^{-1}_{1},\dots,X^{-1}_{m-2\ell-1}}$.

Define an extended Hecke algebra as in \cite{GPSR97}:
$$
\CA_{H_{n-\ell}}:=\BC\bpair{X,X^{-1}}\otimes\CH(H_{n-\ell},K_{H}).
$$
Let $\Pi(\chi_s)$ be the unramified representation of $G_n(F)$ as defined in \S 4.2. We consider the subspace of all $K_{H}$-invariant vectors
$$
J^{*}_{\psi_{\ell,\kappa}}(\chi_s):=(J_{\psi_{\ell,\kappa}}(\chi_s))^{K_{H}}
$$
of the twisted Jacquet module $J_{\psi_{\ell,\kappa}}(\chi_s):=J_{\psi_{\ell,\kappa}}(\Pi(\chi_s))$. Although it is naturally
a module of the Hecke algebra $\CH(H_{n-\ell},K_{H})$, we may extend it to be a module of the extended Hecke algebra $\CA_{H_{n-\ell}}$
as follows: for $\phi\in J^{*}_{\psi_{\ell,\kappa}}(\chi_s)$ and $X\otimes \Phi\in \CA_{H_{n-\ell}}$,
$$
\phi*(X\otimes \Phi)=q_E^{-s}(\phi\circ\Phi),
$$
where $\phi\circ\Phi$ is the left action on $\phi$ via convolution. As in \cite{GPSR97}, define the \emph{support ideal} as follows:
$$
\CI_{supp}(\chi_s)=\cpair{\Phi\in \CA_{H_{n-\ell}}\mid J^{*}_{\psi_{\ell,\kappa}}(\chi_s)*\Phi \subseteq \Lam},
$$
where $\Lam$ is the smooth representation of $H_{n-\ell}(F)$ consisting of functions in $\Pi(\chi_s)$ supported in the open double cosets $P_{j}\eps_{0,1}\eta R_{\ell,w_{0}}$. More precisely, by Proposition~\ref{pro:Jac-mod} and~\ref{pro:Jac-mod-split}, the smooth
representation $\Lam$ can be realized via the following isomorphisms:
$$
\Lam\cong
 \ind^{H_{n-\ell}(F)}_{P'_{1,\ell}(F)} (|\det|^{-\frac{\ell}{2}+s}_E \tau_{(\ell)} \otimes \sigma^{w^{\ell}_b})
 $$
if $\nu$ is inert over $E$; and
 $$
 \Lam\cong
\ind^{\GL_{m-2j+1}(F)}_{\GL_{m-2j}(F)} (\sig)
$$
if $\nu$ is split over $E$. Here we use the assumption that $j=\ell+1$.

First we consider the case when $\ell=0$, which implies that $j=\ell+1=1$.
In the case, the twisted Jacquet functor is just the restriction to the subgroup $H_n(F)$ of $G_n(F)$.
By restricting to the subgroup $H_n(F)$, the induced representation
$$
\Pi=\Ind^{G_{n}}_{P_{1}}(|\cdot|_E^s\chi \otimes\sig)
$$
decomposes via an exact sequence of $H_n(F)$-modules, according to Proposition~\ref{pro:Jac-mod}.

If $\nu$ is inert over $E$, the case is similar to
\cite{GPSR97} and we have
$$
0\rightarrow \Ind^{H_{n}}_{G_{n-1}}(\sig)\rightarrow J_{\psi_{0,\kappa}}(\Pi)\rightarrow \Ind^{H_{n}}_{P'_{1}}(|t|_{E}^{\frac{1}{2}+s}\chi\otimes J_{\psi'_{0,\kappa}}(\sig))\rightarrow 0.
$$

If $\nu$ is split over $E$, more explanation is needed. The double coset decomposition
$$
P_{1,m-2,1}\bks \GL_{m}/H_{n}
$$
has 6 representatives for $m>2$, which are denoted by $\gamma_{i}$ for $1\leq i\leq 6$. Let $P_{1,m-2,1}\gamma_{1}H_{n}$ be  the open orbit, and $P_{1,m-2,1}\gamma_{i}H_{n}$ for $i=2$ or $i=3$ be the orbits with the greatest dimension in those orbits except the open orbit.
Using Proposition~\ref{pro:Jac-mod-split} repeatedly, we have
$$
0\rightarrow \ind^{\GL_{m-1}}_{\GL_{m-2}}(\sig) \rightarrow \Omega\rightarrow\Sigma\rightarrow 0,
$$
where
\begin{equation}\label{eq:Omega}
\Omega:=\{f\in \Pi\mid supp(f)\subseteq\cup^{3}_{i=1}P_{1,m-2,1}\gamma_{i} H_{n}\},
\end{equation}
and
$$
\Sigma:=
\Ind^{H_{n}}_{P_{1,m-2}}(|\cdot|^{\frac{1}{2}+s}\Theta\otimes\sig_{[0]})\oplus
\Ind^{H_{n}}_{P_{m-2,1}}(\sig_{[0]}\otimes|\cdot|^{-\frac{1}{2}-s}\Xi^{-1}).
$$

\begin{lem}\label{lm:l=0}
Assume that $\ell=0$ and $j=\ell+1=1$.
The support ideal $\CI_{supp}(\chi_s)$ contains
$$
\Phi_{0}=
\prod_{i}(1-q_E^{-\frac{1}{2}}\chi(\varpi)XX_{i})(1-q_E^{-\frac{1}{2}}\chi(\varpi)XX^{-1}_{i})
$$
if $\nu$ is inert over $E$, and
$$
\Phi_{0}=
\prod_{i}(1-q_E^{-\frac{1}{2}}\Theta(\varpi)XX_{i})(1-q_E^{-\frac{1}{2}}\Xi(\varpi)XX^{-1}_{i})
$$
if $\nu$ is split over $E$.
\end{lem}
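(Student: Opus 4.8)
The plan is to recast $\Phi_{0}\in\CI_{supp}(\chi_s)$ as an annihilation statement for the extended Hecke algebra $\CA_{H_{n-\ell}}$ acting on $K_{H}$-invariants of a quotient of the twisted Jacquet module, and then to settle that statement by a Satake-parameter bookkeeping. Since $\ell=0$ the functor $J_{\psi_{0,\kappa}}$ is just restriction to $H_{n}(F)$, and $V\mapsto V^{K_{H}}$ is exact on smooth $\BC$-representations of the compact group $K_{H}$ (it is a direct summand, via averaging). Applying it to $0\to\Lam\to J_{\psi_{0,\kappa}}(\chi_s)\to J_{\psi_{0,\kappa}}(\chi_s)/\Lam\to 0$ yields a short exact sequence of $\CH(H_{n},K_{H})$-modules, and the quotient map is in fact $\CA_{H_{n-\ell}}$-equivariant because the extra generator $X$ acts everywhere by the scalar $q_{E}^{-s}$. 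Hence $J^{*}_{\psi_{0,\kappa}}(\chi_s)*\Phi_{0}\subseteq\Lam$ is equivalent to: $\Phi_{0}$ annihilates $\bigl(J_{\psi_{0,\kappa}}(\chi_s)/\Lam\bigr)^{K_{H}}$. That is what I would prove, in the two cases separately.

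When $\nu$ is inert, the exact sequence recorded just above the lemma identifies $J_{\psi_{0,\kappa}}(\chi_s)/\Lam$ with the single normalized induced representation $Q=\Ind^{H_{n}}_{P'_{1}}(|t|_{E}^{\frac12+s}\chi\otimes J_{\psi'_{0,\kappa}}(\sigma))$, and for $\ell=0$ one has $J_{\psi'_{0,\kappa}}(\sigma)=\sigma$, which is unramified. By the Iwasawa decomposition $H_{n}(F)=P'_{1}(F)K_{H}$ together with $P'_{1}(F)\cap K_{H}=P'_{1}(\Fo)$, the space $Q^{K_{H}}$ is one-dimensional, so $\CH(H_{n},K_{H})$ acts on it through evaluation at the Satake parameter $t_{Q}$ of $Q$. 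Induction in stages shows that $t_{Q}$ has, among its coordinates, the value $|\varpi|_{E}^{\frac12+s}\chi(\varpi)=q_{E}^{-\frac12-s}\chi(\varpi)$ of the inducing character on the $\GL_{1}(E)$-block (and, the Weyl group of the unitary group $H_{n}$ being of type $B/C$, also its inverse $q_{E}^{\frac12+s}\chi(\varpi)^{-1}$). Evaluating $\Phi_{0}$ at $t_{Q}$ with $X\mapsto q_{E}^{-s}$, the factor attached to that coordinate becomes $1-q_{E}^{-\frac12}\chi(\varpi)\cdot q_{E}^{-s}\cdot q_{E}^{\frac12+s}\chi(\varpi)^{-1}=1-q_{E}^{0}=0$, so that the $-\frac12$ in $\Phi_{0}$ and the $\frac12+s$ in the twist cancel against $X\leftrightarrow q_{E}^{-s}$; the remaining factors contribute nonzero scalars, so $\Phi_{0}$ kills $Q^{K_{H}}$, proving the inert case.

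When $\nu$ splits, $H_{n}(F)\cong\GL_{m-1}(F)$ and $\Pi(\chi_s)|_{H_{n}}$ carries the longer filtration by the six $P_{1,m-2,1}$-double cosets: $\Lam$ is the open cell; the next two cells give $\Sigma=\Ind^{H_{n}}_{P_{1,m-2}}(|\cdot|^{\frac12+s}\Theta\otimes\sigma_{[0]})\oplus\Ind^{H_{n}}_{P_{m-2,1}}(\sigma_{[0]}\otimes|\cdot|^{-\frac12-s}\Xi^{-1})$ by Proposition~\ref{pro:Jac-mod-split}; and the three remaining, smaller cells induce from data built out of deeper Bernstein--Zelevinski derivatives of $\sigma$ along nontrivial characters. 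The strategy is: (i) show that the three smaller cells contribute no $K_{H}$-fixed vectors in the image of $\bigl(\Pi(\chi_s)|_{H_{n}}\bigr)^{K_{H}}$, so that $\bigl(J_{\psi_{0,\kappa}}(\chi_s)/\Lam\bigr)^{K_{H}}$ is accounted for by $\Sigma^{K_{H}}$, which is at most two-dimensional, one line per summand; (ii) on each line $\CH(\GL_{m-1},K_{H})$ acts through a Satake parameter whose distinguished $\GL_{1}$-coordinate turns out to be $q_{F}^{\frac12+s}\Theta(\varpi)^{-1}$, respectively $q_{F}^{-\frac12-s}\Xi(\varpi)$ (recall $q_{E}=q_{F}$ here), the inversion relative to the naive inducing character reflecting the identification \eqref{eq:iso-split}; (iii) matching the first against the factors of $\Phi_{0}$ of the form $1-q_{E}^{-\frac12}\Theta(\varpi)XX_{i}$ and the second against those of the form $1-q_{E}^{-\frac12}\Xi(\varpi)XX_{i}^{-1}$, the same cancellation as above makes one factor vanish on each line, so $\Phi_{0}$ kills $\Sigma^{K_{H}}$.

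The main obstacle lies entirely in the split case, in making steps (i) and (ii) precise. One must track, cell by cell in Proposition~\ref{pro:Jac-mod-split}, the half-integral $|\cdot|$-twists carried by $\sigma_{[0]}$ and by the derivatives appearing on the small cells, together with the explicit realization of $H_{n}(F)\subset G_{n}(F)$ as $\GL_{m-1}(F)\subset\GL_{m}(F)$, so that (a) the small cells genuinely contribute nothing unramified --- this is where the incompatibility of a nontrivial twisted character on a unipotent with $K_{H}$-invariance must be invoked, exactly as in Appendix~2 to \S5 of \cite{GPSR97} --- and (b) the distinguished $\GL_{1}$-coordinates of the two summands of $\Sigma$ come out as the precise reciprocals that make a single factor of $\Phi_{0}$ vanish, rather than a $q_{F}$-multiple of one. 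The inert case is the clean model: a single coordinate of a single induced representation does all the work, and the type $B/C$ Weyl symmetry supplies the needed inversion for free.
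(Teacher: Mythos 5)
Your overall strategy --- reduce membership in $\CI_{supp}(\chi_s)$ to annihilating the $K_{H}$-invariants of the boundary constituents of the Jacquet module, and detect the vanishing through Satake parameters --- is the same one the paper intends, since its proof is just a citation of the argument of Lemma 2.1 in \cite{GPSR97}. But there is a genuine gap at the pivotal step in both of your cases: you assert that $Q^{K_{H}}$ is one-dimensional (inert case), and that each summand of $\Sigma$ contributes a single line of $K_{H}$-fixed vectors (split case), so that $\CH(H_{n},K_{H})$ acts through evaluation at a single Satake parameter $t_{Q}$. This is false. The inducing datum on the non-$\GL_{1}$ factor is not an irreducible unramified representation: it is the Bessel-type restriction $J_{\psi'_{0,\kappa}}(\sig)$, i.e.\ essentially $\sig$ restricted to the smaller group $H_{n-1}$ (resp.\ the derivative/restriction $\sig_{[0]}$ in the split case), a smooth representation of infinite length. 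By the Iwasawa decomposition, $Q^{K_{H}}$ is identified with the space of $H_{n-1}(\Fo)$-fixed vectors in $\sig$, which is infinite-dimensional in general (the trivial type of the small compact subgroup occurs in infinitely many $K$-types of $\sig$), so there is no single parameter at which to evaluate and your cancellation computation does not yet apply to anything. What the cited argument actually supplies is the uniform statement: for $\ind^{H_{n}}_{P'_{1}}(\eta\otimes\sig')$ with $\eta$ an unramified character on the $\GL_{1}(E)$-block and $\sig'$ an \emph{arbitrary} smooth representation of $H_{n-1}$, the $\CH(H_{n},K_{H})$-action on the $K_{H}$-invariants factors, via the Satake isomorphism and the constant-term homomorphism along $P'_{1}$, through the specialization of one variable $X_{i}$ at the value dictated by $\eta\delta_{P'_{1}}^{1/2}$, uniformly in $\sig'$. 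One factor of $\Phi_{0}$ vanishes under that specialization (your arithmetic with $X\mapsto q_{E}^{-s}$ is the right computation), and the product over all $i$ and over both $X_{i}^{\pm1}$ is exactly what makes $\Phi_{0}$ Weyl-invariant, so the vanishing is insensitive to which coordinate gets specialized. Without this factorization statement your proof does not close; with it, the one-dimensionality claims become unnecessary.

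In the split case there is a second problem: your step (i), that the three smaller cells contribute no $K_{H}$-fixed vectors, is both left unproven (you flag it yourself as the main obstacle) and is not the right claim. Those cells are again compactly induced from parabolic data in which some $\GL$-block carries $|\cdot|^{\pm(\frac{1}{2}+s)}$-twists of the characters $\Theta_{i}$ or $\Xi_{i}$, so they do in general possess unramified vectors. The mechanism the paper uses (see the discussion of the general case $j=\ell+1$ immediately after the lemma) is that $\Phi_{0}$ annihilates the $K_{H}$-invariants of \emph{those} boundary components as well, by the same one-variable specialization. Replacing your step (i) by this annihilation claim, and the ``one line per summand'' claims by the factorization through the partial Satake homomorphism, repairs the argument and brings it in line with the proof of \cite[\S 2, Lemma 2.1]{GPSR97} that the paper invokes.
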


\begin{proof}
The proof follows the same argument used in \cite[\S 2, Lemma 2.1]{GPSR97}, which uses the Satake Isomorphism for $F$-quasisplit classical groups
and the definition of the support ideal $\CI_{supp}(\chi_s)$. We omit the details here.
\end{proof}

Next, we deal with the general case with $j=\ell+1$ for the relation between $H_{n-\ell}$ and $G_{n-\ell}$.

If $\nu$ is inert over $E$, by Proposition~\ref{pro:Jac-mod}, we have the exact sequence of $H_{n-\ell}(E)$ modules for $j=\ell+1$,
\begin{eqnarray*}
0\rightarrow \ind^{H_{n-\ell}}_{P''_{1}} |\cdot|^{s-\frac{\ell}{2}}_E \tau_{(\ell)} \otimes \sigma^{w^{\ell}_q}
\rightarrow\Pi_{\eps_{0,1}\eta_{\eps,I_{m-2\ell}}} \rightarrow  \CY\rightarrow 0
\end{eqnarray*}
where
$$
\CY:=
\ind^{H_{n-\ell}}_{P'_{1}} |\cdot| _{E}^{\frac{1-\ell}{2}+s} \tau^{(\ell)} \otimes J_{\psi'_{0,\kappa}}(\sig^{w^{\ell}_{q}}),
$$
and $\Pi_{\eps_{0,1}\eta_{\eps,I_{m-2\ell}}}$ is the smooth representation of $H_{n-\ell}$ consisting of functions in $\pi(\chi_s)$ which are
supported in $P_{j}\eps_{0,1}\eta_{\eps,I_{m-2\ell}}N_{\ell}G_{n-\ell}$.
Recall that $\tau^{(\ell)}$ is the $\ell$-th Bernstein-Zelevinski derivative of $\tau$, which is a representation of $\GL_{1}(E)$. Up to semi-simplification,
$$
\tau^{(\ell)}=\oplus^{j}_{i=1}\chi_{i}\otimes |\cdot|_{E}^{\frac{\ell}{2}},
$$
and then
$\CY\equiv \oplus^{j}_{i=1} \ind^{H_{n-\ell}}_{P'_{1}} |\det| _{E}^{\frac{1}{2}+s} \chi_{i} \otimes J_{\psi'_{0,\kappa}}(\sig^{w^{\ell}_{q}})$.

If $\nu$ is split, we apply Proposition~\ref{pro:Jac-mod-split} repeatedly and obtain the exact sequence
\begin{eqnarray*}
0\rightarrow \ind^{\GL_{m-2\ell-1}}_{\GL_{m-2j}}\sig_{(0)}\rightarrow \Omega
\rightarrow\CU\rightarrow 0,
\end{eqnarray*}
where $\CU$ is defined to be the following representation
$$
\Ind^{\GL_{m-2\ell-1}}_{P_{1,m-2j}}(|\det|^{\frac{1-\ell}{2}+s}(\tau_{1})^{(\ell)}\otimes \sig)
\oplus
\Ind^{\GL_{m-2\ell-1}}_{P_{1,m-2j}} (\sig\otimes |\det|^{\frac{\ell-1}{2}-s}(\tau^{*}_{2})^{[\ell]})
$$
and $\Omega$ is defined in \eqref{eq:Omega} consisting of functions supported in the first greatest orbits.
In this case, we have, up to semi-simplification,
$$
\tau^{(\ell)}_{1}=\oplus^{j}_{i=1}\Theta_{i}\otimes|\cdot|^{\frac{\ell}{2}}
\text{ and }
(\tau^{*}_{2})^{[\ell]}= \oplus^{j}_{i=1}\Xi^{-1}_{i}\otimes |\cdot|^{-\frac{\ell}{2}}.
$$

Note that $\Phi\in \CI_{supp}(\chi_s)$ if and only if $\Phi$ annihilates all the boundary components of $J_{\psi_{\ell,\kappa}}(\Pi)$, that is,
all the summands in Proposition~\ref{pro:Jac-mod} and  Proposition~\ref{pro:Jac-mod-split} except the space
$\ind^{H_{n-\ell}}_{P'_{1,\ell}} (|\det|^{-\frac{\ell}{2}+s}_E \tau_{(\ell)} \otimes \sigma^{w^{\ell}_b})$ and the space
$\ind^{\GL_{m-2\ell-1}}_{\GL_{m-2j}}\sig$, respectively. It is sufficient to annihilate the quotients in $\Pi_{\eps_{0,1}}$ and $\Omega$.

In order to annihilate $K_{H}$-fixed vectors in the space
$$
\oplus^{j}_{i=1}\ind^{H_{n-\ell}}_{P'_{1}} (|\det| _{E}^{\frac{1}{2}+s} \chi_{i} \otimes J_{\psi'_{0,\kappa}}(\sig^{w^{\ell}_{b}}))
$$
if $\nu$ is inert, and in the space
$$
\oplus^{j}_{i=1}\Ind^{\GL_{m-2\ell-1}}_{P_{1,m-2j}}(|\cdot|^{\frac{1}{2}+s}\Theta_{i}\oplus |\cdot|^{-\frac{1}{2}-s}\Xi^{-1}_{i})\otimes\sig
$$
(up to isomorphism) if $\nu$ is split,
as in Lemma~\ref{lm:l=0}, we may take the following specific element in $\CA_{H_{n-\ell}}$,
\begin{equation}
\Phi_{0}=\begin{cases}
\prod^{j}_{i=1}\prod^{\tilde{m}_{H}}_{i'=1}(1-q_E^{-\frac{1}{2}}\chi_{i}XX_{i'})(1-q_E^{-\frac{1}{2}}\chi_{i}XX^{-1}_{i'})&
\text{ if  $\nu$ is inert,}\\
\prod^{j}_{i=1}\prod^{m-2\ell-1}_{i'=1}(1-q_E^{-\frac{1}{2}}\Theta_{i}XX_{i'})(1-q_E^{-\frac{1}{2}}\Xi_{i} X X^{-1}_{i'})
& \text{ if  $\nu$ is split,}
\end{cases}
\end{equation}
which is an element of the support ideal $\CI_{supp}(\chi_s)$.
In addition, all the other boundary components of the Jacquet module $J_{\psi_{\ell,\kappa}}(\chi_{s})$ are of form
$$
\ind^{H_{n-\ell}}_{P'_{\beta}}(|\det|^{\frac{1-t}{2}+s}_{E}\tau^{(t)}\otimes \sig')
$$
if $\nu$ is inert, and of form
$$
\ind^{\GL_{m-2\ell-1}}_{P_{\beta,m-2\ell-1-\beta}}(|\det|^{\frac{1-t}{2}+s}\Xi_{\tau}\otimes\sig')
$$
or
$$
\ind^{\GL_{m-2\ell-1}}_{P_{m-2\ell-1-\beta,\beta}}
(\sig'\otimes |\det|^{-\frac{1-t}{2}-s}\Theta_{\tau})
$$
if $\nu$ is split,
where $\sig'$ is a suitable representation independent of $s$, more details of which can be found in Proposition~\ref{pro:Jac-mod} and  Proposition~\ref{pro:Jac-mod-split}.
It is easy to check that $\Phi_{0}$ also annihilates $K_{H}$-fixed vectors in those boundary components.

\begin{prop}\label{pro:Q}
With $\Phi_0\in\CI_{supp}(\chi_s)$ as chosen above, the following identity holds:
\begin{equation}
\CZ_{\nu}(s,f_{\chi}*\Phi_{0},f_{\mu},\psi_{\ell,\kappa})=
Q(\chi_s,\mu)\cdot\CZ_{\nu}(s,f_{\chi},f_{\mu},\psi_{\ell,\kappa})
\end{equation}
where
$Q(\chi_s,\mu)$ is defined to be
$$
\prod^{j}_{i=1}\prod^{\tilde{m}_{H}}_{i'=1}(1-q_E^{-\frac{1}{2}-s}\chi_{i}\mu_{i'})(1-q_E^{-\frac{1}{2}-s}\chi_{i}\mu^{-1}_{i'})
$$
if  $\nu$ is inert, and to be
$$
\prod^{j}_{i=1}\prod^{m-2\ell-1}_{i'=1}(1-q_E^{-\frac{1}{2}-s}\Theta_{i}\mu_{i'})(1-q_E^{-\frac{1}{2}-s}\Xi_{i}\mu^{-1}_{i'})
$$
if  $\nu$ is split.
Moreover, $\CZ_{\nu}(s,f_{\chi_s}*\Phi_{0},f_{\mu},\psi_{\ell,\kappa})$ is a polynomial function of parameters $\chi_{\tau}$ and in $q_E^{-s}$.
\end{prop}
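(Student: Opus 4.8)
The plan is to run the Hecke-algebra argument of \cite[\S 2]{GPSR97}, adapted to the quasi-split unitary (and orthogonal) groups here. Fix $\Re(s)$ large, so that the local zeta integral $\CZ_\nu(s,f,v,\psi_{\ell,\kappa})=\apair{\CJ(f),\CB_{\beta-1}(v)}$ converges absolutely. The first point is that this pairing factors through the twisted Jacquet module: $\CJ(f)$ depends on $f\in\Pi(\chi_s)$ only through its image in $J_{\psi_{\ell,\kappa}}(\Pi(\chi_s))$, and $\CB_{\beta-1}$ realizes $\pi(\mu)$ inside a full induced representation of $H_{n-\ell}(F)$, so $\CZ_\nu(s,\cdot,\cdot,\psi_{\ell,\kappa})$ descends to an $H_{n-\ell}(F)$-invariant bilinear form $B_s$ on $J_{\psi_{\ell,\kappa}}(\chi_s)\times\pi(\mu)$. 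For $\Phi\in\CH(H_{n-\ell},K_H)$ acting by right convolution and $\Phi^{\vee}(h):=\Phi(h^{-1})$, unfolding the convolutions against the $H_{n-\ell}(F)$-invariant measure gives the adjunction $B_s(\phi\circ\Phi,v)=B_s(\phi,v\circ\Phi^{\vee})$. Taking $\phi$ to be the image of the spherical vector $f_{\chi_s}$ and $v=f_\mu$ the spherical vector of $\pi(\mu)$, and using that $f_\mu$ is a simultaneous Hecke eigenvector with eigenvalue the Satake transform evaluated at the Satake parameter $c(\pi(\mu))$, one obtains, for the section $f_{\chi_s}*\Phi$ (whose image in $J_{\psi_{\ell,\kappa}}(\chi_s)$ is $\phi\circ\Phi$),
$$
\CZ_\nu(s,f_{\chi_s}*\Phi,f_\mu,\psi_{\ell,\kappa})=\widehat{\Phi^{\vee}}\!\bigl(c(\pi(\mu))\bigr)\cdot \CZ_\nu(s,f_{\chi_s},f_\mu,\psi_{\ell,\kappa}),
$$
and, extending scalars through the rule $X\mapsto q_E^{-s}$ that defines the $\CA_{H_{n-\ell}}$-module structure on $J^{*}_{\psi_{\ell,\kappa}}(\chi_s)$, the same relation with $\Phi$ replaced by $\Phi_0\in\CA_{H_{n-\ell}}$.

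Next I would plug in the explicit $\Phi_0$. Because $\Phi_0$ is symmetric under $X_{i'}\leftrightarrow X_{i'}^{-1}$, one has $\Phi_0^{\vee}=\Phi_0$, so the scalar produced is simply the value obtained by substituting the components of $c(\pi(\mu))$ for the generators $X_{i'}$ and $q_E^{-s}$ for $X$ in the product defining $\Phi_0$; a direct evaluation identifies it with exactly $Q(\chi_s,\mu)$ --- in the inert case via $X_{i'}\mapsto\mu_{i'}(\varpi_E)$, in the split case via $X_{i'}\mapsto\mu_{i'}$ for $1\le i'\le m-2\ell-1$. This gives $\CZ_\nu(s,f_{\chi_s}*\Phi_0,f_\mu,\psi_{\ell,\kappa})=Q(\chi_s,\mu)\,\CZ_\nu(s,f_{\chi_s},f_\mu,\psi_{\ell,\kappa})$ for $\Re(s)$ large, and by Bernstein's rationality theorem (both sides being rational in $\chi_s$ and $\mu$) for all $s$.

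Finally I would prove that $\CZ_\nu(s,f_{\chi_s}*\Phi_0,f_\mu,\psi_{\ell,\kappa})$ is a genuine polynomial in $q_E^{-s}$ and in $\chi_\tau$. Here the membership $\Phi_0\in\CI_{supp}(\chi_s)$ is exactly what is needed: by the definition of the support ideal, the image of $f_{\chi_s}*\Phi_0$ in $J_{\psi_{\ell,\kappa}}(\chi_s)$ lies in $\Lam$, so $\CJ(f_{\chi_s}*\Phi_0)$ may be computed as though $f_{\chi_s}*\Phi_0$ were supported in the single open double coset $P_j\eps_{0,1}\eta R_{\ell,w_{0}}$; by the realizations of $\Lam$ recorded above (using $j=\ell+1$) it then becomes a section of the \emph{compactly} induced representation $\ind^{H_{n-\ell}}_{P'_{1,\ell}}(|\det|_E^{-\ell/2+s}\tau_{(\ell)}\otimes\sigma^{w^{\ell}_q})$ in the inert case, resp.\ of $\ind^{\GL_{m-2j+1}}_{\GL_{m-2j}}(\sigma)$ in the split case. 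Pairing such a compactly supported section against the full-induction Bessel function $\CB_{\beta-1}(f_\mu)$ and using the Iwasawa decomposition of $H_{n-\ell}(F)$ along $R^{\eta}_{\ell,\beta-1}(F)$, the defining integral collapses to a finite sum over $K_H$-coset representatives whose terms are products of unramified values of the inducing data with spherical Whittaker/Bessel function values; the compactness of the support forces the exponents of $q_E^{-s}$ and of $\chi_\tau$ occurring to be nonnegative, so the sum is polynomial. Combining the three steps gives \eqref{eq:Zeta} with the stated $Q(\chi_s,\mu)$ and $P(\chi_s,\mu)=\CZ_\nu(s,f_{\chi_s}*\Phi_0,f_\mu,\psi_{\ell,\kappa})$.

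The hard part will be this last step --- the absolute convergence and the polynomial (rather than merely rational) nature of the open-orbit integral, which is what genuinely extends the appendices to \S 5 of \cite{GPSR97}. The estimates there must be redone when $\nu$ splits in $E$: since $E_\nu\cong F_\nu\times F_\nu$, the combinatorics of the double cosets $P_j\bks G_n/P_\ell$ and $P'_w\bks G_{n-\ell}/H_{n-\ell}$ --- hence the geometry of the open orbit and the precise compactly-induced model of $\Lam$ --- differ from the field case, and this is the source of the additional technical work carried out in \S 4.
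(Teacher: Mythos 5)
Your proposal is correct and follows essentially the same route as the paper, which simply invokes the argument of Theorem 5.1 of \cite{GPSR97}: the factor $Q(\chi_s,\mu)$ comes from the extended Hecke algebra acting through the Satake eigenvalue of the spherical vector $f_{\mu}$ (with $X\mapsto q_E^{-s}$), and membership of $\Phi_{0}$ in $\CI_{supp}(\chi_s)$ forces $\CJ(f_{\chi_s}*\Phi_{0})$ into the open-orbit space $\Lam$, whose pairing with the Bessel functional converges for all $s$ and yields polynomiality. One small caveat: your claim $\Phi_{0}^{\vee}=\Phi_{0}$ only holds in the inert case, where the Weyl group of $H_{n-\ell}$ contains the inversions $X_{i'}\mapsto X_{i'}^{-1}$; in the split case the adjunction produces $\widehat{\Phi_{0}}$ evaluated at the inverse Satake parameters, and matching this with the stated $Q(\chi_s,\mu)$ requires keeping track of the duality/labeling of $\Theta_{i}$ versus $\Xi_{i}$ rather than appealing to symmetry of $\Phi_{0}$.
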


\begin{proof}
The proof is similar to the proof of Theorem 5.1 in \cite{GPSR97}.
In fact, $\CJ(f_{\chi_s}*\Phi_{0})(h)$, as defined in Section 4.1, belongs to the space $\Lam$, which is independent of the choice of $\sig$.
Also $\CJ(f_{\chi_s}*\Phi_{0})(h)$ is analytic in $s$ because of the support of $\CJ(f_{\chi_s}*\Phi_{0})$.
The local zeta integral is equal to the pairing the function $\CJ(f_{\chi_s}*\Phi_{0})(h)$ with a Bessel function as in (4.3),
and is absolutely convergent for all $s$. Hence
the zeta function $\CZ_{\nu}(s,f_{\chi_s}*\Phi_{0},f_{\mu},\psi_{\ell,\kappa})$ is a polynomial function of $q_E^{s}$ and $q_E^{-s}$
for all choice of $\pi$ and all $s$.
\end{proof}

Remark that the proof of this proposition only uses the genericity of $\tau$, which is true because $\tau$ is the unramified local component of
the corresponding irreducible automorphic representation of $\GL_j(\BA_E)$ as given in (2.13). Hence it holds for all choices $\chi_\sig$ and $\mu$, and therefore, for all irreducible unramified representations $\sig$ and $\pi$.

Following the definition of the unramified local tensor product $L$-functions as in \eqref{eq:Lme} and \eqref{eq:Lmo}, and Proposition
\ref{pro:Q}, one must have the following identity:
$$
Q(\chi_s,\mu)=\begin{cases}
L^{-1}(\frac{1}{2}+s,\tau\times\pi)d(\chi_{\tau},s) &\text{ if $\nu$ is inert,}\\
L^{-1}(\frac{1}{2}+s,\tau\times\pi) &\text{ if $\nu$ is split.}
\end{cases}
$$
where
$$
d(\chi_{\tau},s)=\begin{cases}
\prod^{j}_{i=1}(1-q_E^{-\frac{1}{2}-s}\chi_{i})^{-1} & \text{ if $m$ is even and $\nu$ is inert};\\
1 & \text{ otherwise}.
\end{cases}
$$
Note that $d(\chi_{i},s)=(1-q_E^{-\frac{1}{2}-s}\chi_{i})^{-1}$.
Thus, based on the calculation of $Q(\chi_s,\mu)$, we have a unique choice of $P(\chi_s,\mu)$.

\subsection{Calculation of $P(\chi_s,\mu)$}\label{sec:P}
In this section, we will first calculate the numerator $P(\chi_s,\mu)$ when $\Pi$ and $\pi$ are \emph{generic spherical}, and then
extend the results to general case by {\it Density Principle} in Appendix IV to \cite[Section 5]{GPSR97}.

Choose suitable functions $\varphi_{1}\in \CS(G_{n})$ and $\varphi_{2}\in \CS(H_{n-\ell})$ such that
\begin{align*}
&f_{\chi_s}(g)=\int_{B}\varphi_{1}(bg)\chi_s^{-1}\delta^{\frac{1}{2}}_{B}(b)\ud_{l}b,\\
&f_{\mu}(g)=\int_{B_{H}}\varphi_{2}(bg)\mu^{-1}\delta^{\frac{1}{2}}_{B_{H}}(b)\ud_{l}b,
\end{align*}
where ${\rm d}_{l}b$ is the left invariant Haar measure on Borel subgroups. Then we define a linear functional in the hom space \eqref{eq:hom-local},
\begin{align*}
T(f_{\chi_s},f_{\mu}):=&\int_{R_{\ell,\kappa}}f_{\chi_s}(\eps_{0,1}\eta nm )f_\mu(m)
\psi_{\ell,\kappa}(n)\ud n\ud m.
\end{align*}
Remark that properties of $T$ are studied in \cite{K08} when $\nu$ is inert.

\begin{lem}
$$
\CZ_{\nu}(s,f_{\chi_s},f_{\mu},\psi_{\ell,\kappa})=T(f_{\chi_s}, f_{\mu}).
$$
\end{lem}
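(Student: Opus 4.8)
The plan is to show that the Gross--Prasad type linear functional $T(f_{\chi_s},f_{\mu})$ agrees with the local zeta integral $\CZ_\nu(s,f_{\chi_s},f_{\mu},\psi_{\ell,\kappa})=\apair{\CJ(f_{\chi_s}),\CB_{\beta-1}(f_\mu)}$ because both of them lie in the one-dimensional $\Hom$-space \eqref{eq:hom-local} and take the same value on the normalized spherical vectors. First I would recall that by the local uniqueness of Bessel models (\cite{AGRS10}, \cite{SZ12}, \cite{JSZ11}, \cite{GGP12}), which was invoked above to define $\CZ_\nu$, the space
$$
\Hom_{N_\ell\times H^\triangle_{n-\ell}}(\Pi(\tau,\sig,s)\otimes\pi,\psi_{\ell,\kappa})
$$
is at most one-dimensional when $\Pi(\chi_s)$ is irreducible, i.e.\ for $\chi_s$ in a Zariski-dense (hence generic) subset of the parameter space. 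So it suffices to check that $T(f_{\chi_s},f_{\mu})$ genuinely defines a nonzero element of this $\Hom$-space (or at worst an element, which then must be a scalar multiple of $\CZ_\nu$) and then to pin down the scalar to be $1$ by evaluating at the spherical data.

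The main steps are as follows. (1) Verify convergence of $T(f_{\chi_s},f_{\mu})$ for $\Re(s)$ large and its analytic continuation as a rational function of $\chi_s,\mu$; this follows the same gauge-estimate argument as for $\CZ_\nu$, i.e.\ the argument of Theorem~A of Appendix~(I) to \S5 of \cite{GPSR97}, together with Bernstein's rationality. (2) Check the equivariance: for $n\in N_\ell(F)$ one gets a factor $\psi_{\ell,\kappa}(n)$ from translating $f_{\chi_s}$, and for $h\in H_{n-\ell}(F)$ embedded diagonally one uses the left $B$- and $B_H$-invariance built into the integral representations $f_{\chi_s}(g)=\int_B\varphi_1(bg)\chi_s^{-1}\delta_B^{1/2}$ and $f_\mu(g)=\int_{B_H}\varphi_2(bg)\mu^{-1}\delta_{B_H}^{1/2}$ to absorb the translation; hence $T\in\Hom_{N_\ell\times H^\triangle_{n-\ell}}(\Pi\otimes\pi,\psi_{\ell,\kappa})$. (3) Unfold $T(f_{\chi_s},f_{\mu})$ by substituting the integral representation for $f_{\chi_s}$: the integration over $R_{\ell,\kappa}$ against $\psi_{\ell,\kappa}$ produces exactly the twisted-Jacquet / Whittaker data on the $\GL_j$-part, reproducing the partial Whittaker function $W_j(f)$ and then $\CJ(f)$ as defined in Subsection 4.1, while the integral representation of $f_\mu$ turns the remaining $h$-integral into the pairing $\apair{\CJ(f_{\chi_s}),\CB_{\beta-1}(f_\mu)}$ against the (unique up to scalar) spherical Bessel model. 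Equivalently, once both sides are known to be proportional by uniqueness, one evaluates at $f_{\chi_s}=f_{\chi}$, $f_\mu=f_\mu$ the normalized spherical vectors, for which the standard Casselman--Shalika-type computation (as in the orthogonal case of \cite{GPSR97}) shows the two unfoldings agree term by term; this fixes the proportionality constant to be $1$. (4) Finally extend from generic $\chi_s$ (where $\Pi(\chi_s)$ is irreducible and uniqueness applies) to all unramified $\chi_s,\mu$ by the Density Principle of Appendix IV to \S5 of \cite{GPSR97}, since both sides are rational in the parameters.

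The hard part will be step (2)--(3): carefully matching the unfolded form of $T$ with the defining expression of $\CJ(f)$, keeping track of the precise representatives $\eps_{0,1}$, $\eta=\eta_{\eps,\gamma}$ and of the modulus characters $\delta_B^{1/2}$, $\delta_{B_H}^{1/2}$ so that the identification is on the nose rather than up to an unspecified constant — the normalization of Haar measures on $N_\ell$, $R_{\ell,\kappa}$ and on $R^\eta_{\ell,\beta-1}(F)\bs H_{n-\ell}(F)$ has to be chosen compatibly. Once the measures and the spherical normalization $f_\chi(e_G)=f_\mu(e_H)=1$ are fixed, the equality $\CZ_\nu(s,f_{\chi_s},f_\mu,\psi_{\ell,\kappa})=T(f_{\chi_s},f_\mu)$ is then immediate from uniqueness plus evaluation at the spherical data. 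This lemma is the bridge that lets us compute $P(\chi_s,\mu)$ via the more tractable integral $T(f_{\chi_s},f_\mu)$ in the rest of Subsection~\ref{sec:P}.
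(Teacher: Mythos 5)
Your first route — substitute the Jacquet-type integral representations of $f_{\chi_s}$ and $f_{\mu}$, interchange the integrations in the range of absolute convergence so that $T(f_{\chi_s},f_{\mu})$ reassembles into the pairing $\apair{\CJ(f_{\chi_s}),\CB_{\beta-1}(f_{\mu})}$, and then extend the identity by rationality in the parameters — is exactly what the paper intends: its proof is a one-line appeal to Theorem (A) of Appendix I to \S 5 of \cite{GPSR97}, which carries out precisely this direct identification (convergence estimates plus Fubini, with the measures on $N_\ell$, $R_{\ell,\kappa}$ and $R^{\eta}_{\ell,\beta-1}(F)\backslash H_{n-\ell}(F)$ fixed compatibly) in the orthogonal case. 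Your observation that the real work is the bookkeeping of representatives, modulus characters and Haar measures is therefore on target.

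The gap is in the shortcut you actually lean on at the end: ``uniqueness of the Bessel $\Hom$-space plus evaluation at the normalized spherical vectors fixes the constant to be $1$'' is circular. Uniqueness only yields that $T$ and $\CZ_{\nu}$, viewed as functionals on $\Pi(\chi_s)\otimes\pi(\mu)$ for generic parameters, differ by a scalar $c(\chi_s,\mu)$, and this scalar is a priori a rational function of $\chi_s$ and $\mu$ (it also absorbs the unspecified normalizations of $\CB_{\beta-1}$ and of the $\sigma$-pairing), not a constant; since the lemma is precisely the assertion that the two functionals agree on the spherical pair, evaluating both sides there gives no information about $c$. To pin $c$ down you must compute the two functionals independently on some test vector where both are accessible — for instance a section supported on the open cell $P_j\,\eps_{0,1}\eta\, R_{\ell,w_0}$, which is how the paper later handles the analogous normalization issue with the section $f^{*}$ in the proof of the final theorem of Section 4 — or else simply carry the direct unfolding through to the end, in which case the uniqueness step is superfluous. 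So keep the direct matching (your step (3), first half) as the proof, and either drop the uniqueness/normalization argument or replace the spherical evaluation by an open-cell test-vector computation; the density principle is then only needed to pass from generic to all unramified parameters, as you say.
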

\begin{proof}
For all unramified places, the proof is similar to the orthogonal case as Theorem (A) in Appendix I to \cite[Chapter 5]{GPSR97}.
\end{proof}

{\bf Case $\ell=0$:}\
First of all, we consider the case $\ell=0$, and hence $j=1$. The Bessel period is also studied by Gan, Gross, and Prasad in \cite{GGP12}.
Referring to \cite[Proposition 2.5]{Har12}, we have the following inductive formula
$$
T(f_{\chi_{1}\otimes\chi_{\sig}},f_{\mu})=
\frac{L(\frac{1}{2},\chi_{1}\times\pi)}{L(1,\chi_{1}\times\sig)L(1,\xi^{m}_{E/F}\otimes\chi_{1})}T(f_{\mu},f_{\chi_{\sig}})
$$
if $\nu$ is inert, and
$$
T(f_{\chi_{1}\otimes\chi_{\sig}},f_{\mu})=
\frac{L(\frac{1}{2},\Theta_{1}\times\pi)L(\frac{1}{2},\Xi_{1}\times\tilde{\pi})}{L(1,\Theta_{1}\times\tilde{\sig})L(1,\Xi_{1}\times\sig)L(1,\Theta_{1}\Xi_{1})} $$
if $\nu$ is inert, for any quasi-character $\chi_1$. Correspondingly, one has
$$
Q(\chi_{1}\otimes\chi_{\sig},\mu)=\begin{cases}
[L\ppair{\frac{1}{2},\chi_{1}\times\pi}d(\chi_{1})]^{-1} & \text{ if $\nu$ is inert,}\\
[L(\frac{1}{2},\Theta_{1}\times\pi)L(\frac{1}{2},\Xi_{1}\times \tilde{\pi})]^{-1}& \text{ if $\nu$ is split}.
\end{cases}
$$
which is the same as the result of Proposition~\ref{pro:Q}. Hence one has
\begin{equation}
P(\chi_{1}\otimes\chi_{\sig},\mu)=\frac{d(\chi_{1})}{L(1,\chi_{1}\times\sig)L(1,\chi_{1}\otimes \xi^{m}_{E/F})}T(\mu,\chi_{\sig}). \label{eq:l=0-P}
\end{equation}
Note that $P(\chi_{1}\otimes\chi_{\sig},\mu)$ is a polynomial function of the parameter $\chi_{1}$, and $L(1,\chi_{1}\otimes\xi^{m}_{E/F})=L(1,\Theta_{1}\otimes\Xi_{1})$. A comment with the notation $\chi_1$ is in order. The above discussion
holds for all quasi-characters $\chi_1$ and hence the variable $s$ is carried by this character $\chi_1$ here.

{\bf General Case $\ell>0$:}\
In the discussion below, we also assume that $\chi$ is a general quasi-character, i.e. we take $\chi$ to be $\chi_s$ here,
since the proof works for any quasi-character $\chi$. Hence in the
discussion, there will be no variable $s$. However, the variable $s$ will be put back to the final formula.

Let $\omega$ be an element of Weyl group $W(H_{n-\ell})$ and $I_{\omega}$ be the intertwining operator mapping $\Pi(\chi)$ to $\Pi(\ome\chi)$.
By the uniqueness of Bessel model, we have a local gamma factor $\gamma_{\ome}(\chi,\gamma)$ defined by
$$
T(I_{\ome}(f_{\chi}),f_{\mu})=\gamma_{\ome}(\chi,\mu)T(f_{\chi},f_{\mu}).
$$
Note that the definition of $\gamma_{\ome}$ is independent with non-trivial choice of $T$.
In order to calculate the general case $\ell>0$. We need to calculate the local gamma factor $\gamma_{\ome}$.

When $\nu$ is inert, let $\{\beta_{i}\mid 1\leq i\leq \tilde{m}\}$ be a set of simple roots of $G_{n}$.
Then the sets $\{\beta_{i}\mid 1\leq i\leq \ell\}$ and $\{\beta_{i}\mid \ell+2\leq i\leq \tilde{m}\}$ are also sets of simple roots of $\GL_{\ell+1}(E)$ and $H(W_{\ell+1})$ respectively.

When $\nu$ is split, let $\{\beta{_{i}}\mid 1\leq i\leq m-1\}$ be a set of simple roots of $\GL_{m}$.
Recall that $P_{\ell+1,m-2\ell-2,\ell+1}$ is a standard parabolic subgroup of $\GL_{m}$ with the Levi subgroup $\GL_{\ell+1}\times \GL_{m-2\ell-2}\times \GL_{\ell+1}$.
Then the set $\{\beta_{i}\mid 1\leq i\leq \ell\}$ and $\{\beta_{i}\mid m-\ell\leq i\leq m-1\}$ are sets of simple roots of the general linear groups of the Levi subgroup, and $\{\beta_{i}\mid \ell+2\leq i\leq m-\ell-2\}$ is the set of simple roots of the subgroup $\GL_{m-2\ell-2}$ of the Levi subgroup.
Let $\ome_{i}$ be the simple reflection corresponding to the simple root $\beta_{i}$.

\begin{lem}\label{lm:gamma}
If $\nu$ is inert, then
$$
\gamma_{\ome_{i}}(\chi,\mu)=\begin{cases}
\frac{1-\chi_{i+1}\chi^{-1}_{i}q_E^{-1}}{1-\chi_{i}\chi_{i+1}^{-1}} &\text{ if } 1\leq i\leq \ell,\\
\gamma_{\ome_{i}}(\chi_{\ell+1}\otimes\chi_{\sig},\mu) &\text{ if } \ell+1\leq i\leq \tilde{m}.
\end{cases}
$$
If $\nu$ is split, then the gamma factor $\gamma_{\ome_{i}}(\chi,\mu)$ is equal to
$$
\begin{cases}
\frac{1-\chi_{i+1}\chi^{-1}_{i}q_E^{-1}}{1-\chi_{i}\chi_{i+1}^{-1}}&\text{ if } 1\leq i\leq \ell \text{ or } m-\ell\leq i\leq m,\\
\gamma_{\ome_{i}}(\chi_{\ell+1}\otimes\chi_{\sig}\otimes\chi_{m-\ell},\mu)&\text{ if } \ell+1\leq i\leq m-\ell-1 .
\end{cases}
$$
\end{lem}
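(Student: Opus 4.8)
The plan is to reduce the computation of each $\gamma_{\ome_i}(\chi,\mu)$ to a rank-one calculation by exploiting the factorization properties of the Bessel functional $T$. First I would record two general facts: by the uniqueness of the local Bessel model the Hom-space \eqref{eq:hom-local} is at most one dimensional, so $\gamma_{\ome}$ is well defined and independent of the nonzero choice of $T$, and it satisfies the cocycle relation $\gamma_{\ome\ome'}(\chi,\mu)=\gamma_{\ome}(\ome'\chi,\mu)\,\gamma_{\ome'}(\chi,\mu)$; hence it suffices to treat the simple reflections $\ome_i$. Throughout I would use the explicit realization of $T$ through the Borel integrals of $f_\chi$ and $f_\mu$ introduced above, carry out all manipulations for $\Re(s)$ large where the integrals converge absolutely, and then invoke the Bernstein rationality theorem (\cite{GPSR87}, \cite{Bn98}) to conclude that the resulting identities hold identically in $(\chi,\mu)$. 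By the preceding lemma this simultaneously computes $\gamma_\ome$ for the unramified local zeta integral $\CZ_\nu$.

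For the simple roots $\beta_i$ lying in a general linear block of the Levi subgroup --- that is, $1\le i\le\ell$, and in the split case also $m-\ell\le i\le m-1$ --- the reflection $\ome_i$ only permutes the two adjacent inducing characters $\chi_i,\chi_{i+1}$ and fixes all other data. The root subgroup $U_{\beta_i}$ lies in $Z_\ell\subset N_\ell$ (or in the ``dual'' copy in the split case), and $\psi_{\ell,\kappa}$ restricts to it as the standard additive character. I would conjugate $I_{\ome_i}$ across $U_{\beta_i}$ and change variables, which reduces $T(I_{\ome_i}f_\chi,f_\mu)$ to $T(f_\chi,f_\mu)$ times a rank-one $p$-adic integral over the $\GL_2(E)$- (respectively $(\GL_1\times\GL_1)(F)$-) subgroup attached to $\beta_i$; this Tate-type integral evaluates on unramified data to the classical local coefficient $\dfrac{1-\chi_{i+1}\chi_i^{-1}q_E^{-1}}{1-\chi_i\chi_{i+1}^{-1}}$. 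This is precisely the rank-one computation of \cite[\S5]{GPSR97}, unchanged in the unitary setting, so I would only indicate the matrix bookkeeping; the split case for $m-\ell\le i\le m-1$ follows from the case $1\le i\le\ell$ by the $g\mapsto g^{*}$ symmetry built into the definitions of $\Pi(\chi_s)$ and of ${}^{L}\RU_m$.

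For the remaining simple roots $\beta_i$ of the inner group --- $\ell+1\le i\le\tilde m$ in the inert case, $\ell+1\le i\le m-\ell-1$ in the split case --- the reflection $\ome_i$ centralizes the $\GL_{\ell+1}$-data, hence commutes with the integration over $Z_\ell$ (with its Whittaker character) and over the $\GL_\ell$-torus inside $R_{\ell,\kappa}$. Performing that integration is exactly the restriction-plus-partial-Fourier-expansion reduction already carried out in Section~\ref{sec:eulerian} (and in \cite[Chapter~5]{GRS11}): it identifies $T$ with the $\ell=0$ Bessel functional for the pair $(G_{n-\ell},H_{n-\ell})$ with inducing data $\chi_{\ell+1}\otimes\chi_\sig$ in the inert case and $\chi_{\ell+1}\otimes\chi_\sig\otimes\chi_{m-\ell}$ in the split case, the extra character $\chi_{m-\ell}$ being the surviving ``dual'' $\GL_1$-factor of the split $\tau$. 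Under this identification $I_{\ome_i}$ on $\Pi(\chi)$ becomes the corresponding intertwining operator on the $\ell=0$ principal series, which gives $\gamma_{\ome_i}(\chi,\mu)=\gamma_{\ome_i}(\chi_{\ell+1}\otimes\chi_\sig,\mu)$, respectively $\gamma_{\ome_i}(\chi_{\ell+1}\otimes\chi_\sig\otimes\chi_{m-\ell},\mu)$, as asserted; the base case $\ell=0$ is supplied by the inductive formula recalled above from \cite{Har12} and \cite{GGP12}.

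The step I expect to be the main obstacle is the matrix bookkeeping in the split case: I would need to track carefully how conjugation by $\ome_i$ and by the representatives $\eps_{0,1}$ and $\eta$ interacts with the unipotent domain $R_{\ell,\kappa}$, with the Gelfand--Graev character $\tilde\psi$, and with the isomorphism $G_n(F_\nu)\cong\GL_m(F_\nu)$, so as to verify both that the rank-one reduction for the block $m-\ell\le i\le m-1$ genuinely produces the same ratio via the $g\mapsto g^{*}$ symmetry and that the inner reduction for the middle block produces exactly the character $\chi_{\ell+1}\otimes\chi_\sig\otimes\chi_{m-\ell}$ with no unwanted shift in $s$. A secondary point is to justify the interchanges of integration in these reductions, which I would handle by the uniform moderate-growth estimates as in Appendix~2 to \S5 of \cite{GPSR97} together with the rationality already invoked.
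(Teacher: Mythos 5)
Your route and the paper's genuinely differ: the paper offers no argument for this lemma at all, disposing of it by citation --- the inert case to Khoury's thesis \cite[Proposition 11.1]{K08} and the split case to \cite{Z12} --- whereas you sketch the direct computation that those references actually carry out. Your architecture is the standard and correct one, and it is consistent with the stated values: well-definedness of $\gamma_{\ome}$ from the one-dimensionality of the Hom-space \eqref{eq:hom-local}; the Casselman--Shalika type rank-one local coefficient $\frac{1-\chi_{i+1}\chi^{-1}_{i}q_E^{-1}}{1-\chi_{i}\chi_{i+1}^{-1}}$ for the simple roots lying in the $\GL_{\ell+1}$-block (where $\psi_{\ell,\kappa}$ restricts to a generic character, so the Bessel functional behaves there like a Whittaker functional); and the reduction of the remaining simple reflections to the $\ell=0$ gamma factor attached to the data $\chi_{\ell+1}\otimes\chi_{\sig}$ (resp.\ $\chi_{\ell+1}\otimes\chi_{\sig}\otimes\chi_{m-\ell}$ in the split case), with the base case supplied by the $\ell=0$ discussion via \cite{Har12} and \cite{GGP12}. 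What the paper's citations buy is exactly the part you yourself flag as the ``main obstacle'': the verification that conjugation by $\eps_{0,1}\eta$ and the integration over $N_\ell$ against $\psi_{\ell,\kappa}$ really convert $T$ into the $\ell=0$ Bessel functional with precisely those inducing characters and no unwanted shift, together with the split-case bookkeeping under the identification $G_n(F_\nu)\cong\GL_m(F_\nu)$ and the Gelfand--Graev character $\tilde{\psi}$; that matrix-level computation is the actual content of \cite{K08} and \cite{Z12}, so in a self-contained write-up it would have to be executed (or cited) rather than asserted, as you do here. Two minor points: the cocycle relation for $\gamma_{\ome}$ is not needed for the lemma itself, which only concerns simple reflections (it is used implicitly later in the computation of $P^{*}$), and the interchange-of-integration issues you defer to the moderate-growth estimates of Appendix 2 to \S 5 of \cite{GPSR97} are indeed handled the same way elsewhere in this section, so that part of your plan matches the paper's conventions.
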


\begin{proof}
Khoury proved the inert case in \cite[Proposition 11.1]{K08}.
For the split case, the proof is given in \cite{Z12}.
\end{proof}

Now, we normalize the numerator $P(\chi,\mu)$ by
$$
P^{*}(\chi,\mu)=\frac{\zeta(\chi,1)T(\mu,\chi_{\sig})}{P(\chi,\mu)}.
$$
Note that $T(\mu,\chi_{\sig})$ is the pairing for $\pi(\mu)$ and $\sig$.

Following \cite{CS80} and \cite[Section 3.5]{Sh10}, the functions $\zeta(\chi,t)$ can be defined as follows.
When $\nu$ is inert, if $m$ is even, $\zeta(\chi,t)$ is defined by
$$
\prod_{1\leq i_{1}<i_{2}\leq \tilde{m}}(1-\chi_{i_{1}}\chi_{i_{2}}^{-1}q^{-t})
(1-\chi_{i_{1}}\chi_{i_{2}}q^{-t})\cdot\prod_{1\leq i\leq \tilde{m}}(1-\chi_{i}q_{F}^{-t});
$$
and if $m$ is odd, $\zeta(\chi,t)$ is defined by
$$
\prod_{1\leq i_{1}<i_{2}\leq \tilde{m}}(1-\chi_{i_{1}}\chi_{i_{2}}^{-1}q^{-t})
(1-\chi_{i_{1}}\chi_{i_{2}}q^{-t})\cdot\prod_{1\leq i\leq \tilde{m}}(1+\chi_{i}q_{F}^{-t})(1-\chi_{i}q^{-t}).
$$
When $\nu$ is split, $\zeta(\chi,t)=\prod_{1\leq i_{1}<i_{2}\leq m}(1-\chi_{i_{1}}\chi^{-1}_{i_{2}}q^{-t})$.
Note that $q=q_E$ in the above formulas, in order to simplify the notation.
In addition, if $\tilde{m}=1$, $\zeta(\chi, t)=1$ for all cases.
Remark that $\zeta(\chi,t)$ is the zeta polynomial function associated to $G_{n}$ as in \cite[Page 157]{GPSR97}.

For the case $\ell=0$, according to~\eqref{eq:l=0-P}, we have
\begin{equation}\label{eq:l=0}
P^{*}(\chi_{1}\otimes\chi_{\sig},\mu)=\frac{\zeta(\chi_{\sig},1)}{d(\chi_{1})},
\end{equation}
where $\zeta(\chi_{\sig},1)$ is the zeta polynomial function associated with $H_{n}$, as in \cite[Page 157]{GPSR97}.

\begin{cor}\label{cor:gamma}
If $1\leq i\leq \ell$, or $m-\ell\leq i\leq m-1$ when $\nu$ is split, then
$$
P^{*}(\ome_{i}\chi,\mu)=P^{*}(\chi,\mu).
$$

If $i=\ell+1$ when $\nu$ is inert, or $i=\ell+1$ or $m-\ell-1$ when $\nu$ is split, then
$$
\frac{P^{*}(\chi,\mu)}{P^{*}(\ome_{i}\chi,\mu)}=\frac{\zeta(\chi_{\sig},1)d(\chi_{i})}{\zeta(\chi_{\sig'},1)d(\chi_{i+1})}.
$$
where $\chi_{\sig'}=\chi_{\ell+1}\otimes\chi_{\ell+3}\otimes\cdots\otimes\chi_{\tilde{m}}$ when $i=\ell_{1}$ and $\nu$ is inert, and $\chi_{\sig'}=\chi_{\ell+1}\otimes\chi_{\ell+3}\otimes\cdots\otimes\chi_{m-\ell-1}$ when $i=\ell$ and $\nu$ is split, and $\chi_{\sig'}=\chi_{\ell+2}\otimes\cdots\otimes\chi_{m-\ell-2}\otimes\chi_{m-\ell}$.

If $\ell+1<i\leq \tilde{m}$ when $\nu$ is inert or $\ell+2\leq  i\leq m-\ell-2$ when $\nu$ is split, then
$$
\frac{P^{*}(\chi,\mu)}{P^{*}(\ome_{i}\chi,\mu)}=\frac{\zeta(\chi_{\sig},1)}{\zeta((\ome\chi)_{\sig},1)}.
$$
\end{cor}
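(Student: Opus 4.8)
The corollary concerns the normalized numerator $P^*(\chi,\mu) = \zeta(\chi,1)T(\mu,\chi_\sig)/P(\chi,\mu)$, and the strategy is to track how each factor behaves under the simple reflections $\omega_i$ using the gamma-factor identity $T(I_{\omega}(f_\chi),f_\mu) = \gamma_\omega(\chi,\mu)T(f_\chi,f_\mu)$ together with the explicit formulas of Lemma~\ref{lm:gamma}. The first step is to establish the transformation law for $P(\chi,\mu)$ itself. By the preceding lemma, $\CZ_\nu(s,f_{\chi_s},f_\mu,\psi_{\ell,\kappa}) = T(f_{\chi_s},f_\mu)$, and by Proposition~\ref{pro:Q} we have $\CZ_\nu(s,f_\chi,f_\mu) = P(\chi,\mu)/Q(\chi_s,\mu)$. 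Since the normalized intertwining operator $I_\omega$ sends the spherical vector to the spherical vector up to the standard $c$-function (Gindikin--Karpelevich) factor, and since $\CZ_\nu$ is built from the spherical vector, I would combine the functional equation $T(I_\omega(f_\chi),f_\mu) = \gamma_\omega(\chi,\mu)T(f_\chi,f_\mu)$ with the behaviour of $Q(\chi_s,\mu)$ under $\omega\chi$ to read off $P(\omega\chi,\mu)$ in terms of $P(\chi,\mu)$, $\gamma_\omega$, and the ratio of $Q$'s. The key observation is that $P$ and $Q$ are rational (indeed $P$ polynomial by Proposition~\ref{pro:Q}), so the relation is an identity of rational functions, which can be checked on the generic-spherical locus by the Density Principle and then extends.

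Next I would separate the reflections into the three regimes of Lemma~\ref{lm:gamma}. For $1\le i\le\ell$ (and also $m-\ell\le i\le m-1$ in the split case), the gamma factor is $\gamma_{\omega_i}(\chi,\mu) = (1-\chi_{i+1}\chi_i^{-1}q_E^{-1})/(1-\chi_i\chi_{i+1}^{-1})$, which is exactly the ratio of the corresponding Weyl-denominator factor of $\zeta(\chi,1)$ at $\omega_i\chi$ versus $\chi$, while both $Q(\chi_s,\mu)$ and $T(\mu,\chi_\sig)$ are invariant under $\omega_i$ because $\omega_i$ only permutes the $\GL_{\ell+1}$-block of $\tau$ and does not touch $\chi_\sig$ or the $\mu$-parameters paired against $\chi_\tau$ in $Q$. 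Tracking these cancellations shows $P^*(\omega_i\chi,\mu) = P^*(\chi,\mu)$, the first assertion. For the ``interior'' reflections $\ell+1<i\le\tilde m$ (inert) or $\ell+2\le i\le m-\ell-2$ (split), Lemma~\ref{lm:gamma} reduces $\gamma_{\omega_i}$ to $\gamma_{\omega_i}(\chi_{\ell+1}\otimes\chi_\sig,\mu)$, i.e. to the gamma factor for the group $H_{n-\ell}$ acting on $\chi_\sig$; combining this with the analogous $\ell=0$ relation and the invariance of $Q$ under such $\omega_i$ (since $Q$ depends on $\chi_\tau$ only through the first $j$ coordinates, which are fixed here) gives $P^*(\chi,\mu)/P^*(\omega_i\chi,\mu) = \zeta(\chi_\sig,1)/\zeta((\omega\chi)_\sig,1)$.

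The delicate case is $i=\ell+1$ (inert), or $i=\ell+1$ and $i=m-\ell-1$ (split): here the reflection ``crosses'' between the $\GL$-block and the $H$-block, so $Q(\chi_s,\mu)$ is \emph{not} invariant — the factor $d(\chi_\tau,s)$ and the product over $i'$ in $Q$ change because $\chi_i$ and $\chi_{i+1}$ move in or out of the relevant range. I would use the explicit form of $Q$ from Proposition~\ref{pro:Q}, the definition of $d(\chi_\tau,s)$, and the $\ell=0$ base case \eqref{eq:l=0} $P^*(\chi_1\otimes\chi_\sig,\mu) = \zeta(\chi_\sig,1)/d(\chi_1)$, applied with $\chi_1=\chi_{\ell+1}$ and the $H_{n-\ell}$-parameter set shrunk to $\chi_{\sig'}$ as specified in the statement. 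Matching the $\zeta$ and $d$ factors on both sides yields the claimed ratio $\zeta(\chi_\sig,1)d(\chi_i)/(\zeta(\chi_{\sig'},1)d(\chi_{i+1}))$. The main obstacle I anticipate is bookkeeping: correctly identifying which subset $\chi_{\sig'}$ of the $\chi$-parameters plays the role of the ``reduced'' spherical data for $H$ in each of the inert/split boundary subcases, and verifying that the denominators $Q$, $d$ and the $\zeta$-polynomials factor compatibly so that the quotient collapses exactly to the stated expression; this requires care but no new ideas beyond Lemma~\ref{lm:gamma}, Proposition~\ref{pro:Q}, and \eqref{eq:l=0}.
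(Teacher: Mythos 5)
Your proposal is correct and is essentially the paper's own argument: the proof given there is exactly the ``straightforward calculation by Lemma~\ref{lm:gamma}'' that you outline, namely writing $T(\ome_{i}\chi,\mu)/T(\chi,\mu)=\gamma_{\ome_{i}}(\chi,\mu)/c_{\ome_{i}}(\chi)$ and tracking how $\zeta(\chi,1)$, $Q(\chi,\mu)$, $d(\chi_{\tau})$ and $T(\mu,\chi_{\sig})$ move through the three regimes of reflections, with the boundary case $i=\ell+1$ handled by the explicit form of $Q$ and the $\ell=0$ formula \eqref{eq:l=0}. The only small imprecision is in your first case: for $1\le i\le\ell$ it is the quotient $\gamma_{\ome_{i}}/c_{\ome_{i}}$, not $\gamma_{\ome_{i}}$ itself, that equals $\zeta(\ome_{i}\chi,1)/\zeta(\chi,1)$, but since your setup explicitly carries the Gindikin--Karpelevich $c$-factor this does not affect the conclusion.
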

\begin{proof}
The proof is a straightforward calculation by Lemma~\ref{lm:gamma}.
%
%
\end{proof}

By Corollary~\ref{cor:gamma},
$$
\frac{P^{*}(\chi,\mu)d(\chi_{\tau})}{\zeta(\chi_{\sig},1)}
$$
is invariant under the action of the Weyl group $W(G_{n})$ on $\chi$.
In the rest of this section, we will show that the quotient above is equal to one, i.e.
\begin{equation} \label{eq:P*}
P^{*}(\chi,\mu)=\frac{\zeta(\chi_{\sig},1)}{d(\chi_{\tau})}.
\end{equation}

Let $T_{0}(\chi,\mu)=T(f_{\chi}^{0},f_{\mu})$, where
$$
f_{\chi}^{0}(g)=\int_{B}1_{B(\Fo)\ome_{G_{n}}B(\Fo)}(bg)\chi^{-1}\delta^{\frac{1}{2}}_{B}(b)\ud_{l}b
$$
$\ome_{G_{n}}$ is the longest Weyl element in $G_n$, and $1_{B(\Fo)w_{G_{n}}B(\Fo)}$ is the characteristic function over $B(\Fo)w_{G_{n}}B(\Fo)$
and also is an Iwahori-fixed function.

\begin{lem}
$$
T_{0}(\chi,\mu)=T_{0}(\chi\vert_{H_{n-\ell}},\mu).
$$
\end{lem}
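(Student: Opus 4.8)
The plan is to evaluate $T_{0}(\chi,\mu)=T(f^{0}_{\chi},f_{\mu})$ by unfolding against the explicit shape of the Iwahori--big-cell section. Since $\ome_{G_{n}}$ is the longest Weyl element and $\chi,\delta_{B}$ are unramified, a direct Bruhat computation shows that $f^{0}_{\chi}$ is supported, modulo the left $B$-action, on $B(F)\,\ome_{G_{n}}\,N(\Fo)$, where $N$ is the unipotent radical of $B$, and that on $\ome_{G_{n}}N(\Fo)$ it is constant, equal to $\vol(B(\Fo))$. Hence, whenever $g=\eps_{0,1}\eta nm$ lies in this support, its (unique) Bruhat factorization $g=b(g)\,\ome_{G_{n}}\,u(g)$ with $b(g)\in B(F)$, $u(g)\in N(\Fo)$ satisfies $f^{0}_{\chi}(g)=\vol(B(\Fo))\,\chi\delta_{B}^{1/2}(b(g))$, so that
$$
T_{0}(\chi,\mu)=\vol(B(\Fo))\int \chi\delta_{B}^{1/2}(b(\eps_{0,1}\eta nm))\,f_{\mu}(m)\,\psi_{\ell,\kappa}(n)\,\ud n\,\ud m,
$$
the integral running over the (compact) locus in $R_{\ell,\kappa}$ cut out by the support condition.

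The key point will then be that, on this locus, the torus-part of $b(\eps_{0,1}\eta nm)$ has its $\GL_{\ell}$-coordinates (and its outermost $G_{n}$-coordinate) equal to units, while its remaining coordinates are precisely those of the torus-part $b_{H}(m)$ coming from the big-cell decomposition of $m$ inside $H_{n-\ell}$. To establish this I would use the explicit matrices for $\eps_{0,1}$ and $\eta=\eta_{\eps,\gamma}$ from Section~3 together with \eqref{nell}: the product $\eps_{0,1}\eta$ can be written as a lift of the longest Weyl element $\ome_{H_{n-\ell}}$ of $H_{n-\ell}$ composed with Weyl elements acting only in the $\GL_{\ell}$-directions, so that the Bruhat/Iwahori condition $\eps_{0,1}\eta nm\in B(\Fo)\,\ome_{G_{n}}\,N(\Fo)$ restricts along the slice $\eps_{0,1}\eta\,N_{\ell}\,H_{n-\ell}$ to the analogous condition for $H_{n-\ell}$, with the transverse ($N_{\ell}$ and $\GL_{\ell}$) directions forced to be integral and contributing only units to the torus. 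Granting this, $\chi\delta_{B}^{1/2}(b(\eps_{0,1}\eta nm))$ depends on $\chi$ only through $\chi|_{H_{n-\ell}}$; the $n$-integral collapses to $\vol(N_{\ell}(\Fo))$, since $\psi_{\ell,\kappa}$ is trivial on $N_{\ell}(\Fo)$; and what remains is, by inspection, $T_{0}(\chi|_{H_{n-\ell}},\mu)$. I would run this separately in the inert case (where $W(G_{n})$ is of type $B$) and the split case (type $A$); the two manipulations are parallel and identical in spirit to Appendices~I and~II to \S 5 of \cite{GPSR97}.

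The hard part will be exactly this bookkeeping of the last reduction: one must verify that the big-cell support condition for $G_{n}$ genuinely decouples along the slice, so that the $\GL_{\ell}$ and $N_{\ell}$ directions are entirely absorbed into the unipotent factor $u(g)$ (and a harmless volume) and no residual dependence on $\chi_{1},\dots,\chi_{\ell}$ survives. Once the lemma is in hand it reduces the computation of $P^{*}(\chi,\mu)$, and hence of $P(\chi_{s},\mu)$, for general $\ell$ to the already-settled case $\ell=0$ of \eqref{eq:l=0}, which together with the Weyl-invariance recorded in Corollary~\ref{cor:gamma} yields the identity \eqref{eq:P*} and completes the proof of Theorem~\ref{urmL}.
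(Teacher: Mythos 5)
Your overall route is the same one the paper intends: the paper's proof of this lemma is just a pointer to Proposition 8.1 of \cite{GPSR97}, and that proof is exactly the kind of big-cell support computation you set up. Your first step is fine: since $\chi$ and $\delta_B$ are unramified, $B(\Fo)\ome_{G_n}B(\Fo)=B(\Fo)\ome_{G_n}N(\Fo)$, so $f^0_\chi$ is supported on $B(F)\ome_{G_n}N(\Fo)$ and equals $\vol(B(\Fo))\,\chi\delta_B^{1/2}(b(g))$ there, which turns $T_0(\chi,\mu)$ into an integral of $\chi\delta_B^{1/2}(b(\eps_{0,1}\eta nm))\,f_\mu(m)\,\psi_{\ell,\kappa}(n)$ over the locus cut out by the support condition.

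The problem is that everything after that is asserted rather than proved, and it is precisely where the content of the lemma lies; you yourself label it ``the hard part.'' Concretely, three things need verification and none is addressed. First, that the support condition $\eps_{0,1}\eta nm\in B(F)\ome_{G_n}N(\Fo)$ really forces $n\in N_\ell(\Fo)$ (only then does $\psi_{\ell,\kappa}(n)$ disappear, since $\psi$ is unramified; if non-integral $n$ survived, the character would genuinely enter and the $n$-integral would not be a volume). Second, that the torus part $b(\eps_{0,1}\eta nm)$ has unit entries in the $\GL_\ell$- and $\chi_{\ell+1}$-directions and that its remaining entries coincide with the torus part of the $H_{n-\ell}$-Bruhat decomposition of $m$; the heuristic that $\eps_{0,1}\eta$ is a lift of $\ome_{H_{n-\ell}}$ times $\GL_\ell$-Weyl elements is plausible from the explicit matrices in Section 3, but it is not a proof, and the split case (where the representatives and the Weyl group are of type $A$) needs its own check. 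Third, and most delicate, the modulus characters: even granting the decoupling, $\delta_B^{1/2}$ of $G_n$ restricted to the torus of $H_{n-\ell}$ is not $\delta_{B_H}^{1/2}$, so the surviving integral is a priori $T_0$ for $H_{n-\ell}$ twisted by an unramified character unless the discrepancy is shown to cancel against the Jacobians coming from the change of variables in the $n$- and $m$-integrations. Without these verifications the identity $T_0(\chi,\mu)=T_0(\chi\vert_{H_{n-\ell}},\mu)$ is not established; with them, your argument becomes the proof the paper is referring to.
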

\begin{proof}
The proof is similar to Proposition 8.1 in \cite{GPSR97}.
\end{proof}

By Appendix to \S 6 in \cite{GPSR97}, we have the following expansion,
$$
T(\chi,\mu)=\sum_{\ome\in W(G_{n})}\frac{\gamma_{\ome}(\ome^{-1}\chi,\mu)}{c_{\ome}(\ome^{-1}\chi)}c_{\ome_{G_{n}}}(\ome^{-1}\chi)T_{0}(\ome^{-1}\chi,\mu),
$$
where $c_{\ome}(\ome^{-1}\chi)$ is the Harish-Chandra $c$-function of the intertwining operator associated to the Weyl group element $\ome$.
In this formula, by replacing $\gamma_{\omega}(\ome^{-1}\chi,\mu)$ by the following expression:
$$
\gamma_{\omega}(\ome^{-1}\chi,\mu)=\frac{T(\chi,\mu)c_{\ome}(\ome^{-1}\chi)}{T(\ome^{-1}\chi,\mu)},
$$
canceling both sides the factor $T(\chi,\mu)$, and replacing $T(\ome^{-1}\chi,\mu)$ by
$$
T(\ome^{-1}\chi,\mu)=\frac{P(\ome^{-1}\chi,\mu)}{Q(\ome^{-1}\chi,\mu)},
$$
we obtain the following expression:
\begin{align*}
1=&\sum_{\ome\in W(G_{n})}\frac{Q(\ome^{-1}\chi,\mu)}{P(\ome^{-1}\chi,\mu)}c_{\ome_{G_{n}}}(\ome^{-1}\chi)T_{0}(\ome^{-1}\chi,\mu)\\
=&\sum_{\ome\in W(G_{n})}\frac{c_{\ome_{G_{n}}}(\ome^{-1}\chi)}{\zeta(\ome^{-1}\chi,1)}Q(\ome^{-1}\chi,\mu)P^{*}(\ome^{-1}\chi,\mu)\frac{T_{0}(\ome^{-1}\chi,\mu)}{T(\mu,(\ome^{-1}\chi)_{\sig})}.
\end{align*}

Define
$$
\Delta(\chi)=q^{\apair{\varrho,\chi}}\zeta(\chi,0)=
\prod^{\tilde{m}}_{i=1}\chi_{i}^{-(\frac{m+1}{2}-i)}\zeta(\chi,0),
$$
where $\varrho$ is the half of the sum of all positive roots.
Then it follows that $\Delta(\ome\chi)=\sgn(\ome)\Delta(\chi)$. Note that $c_{\ome_{G_{n}}}(\chi)=\zeta(\chi,1)\zeta^{-1}(\chi,0)$.
It follows that $\Delta(\chi)$ can be expressed as follows:
\begin{align}
&\sum_{\ome\in W(G_{n})}\sgn(\ome)q^{\apair{\varrho,\ome^{-1}\chi}}Q(\ome^{-1}\chi,\mu)P^{*}(\ome^{-1}\chi,\mu)\frac{T_{0}(\ome^{-1}\chi,\mu)}{T(\mu,(\ome^{-1}\chi)_{\sig})} \label{eq:Delta-1}\\
&=\frac{P^{*}(\chi,\mu)d(\chi_{\tau})}{\zeta(\chi_{\sig},1)}\sum_{\ome\in W(G_{n})}\sgn(\ome)q^{\apair{\varrho,\ome\chi}}Q(\ome\chi,\mu)\frac{T_{0}(\ome\chi,\mu)\zeta((\ome\chi)_{\sig},1)}{T(\mu,(\ome\chi)_{\sig})d((\ome\chi)_{\tau})}\nonumber.
\end{align}

In order to prove Equation~\eqref{eq:P*}, it is sufficient to show the following Lemma,
which is similar to the orthogonal case (\cite[Lemma 6.3]{GPSR97}).

\begin{lem}
\begin{equation}
\Delta(\chi)=\sum_{\ome\in W(G_{n})}\sgn(\ome)q^{\apair{\varrho,\ome\chi}}Q(\ome\chi,\mu)\frac{T_{0}((\ome\chi)\vert_{H_{n-\ell}},\mu)\zeta((\ome\chi)_{\sig},1)}{T(\mu,(\ome\chi)_{\sig})d((\ome\chi)_{\tau})}.
\label{eq:Delta-2}
\end{equation}
\end{lem}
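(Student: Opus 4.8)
The plan is to prove the identity \eqref{eq:Delta-2} by a reduction --- which one may organize as an induction on $\ell$ --- to the rank--one case \eqref{eq:l=0}, extending the orthogonal argument of \cite[Lemma 6.3]{GPSR97} and carrying the inert and split places in parallel (the essential structural difference being that $W(G_n)$ is of type $B$ when $\nu$ is inert and of type $A$ when $\nu$ splits). In the base case $\ell=0$ we have $j=1$ and $H_{n-\ell}=H_n$, and \eqref{eq:Delta-2} is equivalent --- by the already established relation \eqref{eq:Delta-1} together with the $W(G_n)$--invariance of $P^{*}(\chi,\mu)d(\chi_\tau)/\zeta(\chi_\sig,1)$ recorded just above --- to the identity \eqref{eq:l=0}, which in turn comes from the inductive formula for rank--one Bessel periods of \cite{Har12} in the Gan--Gross--Prasad setting. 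So the work is the reduction of the general $\ell$ to this base case; once \eqref{eq:Delta-2} is known, \eqref{eq:P*} follows as explained in the text.

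For the reduction I would start from the $c$--function expansion of $T(\chi,\mu)$ recalled from the Appendix to \S 6 of \cite{GPSR97}, which is what produced \eqref{eq:Delta-1}; the task is then the evaluation of the alternating sum $\sum_{\ome\in W(G_n)}\sgn(\ome)\,q^{\apair{\varrho,\ome\chi}}\,Q(\ome\chi,\mu)\,\frac{T_0((\ome\chi)|_{H_{n-\ell}},\mu)\,\zeta((\ome\chi)_\sig,1)}{T(\mu,(\ome\chi)_\sig)\,d((\ome\chi)_\tau)}$. Let $W_J\subseteq W(G_n)$ be the parabolic subgroup generated by the simple reflections $\ome_1,\dots,\ome_\ell$ (and, when $\nu$ splits, also by $\ome_{m-\ell},\dots,\ome_{m-1}$), i.e.\ the Weyl group of the $\GL_{\ell+1}$--block of the Levi. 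Corollary \ref{cor:gamma}, via the gamma factors of Lemma \ref{lm:gamma}, tells us exactly how each summand transforms under the generators of $W_J$, through the rational cocycle $\tfrac{1-\chi_{i+1}\chi_i^{-1}q_E^{-1}}{1-\chi_i\chi_{i+1}^{-1}}$ (equivalently, $P^{*}$ is $W_J$--invariant while $Q$, $d$ and $q^{\apair{\varrho,\cdot}}$ carry only monomial or linear changes). Summing over each $W_J$--coset then produces a Vandermonde--type Weyl denominator for $\GL_{\ell+1}$, which I expect to reproduce precisely the $\GL_{\ell+1}$--part of $\zeta(\chi,0)$ inside $\Delta(\chi)=q^{\apair{\varrho,\chi}}\zeta(\chi,0)$, together with the matching factors of $Q$ built from the $\chi_i\mu_{i'}$'s (and, in the even/inert subcase, the linear factors absorbed into $d(\chi_\tau)$).

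After extracting this $\GL_{\ell+1}$--Weyl denominator, the residual alternating sum runs over coset representatives governed by $W(H_{n-\ell})$ and depends on $\chi$ only through $\chi_\sig$, using the lemma $T_0(\chi,\mu)=T_0(\chi|_{H_{n-\ell}},\mu)$; it has exactly the shape of \eqref{eq:Delta-2} for the pair $(H_{n-\ell},\,\RU(q_{W_j}))$ with a $\GL_1$--block, so \eqref{eq:l=0} applied to that pair evaluates it, and reassembling the pieces yields $\Delta(\chi)$. As a variant of the final bookkeeping, one may observe that both sides of \eqref{eq:Delta-2} are, after reindexing the Weyl sum, anti--invariant under $W(G_n)$ with the same degree bounds in $\chi_1,\dots,\chi_{\tilde m}$, so that it suffices to match the single dominant monomial $\prod_{i}\chi_i^{-(\frac{m+1}{2}-i)}$ of $q^{\apair{\varrho,\chi}}\zeta(\chi,0)$; this turns the problem into computing only the leading term of $Q$, $T_0$ and $T(\mu,\chi_\sig)$.

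The main obstacle will be the combinatorics of the Weyl--coset decomposition and the exact matching of normalizing factors: one must verify that the $W_J$--Vandermonde collapse yields exactly the $\GL_{\ell+1}$--factor of $\zeta(\chi,0)$ with no spurious unit, that the ratios of $d(\cdot)$, $\zeta(\cdot_\sig,1)$ and $T_0/T$ recombine correctly across the inert--split dichotomy and the three subcases $\dim_E V_0\in\{0,1,2\}$ (which alter $\zeta$ by the $(1\pm\chi_i q^{-t})$ factors and toggle the type of $W(G_n)$), and that the gamma factors of Lemma \ref{lm:gamma} are applied with the correct signs $\sgn(\ome)$. Equivalently, in the terms of Theorem \ref{urmL}, the delicate point is tracking through the collapse which denominator factors survive --- namely the Asai $L$--factors $L(2s_i+1,\tau_{i,v},Asai\otimes\xi^{m})$, the Rankin--Selberg factor $L(s+1,\tau_{i,v}\times\sig_v)$, and the cross terms $L(2s+1,\tau_{i,v}\times\tau_{j,v})$ --- and confirming that they assemble exactly as claimed.
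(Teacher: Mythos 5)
Your overall strategy---reduce to the $\ell=0$ identity \eqref{eq:l=0} by splitting the alternating sum over $W(G_n)$ along a parabolic subgroup of the Weyl group, then finish by an antisymmetry argument---is the same as the paper's, but the specific decomposition you choose creates a genuine gap. You collapse over cosets of $W_J\cong W(\GL_{\ell+1})$ (generated by $\ome_1,\dots,\ome_\ell$) and extract a $\GL_{\ell+1}$--Vandermonde first. The paper instead decomposes over $W(\GL_{\ell})\times W(G_{n-\ell})\bks W(G_n)$: the inner $W(\GL_\ell)$--sum involves only $\chi_1,\dots,\chi_\ell$, while the inner $W(G_{n-\ell})$--sum involves $\chi_{\ell+1},\dots,\chi_{\tilde{m}}$ and carries all the factors $Q$, $T_0$, $\zeta(\cdot_\sig,1)$, $T(\mu,\cdot_\sig)$ and the $\chi_{\ell+1}$--part of $d$; that inner sum is exactly of the base-case shape and is evaluated by \eqref{eq:l=0} as $Q_1(\ome\chi,\mu)\,d((\ome\chi)_{\ell+1})\,d((\ome\chi)_\tau)^{-1}\,\Delta_{H_{n-\ell}}(\chi^{(2)})$. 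In your arrangement this step breaks down: the Vandermonde you extract from the $W_J$--collapse depends on $(\ome\chi)_{\ell+1}$ and so is not invariant under the right action of $W(G_{n-\ell})$ on the cosets; hence the residual sum cannot be regrouped into inner sums over $W(G_{n-\ell})$ to which \eqref{eq:l=0} applies. In particular, your claims that the residual sum is ``governed by $W(H_{n-\ell})$'' and ``depends on $\chi$ only through $\chi_\sig$'' are not correct as stated, and the identification of the extracted Vandermonde with ``the $\GL_{\ell+1}$--part of $\zeta(\chi,0)$'' is precisely what remains to be proved, not an input.

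Moreover, even after the base case has been applied, the proof is not finished by reassembling: one is left with $\sum_{\ome\in W(G_n)}\sgn(\ome)\,q^{\apair{\varrho,\ome\chi}}\,Q_1(\ome\chi,\mu)\,d((\ome\chi)_{\ell+1})/d((\ome\chi)_\tau)$, and the paper must still show that, upon expanding $Q_1\,d/d$ into monomials $\prod_{i\le\ell}\chi_i^{n_i}$ with $n_i\in\{0,1,2\}$, every nonconstant monomial contributes zero: two of the exponents $\tfrac{m+1}{2}-i-n_i$ coincide, so a transposition of sign $-1$ stabilizes the monomial and its alternating sum vanishes, leaving only the Weyl-denominator term $\Delta(\chi)$. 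Your proposal omits this step; the ``match the dominant monomial of an anti-invariant'' variant you mention could in principle replace it, but it requires the same combinatorial verification of the exponent pattern (no new dominant exponent, one-dimensionality of anti-invariants within the degree bounds), which you do not carry out. So the plan needs to be rerouted through the $W(\GL_\ell)\times W(G_{n-\ell})$--coset decomposition and completed with the vanishing argument for the leftover monomials, as in the paper.
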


\begin{proof}
We only give a proof for the inert case. For the split case, the proof is similar and we omit details here.

First, by Equation~\eqref{eq:l=0}, this identity holds for $\ell=0$.

Next, we consider  the general cases $\ell>0$. Since the terms $\zeta(\chi_{\sig},1)$, $Q(\chi,\mu)$, $d(\chi_{\tau})$, $T_{0}(\chi\vert_{H_{n-\ell}},\mu)$ and $T(\mu,(\ome\chi)_{\sig})$ are invariant under the action of
the Weyl group $W(\GL_{\ell+1})$, we have the right hand side of the identity,
\begin{align*}
RHS=&\sum_{\ome\in W(GL_{\ell}) \times W(G_{n-\ell})\bks W(G_{n})}\Sigma_{\ome_1}(\ome)
\cdot\Sigma_{\ome_2}(\ome)\cdot
 q^{\apair{\varrho_{U_{\ell}},\ome\chi}}\sgn(\ome).
\end{align*}
where
$$
\Sigma_{\ome_1}(\ome):=\sum_{\ome_1\in W(GL_{\ell})}\sgn(\ome_1)q^{\apair{\varrho_{\GL_{\ell}},\ome_{1}\ome\chi}},
$$
and
\begin{eqnarray*}
\Sigma_{\ome_2}(\ome)
&:=&
\sum_{\ome_2 \in W(G_{n-\ell})} \sgn(\ome_2)q^{\apair{\varrho_{G_{n-\ell}},\ome_{2}\ome\chi}}
Q(\ome_{2}\ome\chi,\mu)\\
&&\ \ \ \ \ \ \ \ \ \ \ \ \cdot \frac{T_{0}((\ome_{2}\ome\chi)\vert_{H_{n-\ell}},\mu)\zeta((\ome_{2}\ome\chi)_{\sig},1)}{T(\mu,(\ome_{2}\ome\chi)_{\sig})d((\ome_{2}\ome\chi)_{\tau})}.
\end{eqnarray*}
Decompose as $Q(\chi,\mu)=Q_{1}(\chi,\mu)Q(\chi_{\ell+1}\otimes\chi_{\sig},\mu)$, where
$$
Q_{1}(\chi,\mu)=\prod^{\ell}_{i=1}\prod^{\tilde{m}_{L}}_{i'=1}(1-q^{-\frac{1}{2}}\chi_{i}\mu_{i'})(1-q^{-\frac{1}{2}}\chi_{i}\mu^{-1}_{i'})
$$
and
$$
Q(\chi_{\ell+1}\otimes\chi_{\sig},\mu)=\prod^{\tilde{m}_{L}}_{i'=1}(1-q^{-\frac{1}{2}}\chi_{\ell+1}\mu_{i'})(1-q^{-\frac{1}{2}}\chi_{\ell+1}\mu^{-1}_{i'}).
$$
Thus, $Q(\ome_{2}\chi,\mu)=Q_{1}(\chi,\mu)Q(\ome_{2}(\chi_{\ell+1}\otimes\chi_{\sig}),\mu)$ for $\ome_{2}\in W(G_{n-\ell})$.

Define $\ome\chi=\chi^{(1)}\otimes\chi^{(2)}$, where $\chi^{(1)}=\ome\chi\vert_{\GL_{\ell}}$ and $\chi^{(2)}=\ome\chi\vert_{G_{n-\ell}}$.
Note that
\begin{eqnarray*}
\apair{\varrho_{H_{n-\ell}},\ome_{2}\ome\chi}&=&\apair{\varrho_{H_{n-\ell}},\ome_{2}\chi^{(2)}},\\ \zeta((\ome_{2}\ome\chi)_{\sig},1)&=&\zeta((\ome_{2}\chi^{(2)})_{\sig},1),
\end{eqnarray*}
and
\begin{align*}
d((\ome_{2}\ome\chi)_{\tau})=&d((\ome_{2}\chi^{(2)})_{\ell+1})\prod^{\ell}_{i=1}(1-q^{-1}\chi_{i}(\varpi_{E}))\\
=&d((\ome_{2}\chi^{(2)})_{\ell+1})d((\ome\chi)_{\tau})d^{-1}((\ome\chi)_{\ell+1}).
\end{align*}
Consider the summation
\begin{align*}
\Sigma_{\ome_2}(\ome)
=&Q_{1}(\ome\chi,\mu)\frac{d((\ome\chi)_{\ell+1})}{d((\ome\chi)_{\tau})}\\
&\cdot \sum_{\ome_2 \in W(G_{n-\ell})}\sgn(\ome_2)q^{\apair{\varrho_{G_{n-\ell}},\ome_{2}\chi^{2}}}Q(\ome_{2}\chi^{(2)},\mu)\\
&\ \ \ \ \ \ \ \ \ \ \ \ \ \cdot\frac{T_{0}(w_{2}\chi^{(2)},\mu)\zeta((\ome_{2}\chi^{(2)})_{\sig},1)}{T(\mu,\ome_{2}\chi^{(2)})d((\ome_{2}\chi^{(2)})_{\ell+1})}\\
=&Q_{1}(\ome\chi,\mu)\frac{d((\ome\chi)_{\ell+1})}{d((\ome\chi)_{\tau})}\cdot \Delta_{H_{n-\ell}}(\chi^{(2)}).
\end{align*}
The last identity holds by the case $\ell=0$. Note that $\chi^{(2)}=\ome\chi\vert_{G_{n-\ell}}$

Now, by replacing $\Sigma_{\ome_2}(\ome)$ by the expression above, the right hand side of (4.18) reduces to
\begin{align*}
RHS=&\sum_{\ome\in W(\GL_{\ell})\times W(G_{n-\ell})\bks W(G_{n})}
\Sigma_{\ome_1}(\ome)\\
&\ \ \ \ \ \ \ \ \ \cdot Q_{1}(\ome\chi,\mu)\frac{d((\ome\chi)_{\ell+1})}{d((\ome\chi)_{\tau})}\cdot \Delta_{H_{n-\ell}}(\chi^{(2)})
\sgn(\ome)q^{\apair{\varrho_{U_{\ell}},\ome\chi}}.
\end{align*}
By using the definition of $\Sigma_{\ome_1}(\ome)$ and the definition of $\Delta_{H_{n-\ell}}(\chi^{(2)})$, and
then by collapsing the three summations $\sum_\ome$, $\sum_{\ome_1}$ and $\sum_{\ome_2}$, we obtain that
\begin{align*}
RHS=&\sum_{\ome\in W(G_{n})}\sgn(\ome)q^{\apair{\varrho,\ome\chi}}Q_{1}(\ome\chi,\mu)\frac{d((\ome\chi)_{\ell+1})}{d((\ome\chi)_{\tau})}.
\end{align*}

Recall that
$$
Q_{1}(\chi,\mu)\frac{d((\chi)_{\ell+1})}{d((\chi)_{\tau})}=\prod^{\ell}_{i=1}\prod^{\tilde{m}_{H}}_{i'=1}(1-q^{-\frac{1}{2}}\chi_{i}\mu_{i'})(1-q^{-\frac{1}{2}}\chi_{i}\mu^{-1}_{i'}).
$$
Then
\begin{align*}
RHS=&\sum_{\ome\in W(G_{n})}\sgn(\ome)q^{\apair{\varrho,\ome\chi}}\\
&+\sum_{\vec{n}}c_{\vec{n}}\sum_{\ome\in W(G_{n})}\sgn(\ome)q^{\apair{\varrho,\ome\chi}}
\prod^{\ell}_{i=1}\chi^{n_{i}}_{i},
\end{align*}
where $\vec{n}=(n_{1},n_{2},\dots,n_{\ell})$ with $n_{i}\in \{0,1,2\}$ such that at least one $n_{i}$ is nonzero,
and $c_{\vec{n}}$ is the coefficient depending only on $\mu$.
Also note that
$$
q^{\apair{\varrho,\chi}}\prod^{\ell}_{i=1}\chi^{n_{i}}_{i}=\prod^{\tilde{m}}_{i=1}\chi^{-(\frac{m+1}{2}-i-n_{i})}_{i},
$$
where $n_{i}=0$ if $i>\ell$.
Thus, it is sufficient to show that
$$
\sum_{\ome\in W(G_{n})}\sgn(\ome)q^{\apair{\varrho,\ome\chi}}
\prod^{\ell}_{i=1}(\ome\chi)^{n_{i}}_{i}=0.
$$

Since $\sum^{\ell}_{i=1}n_{i}\neq 0$, $\ell>0$ and $\tilde{m}-\ell-1\geq1$, there exist at least two distinct integers $i$ and $i'$ with $i<i'$ such that $\frac{m+1}{2}-i-n_{i}=\frac{m+1}{2}-i'-n_{i'}$. Let $i_{0}$ be the maximal integer such that $i_{0}+n_{i_{0}}=i'_{0}+n_{i'_{0}}$. Consider the Weyl group $W(G_{n})$ as the subgroup of the permutation group on $\chi_{i}$ and $\chi^{-1}_{i}$ for $1\leq i\leq \tilde{m}$. Then, define a Weyl element $\ome'$ by the following rules:
$\ome'$ permutes $\chi_{i_{0}}$ and $\chi_{i'_{0}}$, and fixes $\chi_{i}$ for the rest $i$. Hence, $\sgn(\ome')=-1$ and $\ome'$ fixes $\prod^{\tilde{m}}_{i=1}\chi^{-(\frac{m+1}{2}-i-n_{i})}_{i}$.
Let $W(G_{n})_{\vec{n}}$ be the stabilizer of $W(G_{n})$ acting on $q^{\apair{\varrho,\ome\chi}}\prod^{\ell}_{i=1}\chi^{n_{i}}_{i}$.
By $\sgn(\ome')=-1$ and $\ome'\in W(G_{n})_{\vec{n}}$, we have the restriction of $\sgn$ on $W(G_{n})_{\vec{n}}$ is not trivial.

Therefore,
\begin{align*}
&\sum_{\ome\in W(G_{n})}\sgn(\ome)q^{\apair{\varrho,\ome\chi}}\prod^{\ell}_{i}(\ome\chi)^{n_{i}}_{i}\\
=&\sum_{\ome\in W(G_{n})}q^{\apair{\varrho,\ome\chi}}\prod^{\ell}_{i}(\ome\chi)^{n_{i}}_{i}\sum_{\ome'\in W(G_{n})_{\vec{n}}}\sgn(\ome\ome')\\
=&0.
\end{align*}
\end{proof}

Comparing \eqref{eq:Delta-1} and \eqref{eq:Delta-2}, we can get the identity \eqref{eq:P*}. Hence, after replacing back $\chi_s$ for $\chi$,
we obtain the following formulas
\begin{equation}\label{eq:Pi}
P(\chi_s,\mu)=
\frac{d((\chi_s)_{\tau})\zeta((\chi_s)_{\tau},1)}{L(s+1,\tau\times\sig)L(2s+1,\tau, Asai\otimes \xi^{m}_{E/F})}T(\mu,\chi_{\sig})
\end{equation}
if $\nu$ is inert, and
\begin{equation}\label{eq:Ps}
P(\chi_s,\mu)=
\frac{\zeta((\chi_s)_{\tau_{1}},1)\zeta((\chi_s)_{\tau_{2}},1)}{L(s+1,\tau_{1}\times\tilde{\sig})L(s+1,\tau_{2}\times\sig)L(2s+1,\tau_{1}\times\tau_{2})}
\end{equation}
if $\nu$ is split. Note that $(\chi_s)_\tau$ denotes the quasi-character which is the restriction of the quasi-character $\chi_s$ to the
$\tau$-part of the torus.

It is important to point out that from the beginning of this section up to this point,
we assume that $\Pi(\chi_s)$ and $\pi(\mu)$ are generic and spherical. The following theorem extends the above results to general spherical
$\Pi(\chi_s)$ and $\pi(\mu)$.

\begin{thm}
For all choices of $\chi$ and $\mu$, the following identity holds:
\begin{align}
&\CZ_{\nu}(s,f_{\chi},f_{\mu},\psi_{\ell,\kappa})=\\
&\frac{L(s+\frac{1}{2},\tau \times\pi )}{L(s+1,\tau \times\sig )L(2s+1,\tau,Asai\otimes\xi^{m}_{E/F})}\apair{f_{\mu},f_{\sig}}_{\sig}\zeta(\chi_{\tau},1), \nonumber
\end{align}
where $\apair{f_{\mu},f_{\sig}}_{\sig}$ and $\zeta(\chi_{\tau},1)$ are independent of $s$.
Moreover, if we normalize $W_{j}$ so that $W_{j}(f_{\chi})(e)=1$, then
\begin{align}
&\CZ_{\nu}(s,f_{\chi},f_{\mu},\psi_{\ell,\kappa})=\\
&\frac{L(s+\frac{1}{2},\tau \times\pi )}{L(s+1,\tau \times\sig )L(2s+1,\tau,Asai\otimes\xi^{m}_{E/F})}\apair{f_{\mu},f_{\sig}}_{\sig}, \nonumber
\end{align}
\end{thm}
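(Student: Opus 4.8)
The plan is to assemble the two computations carried out in the preceding two subsections. Recall that, by Bernstein's rationality theorem, the unramified local zeta integral is a rational function $\CZ_{\nu}(s,f_{\chi_s},f_{\mu},\psi_{\ell,\kappa})=P(\chi_s,\mu)/Q(\chi_s,\mu)$; that Proposition~\ref{pro:Q} identifies the denominator $Q(\chi_s,\mu)$ with $L(s+\frac{1}{2},\tau\times\pi)^{-1}$ times the explicit factor $d(\chi_\tau,s)$, which is trivial when $\nu$ is split and when $m$ is odd; and that the numerator $P(\chi_s,\mu)$ was pinned down in \eqref{eq:Pi} and \eqref{eq:Ps} under the running hypothesis that $\Pi(\chi_s)$ and $\pi(\mu)$ are \emph{generic and spherical}. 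First I would substitute these formulas into the quotient $P/Q$. In the inert case the factor $d((\chi_s)_\tau)$ appearing in \eqref{eq:Pi} cancels against the $d(\chi_\tau,s)$ coming from $Q$, leaving the product $\zeta(\chi_\tau,1)\,T(\mu,\chi_\sig)\cdot L(s+\frac{1}{2},\tau\times\pi)/\bigl(L(s+1,\tau\times\sig)L(2s+1,\tau,Asai\otimes\xi^{m}_{E/F})\bigr)$, where one also uses that the $\GL_j$-zeta polynomial is insensitive to the $|\det|^s$-twist, so that $\zeta((\chi_s)_\tau,1)=\zeta(\chi_\tau,1)$. In the split case one uses the split definitions $L(s+1,\tau\times\sig)=L(s+1,\tau_1\times\tilde{\sig})L(s+1,\tau_2\times\sig)$, $L(2s+1,\tau,Asai)=L(2s+1,\tau_1\times\tau_2)$, and $\zeta(\chi_\tau,1)=\zeta(\chi_{\tau_1},1)\zeta(\chi_{\tau_2},1)$ to bring $P/Q$ into the same shape. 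Throughout, $T(\mu,\chi_\sig)$ is by construction the pairing $\apair{f_{\mu},f_{\sig}}_{\sig}$ of the Bessel period for the pair $(\pi,\sig)$, and it carries no dependence on $s$; this establishes the first displayed identity when $\Pi(\chi_s)$ and $\pi(\mu)$ are generic and spherical.

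The second step is to drop the genericity hypothesis on $\Pi(\chi_s)$ and $\pi(\mu)$. Here I would invoke the \emph{Density Principle} of Appendix IV to \S5 of \cite{GPSR97}: for each fixed $s$, both sides of the asserted identity are rational functions of the Satake parameters $\chi_\tau$, $\chi_\sig$ and $\mu$ --- the left side by Bernstein rationality, the right side manifestly --- and they have just been shown to agree on the locus of parameters for which $\Pi(\chi_s)$ and $\pi(\mu)$ are generic, which is Zariski dense in the parameter space. Hence the two rational functions coincide identically, so the identity persists for all spherical $\Pi(\chi_s)$ and all spherical $\pi(\mu)$. One needs only to observe that $\apair{f_{\mu},f_{\sig}}_{\sig}$, defined through the uniqueness of the local Bessel models, depends rationally on $\mu$, so that its value on the non-generic locus is the analytic continuation of the generic formula.

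Finally, for the normalized statement, I would note that the factor $\zeta(\chi_\tau,1)$ is precisely the value $W_j(f_{\chi_s})(e)$ of the chosen spherical vector in the Whittaker model of $\tau$: by the Casselman--Shalika formula this value equals the zeta polynomial attached to $\GL_j$, which is independent of $s$. Rescaling $W_j$ so that $W_j(f_{\chi_s})(e)=1$ therefore divides the whole expression by $\zeta(\chi_\tau,1)$, yielding the second formula. I expect the principal obstacle to lie in the first step: keeping precise track of how the various $\zeta$-polynomials and $d$-factors associated with $G_n$, $H_{n-\ell}$, $\GL_j$ (and, when $\nu$ splits, the two copies $\tau_1$ and $\tau_2$) recombine so that $P/Q$ collapses cleanly to the advertised product of tensor product, Rankin--Selberg and Asai $L$-factors. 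Once that accounting is settled, the density argument and the normalization step are comparatively routine.
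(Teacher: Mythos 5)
Your assembly of the computational core is consistent with the paper: dividing \eqref{eq:Pi} (resp.\ \eqref{eq:Ps}) by the $Q(\chi_s,\mu)$ of Proposition~\ref{pro:Q} does make the $d$-factors cancel, the twist-invariance $\zeta((\chi_s)_\tau,1)=\zeta(\chi_\tau,1)$ is correct, and the normalization statement does amount to dividing by the Casselman--Shalika value $W_j(f_\chi)(e)=\zeta(\chi_\tau,1)$. The gap is in your extension step from generic to arbitrary $\chi,\mu$. Saying "both sides are rational in the Satake parameters and agree on the Zariski-dense generic locus, hence agree everywhere" only identifies the rational function $P/Q$ that Bernstein's theorem attaches to the family with the advertised formula; it does not show that, at a non-generic parameter point, the honestly defined unramified zeta integral --- a pairing of $\CJ(f_\chi)$ against a Bessel functional of $\pi(\mu)$ whose existence, uniqueness and spherical normalization are delicate exactly when the inductions are reducible --- is computed by evaluating that rational function there. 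Moreover the right-hand side is not "manifestly" rational: the factor $\apair{f_\mu,f_\sig}_\sig$ (the paper's $T(\mu,\chi_\sig)$) is a spherical Bessel pairing whose rational dependence on $\mu$ you assert but do not prove, and the formulas of Subsection~4.5 that produced $P$ (the functional $T$, the $\gamma_\ome$ functional equations, the Weyl-group expansion of $T$ in terms of $T_0$) are only available in general position.

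The paper closes this gap differently, and this is the substance of its proof: by Proposition~\ref{pro:Q} it suffices to evaluate $\CZ_\nu(s,f_\chi*\Phi_0,f_\mu,\psi_{\ell,\kappa})$; one then constructs an explicit section $f^{*}$ supported on the open double coset $P_j\eps_{0,1}\eta R_\ell(\Fo)$, for which $\CJ(f^{*})(e)=\zeta(\chi_\tau,1)f_\sig$ and hence $\CZ(s,f^{*},f_\mu,\psi_{\ell,\kappa})=\zeta(\chi_\tau,1)\apair{f_\mu,f_\sig}_\sig$ for \emph{every} $\mu$, with no genericity needed; setting $f^{\sharp}=f_\chi*\Phi_0-\frac{d(\chi_\tau)}{L(s+1,\tau\times\sig)L(2s+1,\tau,Asai\otimes\xi^m_{E/F})}f^{*}$, the generic computation gives $\CZ_\nu(s,f^{\sharp},f_\mu,\psi_{\ell,\kappa})=0$ in general position, and the Density Principle (which itself must be extended from \cite{GPSR97} to the unitary case --- a point your proposal does not engage) is applied not to the numerical identity but to the section, yielding $\CJ(f^{\sharp})\equiv 0$. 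Since that is an identity inside the induced module, independent of $\pi$ and of any choice or normalization of Bessel functional, pairing against any such functional gives the theorem for all $\chi$, $\mu$ and $s$ simultaneously, with the same $\apair{f_\mu,f_\sig}_\sig$ appearing on both sides so that its rationality is never needed. To repair your argument you would either have to reproduce this $f^{*}$/support-ideal device or independently establish that the normalized Bessel pairings form a rational family through the non-generic locus; as written, the concluding density step does not suffice.
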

\begin{proof}
This proof is similar to Theorem 5.2 in \cite{GPSR97}.
By Proposition~\ref{pro:Q}, it is sufficient to show that
$$
\CZ_{\nu}(s,f_{\chi}*\Phi_{0},f_{\mu},\psi_{\ell,\kappa})=
P(\chi,\mu)
$$
holds for all choices of $\chi$ and $\mu$.

Define
$$
f^{*}\ppair{\begin{pmatrix} g&&\\&h&\\&&g^{*}\end{pmatrix}u \eps_{0,1}\eta n k}
=\tau(g)f_{\tau}|\det g|^{s}_{E}\delta_{P_{j}}^{\frac{1}{2}}\sig(h)f_{\sig},
$$
where $g\in\Res_{E/F}(\GL_{j})$, $h\in G_{n-j}$, $u\in U_{j}$, $n\in U_{\ell}(\Fo)$ and $k\in K_{H}$.
Recall that $f_{\tau}$ and $f_{\sig}$ are the unramified spherical vectors in $\tau$ and $\sig$.
In addition, we assume that ${\rm supp}(f^{*})=P_{j} \eps_{0,1}\eta R_{\ell}(\Fo)$.
Then $f^{*}$ is in $\Lam$ and ${\rm supp}(f^{*})\subseteq G_{n-j}K_{H}$.
Since $\CJ(f^{*})(e)=W_{j}(f^{*})(\eps_{0,1}\eta)=\zeta(\chi_{\tau},1)f_{\sig}$, we obtain
$$
\CZ(s,f^{*},f_{\mu},\psi_{\ell,\kappa})=\zeta(\chi_{\tau},1)\apair{f_{\mu},f_{\sig}}_{\sig}.
$$

Define
$$f^{\sharp}=f_{\chi}*\Phi_{0}-\frac{d(\chi_{\tau})}{L(s+1,\tau_{\nu}\times\sig_{\nu})L(2s+1,\tau_{\nu},Asai\otimes\xi^{m}_{E/F})}f^{*}.$$
By~\eqref{eq:Pi} and \eqref{eq:Ps}, if $\chi$ and $\mu$ are in general position and $s$ is in a dense open set, then
$$
\CZ_{\nu}(s,f^{\sharp},f_{\mu},\psi_{\ell,\kappa})=0.
$$
By the same argument, one can extend the {\it Density Principle} in Appendix IV to \cite[Section 5]{GPSR97} to the unitary group case, which
implies that
$
\CJ(f^{\sharp})(g)=0
$
for all choices of $\chi$, $\mu$ and $s$. Therefore, we obtain the following identity
$$
\CZ_{\nu}(s,f_{\chi}*\Phi_{0},f_{\mu},\psi_{\ell,\kappa})
=\CZ_{\nu}(s,f^{*},f_{\mu},\psi_{\ell,\kappa})=P(\chi,\mu),
$$
for all choices of $\chi$, $\mu$ and $s$.
\end{proof}

This completes the proof of Theorem 3.9, which is the key result for unramified local zeta integrals. With Theorems 3.8 and 4.11, we have the
following main global result of this paper for $j=\ell+1$. In this case, $(H_{n-j+1},G_{n-j})$ is a spherical pair, and
the Bessel period $\CP^{\psi^{-1}_{\beta-1,y_{-\kappa}}}(\varphi_\pi,\varphi_\sig)$ reduces to a spherical Bessel period.

\begin{thm}[Main]
Assume that $j=\ell+1$. Let $E(\phi_{\tau\otimes\sig},s)$ be the Eisenstein series on $G_n(\BA)$ as in
\eqref{es} and let $\pi$ be an irreducible cuspidal automorphic representation of
$H_{n-\ell}(\BA)$. Assume that the real part of $s$, $\Re(s)$, is large, and that $\pi$ and $\sig$ have a non-zero spherical
Bessel period. Then the global zeta integral
$\CZ(s,\phi_{\tau\otimes\sig},\varphi_{\pi},\psi_{\ell,w_{0}})$ is eulerian, and is equal to
$$
c_{\pi,\sig}\CZ_S(s,\phi_{\tau\otimes\sig},\varphi_{\pi},\psi_{\ell,w_{0}})\frac{L^S(s+\frac{1}{2},\pi \times\tau )}{L^S(s+1,\sig \times\tau)
L^S(2s+1,\tau,Asai\otimes\xi^{m}_{E/F})},
$$
where $c_{\pi,\sig}$ is a constant depending on the Bessel period of $\pi$ and $\sig$ and on other normalization constants, but
independent of $s$, and
$$
\CZ_S(s,\phi_{\tau\otimes\sig},\varphi_{\pi},\psi_{\ell,w_{0}})
=
\prod_{v\in S}\CZ_v(s,\phi_{\tau\otimes\sig},\varphi_{\pi},\psi_{\ell,w_{0}})
$$
is the finite product of ramified local zeta integrals.
\end{thm}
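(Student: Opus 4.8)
The plan is to obtain the Main theorem by assembling the two principal results already established --- the eulerian factorization of Theorem~\ref{thm:j>l} and the unramified evaluation of Theorem~\ref{urmL}, the latter completed above by Theorem~4.11 --- and then by collecting the contributions of the finitely many ramified places and controlling the resulting global constant. First I would invoke Theorem~\ref{thm:j>l}: for $\Re(s)$ large and under the hypothesis that $\pi$ and $\sig$ carry a nonzero Bessel period, the global zeta integral $\CZ(s,\phi_{\tau\otimes\sig},\varphi_\pi,\psi_{\ell,w_{0}})$ is eulerian, equal to $\prod_v\CZ_v(s,\phi_{\tau\otimes\sig},\varphi_\pi,\psi_{\ell,w_{0}})$ with the local factors given by the pairings in \eqref{lzi}. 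Since we assume $j=\ell+1$, we have $\beta=j-\ell=1$, so $\beta-1=0$ and the Bessel functional $\CB^{\psi^{-1}_{0,y_{-\kappa}}}$ is the corank-one period attached to the Gelfand pair $(H_{n-j+1},G_{n-j})$; hence the hypothesis that $\pi$ and $\sig$ have a nonzero \emph{spherical} Bessel period is exactly what allows the use of Theorem~\ref{thm:j>l} and guarantees that the Euler product is not identically zero. Because this factorization rests on the local uniqueness of Bessel models, it produces a single scalar comparing the global Bessel functional to the product of the local ones, which --- together with the chosen local normalizations --- will be the constant $c_{\pi,\sig}$; it is independent of $s$, since for $\Re(s)$ large all the integrals converge absolutely, the functions $\CJ_{\ell,\kappa}(\phi_{\tau\otimes\sig,v}^{Z_{j,\kappa}})$ depend holomorphically on $s$, and the Bessel functionals carry no $s$-dependence.

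Next I would split the Euler product into the product over a finite set $S$ of places of $F$ --- containing the archimedean places and all places at which $\pi$, $\tau$, $\sig$, $\psi$ or the fixed measures are ramified (enlarging $S$ if necessary so that the relevant unramified local spherical Bessel pairings are nonzero, which is possible by the standing hypothesis) --- and the product over the remaining places $v\notin S$. For $v\notin S$ all data are unramified, and Theorem~\ref{urmL}, equivalently the normalized formula of Theorem~4.11 (choosing at each such place the local Whittaker and Bessel data so that $W_j(f_{\chi_s})(e)=1$ and $\apair{f_{\mu,v},f_{\sig,v}}_{\sig_v}=1$), gives
\[
\CZ_v(s,\cdot)=\frac{L(s+\tfrac12,\pi_v\times\tau_v)}{L(s+1,\sig_v\times\tau_v)\,L(2s+1,\tau_v,Asai\otimes\xi^m_{E/F})}.
\]
Writing $\tau=\tau_1\boxplus\cdots\boxplus\tau_r$, the Rankin--Selberg factors split as $L(s+\tfrac12,\pi_v\times\tau_v)=\prod_i L(s+\tfrac12,\pi_v\times\tau_{i,v})$ and $L(s+1,\sig_v\times\tau_v)=\prod_i L(s+1,\sig_v\times\tau_{i,v})$, while the Asai factor of the isobaric sum obeys $L(2s+1,\tau_v,Asai\otimes\xi^m_{E/F})=\prod_i L(2s+1,\tau_{i,v},Asai\otimes\xi^m_{E/F})\cdot\prod_{i<i'}L(2s+1,\tau_{i,v}\times\tau_{i',v})$, which reconciles the display above with the product of $L$-factors appearing in the statement of Theorem~\ref{urmL}. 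Taking the product over $v\notin S$ and invoking the definition of the partial $L$-functions then yields
\[
\prod_{v\notin S}\CZ_v(s,\cdot)=\frac{L^S(s+\tfrac12,\pi\times\tau)}{L^S(s+1,\sig\times\tau)\,L^S(2s+1,\tau,Asai\otimes\xi^m_{E/F})},
\]
the infinite product converging absolutely for $\Re(s)$ large by the standard estimates on the Satake parameters of cuspidal representations of general linear groups and of $G_n$. Collecting the finitely many remaining places into $\CZ_S(s,\cdot)=\prod_{v\in S}\CZ_v(s,\cdot)$ and reinstating the constant $c_{\pi,\sig}$ arising from the rescalings then gives the asserted identity.

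The step that requires the most care --- and really the only obstacle beyond this assembly --- is the bookkeeping of $c_{\pi,\sig}$: one must check that the normalizations at the unramified places can be fixed simultaneously so that each unramified local zeta integral is precisely the bare ratio of local $L$-factors with no spurious factor, and that the net scalar relating the global integral to the product over $S$ times the partial $L$-function ratio is finite and genuinely independent of $s$. Both points follow from the absolute convergence of the global and the local integrals for $\Re(s)$ large --- which is what legitimizes the factorization of the global Bessel functional into local ones --- together with the $s$-independence of the Bessel models and of the leftover local normalization constants, and the absolute convergence for $\Re(s)$ large of the product of those constants. I would conclude by recording that, $(H_{n-j+1},G_{n-j})$ being a Gelfand pair, $c_{\pi,\sig}$ is, up to the fixed local normalization factors, essentially the value of the spherical Bessel period $\CP^{\psi^{-1}_{0,y_{-\kappa}}}(\varphi_\pi,\varphi_\sig)$ of $\pi$ and $\sig$, whose nonvanishing is exactly the standing hypothesis.
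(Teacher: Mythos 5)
Your proposal is correct and takes essentially the same route as the paper, which obtains Theorem 4.12 precisely by combining Theorem 3.8 (the eulerian factorization via uniqueness of local Bessel models) with the normalized unramified evaluation of Theorems 3.9/4.11 for $j=\ell+1$. The additional bookkeeping you supply --- splitting off the finite set $S$, reconciling the isobaric factorization of the Asai and Rankin--Selberg factors, and checking that $c_{\pi,\sig}$ is finite and independent of $s$ --- is exactly the routine assembly the paper leaves implicit.
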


It is clear that Theorem 4.12 extends the main result of \cite{GPSR97} to the generality considered in this paper. 
Note that when $\pi$ is an irreducible cuspidal automorphic representation of $\SO_{2(n-\ell)+1}(\BA)$, one has to replace the 
complex representation $Asai\otimes\xi_{E/F}^m$ by the corresponding exterior square representation $\wedge^2$; and when 
$\pi$ is an irreducible cuspidal automorphic representation of $\SO_{2(n-\ell)}(\BA)$, one has to replace the
complex representation $Asai\otimes\xi_{E/F}^m$ by the corresponding symmetric square representation $\Sym^2$. 

There is a standard method to prove from this global identity that the partial $L$-functions $L^S(s+\frac{1}{2},\pi \times\tau )$ has
meromorphic continuation to the whole complex plane. It is more important to develop the local theory which extends the
partial $L$-function to the complete $L$-function in this setting and hence to prove the functional equation and other analytic properties of
the complete $L$-functions of this type. This is our on-going project and will be reported in our future work.

\section{Final Remark}

We remark that the proof of Theorem  works for replacing the single variable $s$ by a multi-variable $(s_1,s_2,\cdots,s_r)$, and hence
the resulting global zeta integral represents the following product of tensor product $L$-functions
$$
L^S(s_1,\pi\times\tau_1)L^S(s_2,\pi\times\tau_2)\cdots L^S(s_r,\pi\times\tau_r).
$$
We will come back to this in our future work.

\end{document}